\documentclass[a4paper,12pt]{article}
\usepackage{amssymb}
\usepackage{amsthm}
\usepackage{tikz,url}
\usepackage{tikz,pgfplots}
\usetikzlibrary{decorations.markings}
\usepackage{amsmath,amssymb}
\usepackage{todonotes}

\usepackage{color}
\usepackage{graphicx}
\usepackage{placeins,caption}

\DeclareMathOperator{\gp}{gp}
\DeclareMathOperator{\diam}{diam}

\DeclareMathOperator{\z}{z}

\DeclareMathOperator{\cp}{\,\square\,}

\DeclareMathOperator{\mono}{mp}
\DeclareMathOperator{\igp}{gp_i}
\DeclareMathOperator{\imp}{mp_i}
\DeclareMathOperator{\ext}{Ext}

\usepackage[top=3cm,bottom=3cm,left=2.8cm,right=2.8cm]{geometry}
\setlength{\marginparwidth}{3cm}
\setlength{\parskip}{1ex}

\newtheorem{theorem}{Theorem}[section]
\newtheorem{lemma}[theorem]{Lemma}
\newtheorem{convention}[theorem]{Convention}
\newtheorem{corollary}[theorem]{Corollary}
\newtheorem{proposition}[theorem]{Proposition}

\newtheorem{conjecture}[theorem]{Conjecture}
\newtheorem{problem}[theorem]{Problem}

\theoremstyle{definition}

\newtheorem{definition}[theorem]{Definition}

\begin{document}
	
	\title{Colouring a graph with position sets}
	\author{Ullas Chandran S.V.$^{a}$ \\ \texttt{\footnotesize svuc.math@gmail.com}
		\and 
		Gabriele {Di Stefano}$^{b}$  \\ \texttt{\footnotesize gabriele.distefano@univaq.it}
		\and 
		Haritha S.$^{a}$\\ \texttt{\footnotesize harithasreelatha1994@gmail.com}
		\and
		Elias John Thomas$^{c,d}$ \\ \texttt{\footnotesize eliasjohnkalarickal@gmail.com}.
		\and 
		James Tuite$^{e,f}$\\ \texttt{\footnotesize james.t.tuite@open.ac.uk} 
	}

	\maketitle		
	{\centering\footnotesize The authors would like to dedicate this paper to the memory of the eminent Indian graph theorist Prof. E. Sampathkumar, who passed away 11th Aug 2024. He suggested the problem of general position colouring to Elias Thomas. We hope that he would have liked this paper.   \par}
		\noindent
		$^{a}$  Department of Mathematics, Mahatma Gandhi College, University of Kerala, \newline Thiruvananthapuram-695004, Kerala, India \\
		$^{b}$  Department of Information Engineering, Computer Science and Mathematics, \newline University of L'Aquila,  Italy\\
		$^{c}$  Department of Mathematics, Mar Ivanios College, University of Kerala, \newline Thiruvananthapuram-695015, Kerala, India \\
		$^{d}$  Department of Mathematics, Greenshaw High School, Grennell Road, Sutton, UK\\
		$^{e}$ School of Mathematics and Statistics, Open University, Milton Keynes, UK\\
		$^{f}$ Department of Informatics and Statistics, Klaip\.{e}da University, Lithuania
	
	\begin{abstract}
		In this paper we consider a colouring version of the general position problem. The \emph{$\gp $-chromatic number} is the smallest number of colours needed to colour the vertices of the graph such that each colour class has the no-three-in-line property. We determine bounds on this colouring number in terms of the diameter, general position number, size, chromatic number, cochromatic number and total domination number and prove realisation results. We also determine the $\gp $-chromatic number of several graph classes, including Kneser graphs $K(n,2)$, line graphs of complete graphs, complete multipartite graphs, block graphs and Cartesian products. Finally, we show that the $\gp $-colouring problem is NP-complete.
	\end{abstract}
	
	\noindent
	{\bf Keywords:}
	general position set, monophonic position set, graph colouring, Cartesian product, complexity

	\medskip\noindent
	{\bf AMS Subj.\ Class.\ (2020)}: 05C12, 05C15, 05C69
	
	\section{Introduction}
	For any special property of a vertex subset of a graph $G$, it is interesting to consider partitions of the vertex set of $G$ such that each part of the partition has that property, or, equivalently, colourings of $V(G)$ such that each colour class has the given property. The most basic example is the property of being an independent set, in which case the associated colourings are \emph{proper} and the smallest possible number of colours in such a colouring is the \emph{chromatic number} of the graph. A dual concept is \emph{clique partitions}, which is equivalent to a proper colouring of the complement of the graph. Combining both concepts, a colouring is \emph{cochromatic} if each colour class induces either an independent set or a clique; see~\cite{Gim} for a discussion of this problem.
	
	Another example comes from domination theory; the \emph{domatic number} of $G$, introduced in~\cite{CH}, is the largest possible number of colours in a colouring of $V(G)$ such that each colour class is a dominating set. Other variations involve stronger distance conditions on the colour classes than independence; a \emph{2-distance colouring} requires two vertices in the same colour class to be at distance at least two (see~\cite{KraKra,KraKra2}), whilst \emph{injective colourings} are colourings in which each colour class is an \emph{open packing}, i.e. if $u,v$ are vertices with the same colour, then $u$ and $v$ have no common neighbours (see~\cite{BreSamYer,HahnKratSirSot}). 
	
	In this paper we consider a related colouring problem inspired by the \emph{general position problem} for graphs. This problem originated in a chessboard no-three-in-line puzzle of Dudeney~\cite{dudeney-1917}: how many pawns can be placed on an $n \times n$ chessboard such that no three pawns lie on a straight line in the plane? This question was generalised to graphs independently in~\cite{ullas-2016} and~\cite{manuel-2018a} as follows: given a graph $G$, what is the largest number of vertices in a subset $S \subseteq V(G)$ such that no three vertices in $S$ lie on a common shortest path of $G$? 
	
	Colourings and partitions related to no-three-in-line problems have been studied in the literature before. In~\cite{wood} Wood considers colourings of the $n \times n$ grid in which each colour class has Dudeney's geometrical no-three-in-line property. In finite geometry, a \emph{capset} is a set of points, no three of which lie on the same line; one problem of interest is to partition the vertices of a geometry into disjoint capsets, see for example~\cite{FolKalMcMPelWon}. A very recent paper~\cite{mvcoloring} treats a colouring problem for a related position parameter, the mutual-visibility number, which was introduced in~\cite{DiStefano}. 
	
	In this article we explore colouring problems for graphs in which each colour class is required to be in general position.	The plan of this paper is as follows. Section~\ref{sec:preliminary} defines basic graph theory terminology and poses the colouring problems that we will be investigating. Section~\ref{sec:bounds} provides some elementary bounds on our position colouring numbers and compares colouring numbers for different types of position sets, whilst Section~\ref{sec:graph parameters} explores connections with other well-known types of colourings and graph domination. In Section~\ref{sec:classes} we obtain the $\gp $-colouring numbers of some common graph classes that are of interest in the general position problem and Section~\ref{sec:small&large} discusses graphs with very large or small position colouring numbers. In Section~\ref{sec:cartesian} we investigate $\gp $-colourings of Cartesian products and strong grids. It is proven in Section~\ref{sec:complexity} that the decision version of the $\gp $-colouring problem is NP-complete. We close in Section~\ref{sec:conclusion} with some open problems.
	
	\section{Preliminaries}~\label{sec:preliminary}
	
	All graphs considered in this paper are finite, simple and undirected. We will denote the vertex set of a graph $G$ by $V(G)$ and the edge set by $E(G)$, and we will indicate that two vertices $u$ and $v$ are adjacent by writing $u \sim v$. The \emph{neighbourhood} $N(u)$ of a vertex $u$ is the set $\{ v \in V(G): u \sim v\} $. The \emph{degree} of a vertex $u$ is $\deg (u) = |N(u)|$. A vertex of degree one is a \emph{leaf} and a neighbour of a leaf is a \emph{support vertex}. A \emph{subgraph} of $G$ is a graph $H$ such that $V(H) \subseteq V(G)$ and $E(H) \subseteq E(G)$. When referring to neighbourhoods, degrees, etc.\ in a subgraph, we will add a subscript to make clear which graph is being referred to, e.g.\ $N_H(u)$ and $\deg _H(u)$. A subgraph $H$ of $G$ is \emph{induced} if for any pair $u,v \in V(H)$ we have $u \sim v$ in $H$ if and only if $u \sim v$ in $G$.
	
	We will write $[n] = \{ 1,2,\dots ,n\} $. A \emph{path} $P_n$ is the graph with vertex set $[n]$ such that $i \sim i+1$ for $1 \leq i \leq n-1$ and the \emph{length} of this path is $n-1$; a $u,v$-path in a graph $G$, $u,v \in V(G)$, is a subgraph isomorphic to a path with initial vertex $u$ and terminal vertex $v$. A \emph{cycle} $C_n$ of length $n$ has vertex set $\mathbb{Z}_n$ in which each vertex $i$ is adjacent to $i \pm 1$. The \emph{distance} $d(u,v)$ from $u$ to $v$ is the length of a shortest $u,v$-path. The \emph{diameter} $\diam (G)$ of $G$ is $\max \{ d(u,v):u,v \in V(G)\} $, and the \emph{monophonic diameter} $\diam _m(G)$ is the length of a longest induced path in $G$.  A subgraph $H$ is an \emph{isometric} subgraph of $G$ if $d_H(u,v) = d_G(u,v)$ for all $u,v \in V(H)$.
	
	An \emph{independent set} of $G$ is a set of mutually non-adjacent vertices and a \emph{clique} is a set of mutually adjacent vertices. The number of vertices in a largest independent set and a largest clique are respectively the \emph{independence number} $\alpha (G)$ and the \emph{clique number} $\omega (G)$ of $G$. An \emph{independent union of cliques} in $G$ is a disjoint union of cliques with no edges between distinct cliques in the collection. A vertex is \emph{extreme} if its neighbourhood induces a clique, and the set of all extreme vertices of $G$ will be denoted by $\ext (G)$. The \emph{complement} $\overline{G}$ of a graph $G$ is the graph with vertex set $V(\overline{G}) = V(G)$ in which a pair of vertices $u,v$ is adjacent if and only if they are non-adjacent in $G$. We write the disjoint union of two graphs $G,H$ as $G \dot \cup H$ (and when we write `$G$ is disjoint union of cliques', we allow the possibility that $G$ is a clique). The \emph{join} $G \vee H$ of two graphs $G,H$ is the graph formed from the disjoint union $G \dot \cup H$ by adding all edges between $G$ and $H$.
	
	A \emph{colouring} of $G$ with $k$ colours is a function $\rho :V(G) \rightarrow [k]$, although we will also refer to the colours as red, blue, etc. A colouring is \emph{proper} if no two adjacent vertices are assigned the same colour. The \emph{chromatic number} $\chi (G)$ is the smallest number of colours in a proper colouring of $G$. A \emph{clique covering} is a colouring such that no pair of non-adjacent vertices receives the same colour and the smallest number of colours in a clique covering is the \emph{clique cover number} $\theta (G)$. Observe that $\chi (G) = \theta (\overline{G})$. A \emph{cocolouring} is a colouring of $G$ in which each colour class induces either a clique or an independent set, and the smallest number of colours needed for a cocolouring is the \emph{cochromatic number} $\z (G)$. More information on cocolourings can be found in~\cite{Gim}. 
	
	The \emph{total domination number} $\gamma _t (G)$ is the number of vertices in a smallest subset $S \subset V(G)$ such that every vertex of $G$ is a neighbour of some vertex in $S$. A graph is an \emph{efficient open domination graph} if it contains a subset $D \subseteq V(G)$ such that $V(G) = \bigcup _{u \in D}N(u)$, but $N(u) \cap N(v) = \emptyset $ when $u,v \in D$ and $u \not = v$. A clique of order $n$ with one edge deleted is written $K_n^-$. A \emph{diamond} is copy of $K_4^-$ and we will call a graph $G$ \emph{diamond-free} if it contains no subgraph isomorphic to a diamond. The \emph{Cartesian product} $G \cp H$ of two graphs $G,H$ is the graph with vertex set $V(G) \times V(H)$ in which a vertex $(u,v)$ is adjacent to $(u',v')$ if and only if either i) $u = u'$ and $v \sim v'$ in $H$ or ii) $u \sim u'$ in $G$ and $v = v'$. The \emph{strong product} $G \boxtimes H$ of graphs $G$ and $H$ has vertex set $V(G) \times V(H)$ and has an edge between $(u,v)$ and $(u',v')$ if i) $u = u'$ and $v \sim v'$ in $H$, ii) $u \sim u'$ in $G$ and $v = v'$, or iii) $u \sim u'$ in $G$ and $v \sim v'$ in $H$.

	We now define the vertex subsets that will form the focus of this paper.
	
	\begin{definition}\label{def:gp}
		A subset $S \subseteq V(G)$ is in \emph{general position}, or is a \emph{general position set}, if no shortest path of $G$ contains more than two vertices of $S$. The \emph{general position number} $\gp (G)$ is the number of vertices in a largest general position set of $G$. 
	\end{definition}
	
	Several variants of the general position number have been considered in the literature (see~\cite{survey} for a survey), some of which will be needed in our study. The \emph{monophonic position problem} is defined by replacing `shortest path' in Definition~\ref{def:gp} by `induced path' and was first investigated in~\cite{ThoChaTui}. A set $M$ of vertices of a graph $G$ is in \emph{monophonic position} or is a \emph{monophonic position set} if no induced path of $G$ contains more than two vertices of $M$. The \emph{monophonic position number} $\mono (G)$ (or \emph{$\mono $-number} for short) is the number of vertices in a largest monophonic position set of $G$. 
	
	The \emph{mutual-visibility problem}, defined in~\cite{DiStefano}, is a relaxation of the no-three-in-line property: a set $S \subseteq V(G)$ is \emph{mutually-visible} if for any pair $u,v \in S$ there exists at least one shortest $u,v$-path in $G$ that does not pass through a vertex of $S - \{ u,v\} $. The largest number of vertices in a mutual-visibility set is the \emph{mutual-visibility number} $\mu (G)$. 
	
	An \emph{independent general position set} is a general position set that is also an independent set. Such sets were considered in~\cite{ThoCha}. The \emph{independent general position number} is the number of vertices in a largest independent general position set and we will denote this by $\igp (G)$. Similarly, an \emph{independent monophonic position set} is a monophonic position set that is also an independent set, and the number of vertices in a largest independent monophonic position set of $G$ is the \emph{independent monophonic position number} $\imp (G)$.
	
	Since we will frequently wish to discuss several types of position set at the same time, we adopt the following convention.
	
	\begin{convention}
		If $\pi $ stands for a position type invariant (in this context $\mu (G)$, $\gp (G) $ or $\mono (G)$), then we call a subset of that type a \emph{$\pi $-set}. The corresponding independent $\pi $-number will be denoted by $\pi _i(G)$.
	\end{convention}
	
	It is easy to see that for any connected graph we have $\mono (G) \leq \gp (G) \leq \mu (G)$. The graphs that we consider will not necessarily be connected; in this case, for any of the position type invariants $\pi $ described above, we will adopt the convention that a vertex subset $S$ of a disconnected graph $G$ is a $\pi $-set of $G$ if and only if $S \cap V(H)$ is a $\pi $-set of $H$ for each component $H$ of $G$.  
	
	We may now formally define the colouring problems introduced in this article. 
	
	\begin{definition}\label{def:main definition}
		Let $\pi (G)$ be one of $\gp (G), \mono (G)$ or $\mu (G)$. A \emph{$\pi $-colouring} of a graph $G$ is a colouring of $G$ such that each colour class is a $\pi $-set. The \emph{$\pi $-chromatic number} $\chi _{\pi }(G)$ is the smallest number of colours needed for a $\pi $-colouring of $G$. For independent position sets we define $\pi _i$-colourings and $\chi _{\pi _i}$-chromatic numbers analogously.
	\end{definition}
	
	In particular, the \emph{$\gp $-chromatic number} $\chi _{\gp }(G)$ is the smallest number of colours needed to colour $V(G)$ such that each colour class is a general position set of $G$. Observe that for disconnected graphs, the $\pi $-chromatic number will be the maximum of the $\pi $-chromatic numbers of the components. An example of these concepts can be seen in Figure~\ref{fig:Petersen}, which displays optimal $\gp $-, $\mu $- and $\mono $-colourings of the Petersen graph. A pleasing fact is that the colour classes on the left of Figure~\ref{fig:Petersen} are a largest general position set (red) and lower general position set (blue), i.e. a smallest maximal general position set, see~\cite{lowergp}.
	
	\begin{figure}
		\centering
		\begin{tikzpicture}[x=0.2mm,y=-0.2mm,inner sep=0.2mm,scale=0.9,thick,vertex/.style={circle,draw,minimum size=13,fill=lightgray}]
			\node at (-280,200) [vertex,color=red] (v1) {};
			\node at (-108.8,324.4) [vertex,color=blue] (v2) {};
			\node at (-174.2,525.6) [vertex,color=red] (v3) {};
			\node at (-385.8,525.6) [vertex,color=red] (v4) {};
			\node at (-451.2,324.4) [vertex,color=blue] (v5) {};
			\node at (-280,272) [vertex,color=red] (v6) {};
			\node at (-216.5,467.4) [vertex,color=blue] (v7) {};
			\node at (-382.7,346.6) [vertex,color=red] (v8) {};
			\node at (-177.3,346.6) [vertex,color=red] (v9) {};
			\node at (-343.5,467.4) [vertex,color=blue] (v10) {};

			\node at (180,200) [vertex,color=red] (u1) {};
			\node at (8.8,324.4) [vertex,color=blue] (u2) {};
			\node at (74.2,525.6) [vertex,color=pink] (u3) {};
			\node at (285.8,525.6) [vertex,color=pink] (u4) {};
			\node at (351.2,324.4) [vertex,color=blue] (u5) {};
			\node at (180,272) [vertex,color=blue] (u6) {};
			\node at (116.5,467.4) [vertex,color=red] (u7) {};
			\node at (282.7,346.6) [vertex,color=green] (u8) {};
			\node at (77.3,346.6) [vertex,color=green] (u9) {};
			\node at (243.5,467.4) [vertex,color=red] (u10) {};

			\path
			(v1) edge (v2)
			(v1) edge (v5)
			(v2) edge (v3)
			(v3) edge (v4)
			(v4) edge (v5)
			(v6) edge (v7)
			(v6) edge (v10)
			(v7) edge (v8)
			(v8) edge (v9)
			(v9) edge (v10)
			
			(v2) edge (v9)
			(v1) edge (v6)
			(v3) edge (v7)
			(v4) edge (v10)
			(v5) edge (v8)

			(u1) edge (u2)
			(u1) edge (u5)
			(u2) edge (u3)
			(u3) edge (u4)
			(u4) edge (u5)
			(u6) edge (u7)
			(u6) edge (u10)
			(u7) edge (u8)
			(u8) edge (u9)
			(u9) edge (u10)
			
			(u2) edge (u9)
			(u1) edge (u6)
			(u3) edge (u7)
			(u4) edge (u10)
			(u5) edge (u8)
			;
		\end{tikzpicture}
		\caption{An optimal $\gp $- and $\mu $-colouring (left) and an optimal $\mono $-colouring (right) of the Petersen graph}
		\label{fig:Petersen}
	\end{figure}
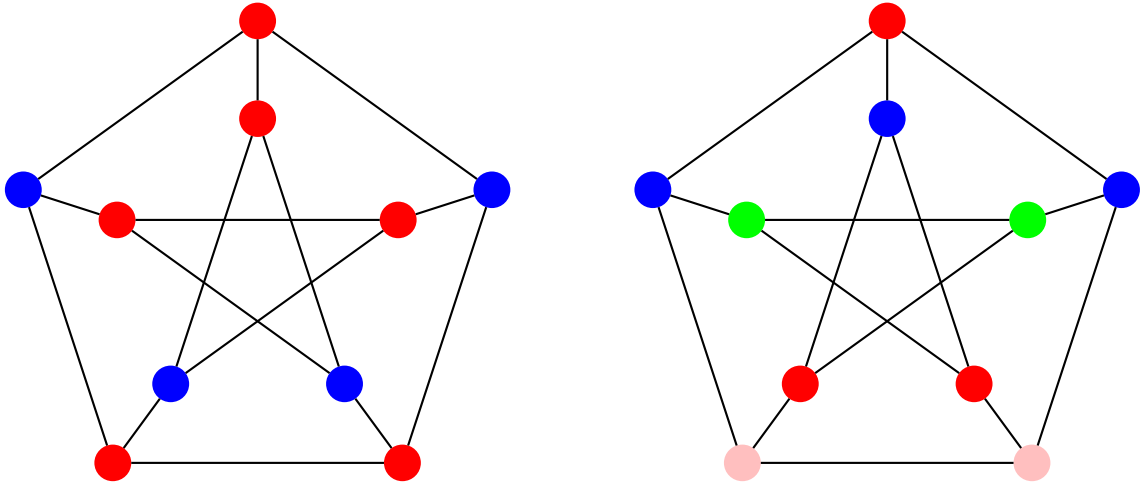

	\section{Elementary bounds on position colouring numbers}\label{sec:bounds}
	In this section we discuss some elementary bounds on position-type colourings. First we note an elementary connection between some of the position chromatic numbers. As any monophonic position set is also a general position set, and any general position set is also a mutual-visibility set, we immediately obtain the following inequality.
	
	\begin{lemma}\label{lem:chi_xi inequality}
		For any graph $G$, \[ \chi _{\mu }(G) \leq \chi _{\gp }(G) \leq \chi _{\mono }(G)\] and \[ \chi _{\mu _i}(G) \leq \chi _{\gp _i}(G) \leq \chi _{\mono _i}(G).\]  When $\pi (G)$ is any of $\gp (G)$, $\mu (G)$ or $\mono (G)$, $\chi _{\pi }(G) \leq \chi _{\pi _i}(G)$.
	\end{lemma}
	
	For any position-type invariant $\pi $ we have the following trivial bound. For mutual-visibility colourings, this result can be found in Lemma 2.1 and Proposition 5.3 of~\cite{mvcoloring}.
	
	\begin{lemma}\label{lem:ni/xi bound}
		If $G$ is a graph with order $n$ and $\pi $ is any position-type invariant, then \[ \left \lceil \frac{n}{\pi (G)}\right \rceil \leq \chi _{\pi }(G) \leq n-\pi (G)+1.\] If $\pi (G)$ is one of $\mu (G)$, $\gp (G)$ or $\mono (G)$, then the upper bound can be improved to
		\[ \chi _{\pi }(G) \leq \left \lceil \frac{n-\pi(G)+2}{2} \right \rceil .\]
	\end{lemma}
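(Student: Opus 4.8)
The plan is to prove the three inequalities in turn. For the lower bound $\lceil \n(G)/\pi(G)\rceil \le \chi_\pi(G)$: in any $\pi$-colouring each colour class is a $\pi$-set and hence has at most $\pi(G)$ vertices, so if $k$ colours are used then $\n(G) = |V(G)| \le k\,\pi(G)$, giving $k \ge \n(G)/\pi(G)$, and since $k$ is an integer $k \ge \lceil \n(G)/\pi(G)\rceil$. This uses only the definition of $\pi(G)$ as the size of a largest $\pi$-set together with the convention for disconnected graphs (each class restricted to a component is a $\pi$-set of that component, so globally has at most $\pi(G)$ vertices).

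For the general upper bound $\chi_\pi(G) \le \n(G) - \pi(G) + 1$: take a largest $\pi$-set $S$, so $|S| = \pi(G)$, colour all of $S$ with colour $1$, and give each of the remaining $\n(G) - \pi(G)$ vertices its own private colour. A singleton is trivially a $\pi$-set (no path contains three vertices of a one-element set), so every colour class is a $\pi$-set and we have used $1 + (\n(G) - \pi(G))$ colours.

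For the improved bound when $\pi \in \{\mu, \gp, \mono\}$: the idea is that for these three invariants, \emph{any} pair of vertices is a $\pi$-set — indeed a two-element set cannot violate the ``no three on a path'' condition (respectively the mutual-visibility condition, which for two vertices just asks for the existence of a shortest path, which always exists). So start again from a largest $\pi$-set $S$ with $|S| = \pi(G)$, colour $S$ with colour $1$, and then pair up the $\n(G) - \pi(G)$ remaining vertices arbitrarily into $\lceil (\n(G)-\pi(G))/2\rceil$ pairs (a leftover vertex forming a singleton class if $\n(G)-\pi(G)$ is odd), assigning each pair its own colour. Each such class has at most two vertices and is therefore a $\pi$-set, so this is a valid $\pi$-colouring using $1 + \lceil (\n(G)-\pi(G))/2\rceil = \lceil (\n(G)-\pi(G)+2)/2\rceil$ colours.

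The only subtlety — and the step I would state most carefully — is the claim that every two-element vertex subset is a $\mu$-, $\gp$-, or $\mono$-set, including in the disconnected case; for $\mu$ one must note that two vertices in different components are vacuously mutually visible, and for $\gp$ and $\mono$ a two-element subset of any component trivially satisfies the no-three-on-a-path condition. This is routine but is the one place where the restriction to these three invariants (as opposed to $\igp$ or $\imp$, where a two-element set could fail to be independent) is actually used. No genuine obstacle is expected.
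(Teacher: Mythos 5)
Your proposal is correct and follows essentially the same route as the paper: the lower bound via the size cap on each colour class, the general upper bound by giving a maximum $\pi$-set one colour and singletons to the rest, and the improved bound by pairing up the leftover vertices since any two vertices form a $\mu$-, $\gp$- or $\mono$-set. Your extra care about the disconnected case and about why the pairing trick fails for $\igp$ and $\imp$ is a reasonable elaboration of what the paper leaves implicit.
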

	\begin{proof}
		Let $\rho $ be a partition of $G$ into $\pi $-sets. Each set in $\rho $ contains at most $\pi (G)$ vertices, so that $\rho $ must have at least $\frac{n}{\pi (G)}$ parts; this establishes the lower bound.
		
		For the upper bound, let $S$ be a largest $\pi $-set of $G$. Create a colouring of $G$ by colouring the vertices of $S$ red, then giving all other vertices different colours. If $\pi (G)$ is one of $\mu (G)$, $\mono (G)$ or $\gp (G)$, then any set of two vertices is in $\pi $-position, so that we can colour the remaining vertices in pairs (with the possible exception of one left-over vertex). This establishes the upper bound.
	\end{proof}
	
	Observe that any graph with $\gp $- or $\mono $-number two will automatically meet the lower bound in Lemma~\ref{lem:ni/xi bound}, since any pair of vertices is in general and monophonic position. The only graphs with general position number two are $C_4$ and paths with order at least two. By contrast, it is shown in~\cite{ThoChaTui} and~\cite{extremal position} that there is a wide variety of graphs with monophonic position number two.
	
	\begin{corollary}\label{cor:paths}
		If $G$ is a path $P_n$, $n \geq 1$, or $C_4$, then $\chi _{\gp }(G) = \left \lceil \frac{n}{2} \right \rceil $. If $G$ has monophonic position number two, then $\chi _{\mono }(G) = \left \lceil \frac{n}{2} \right \rceil$.
	\end{corollary}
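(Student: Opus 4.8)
The plan is to read off both inequalities from Lemma~\ref{lem:ni/xi bound} after pinning down the relevant position numbers. In the general position case, the hypothesis on $G$ forces $\gp(G) = 2$: a path $P_r$ with $r \geq 2$ has general position number $2$, since any three of its vertices already lie on the path itself, which is a shortest path; and $\gp(C_4) = 2$, since any three vertices of $C_4$ include two at distance two together with a third vertex lying on a shortest path joining them. (Alternatively, one may simply quote the classification of graphs with general position number two recalled in the text preceding the corollary.) In the monophonic case, $\mono(G) = 2$ is the hypothesis itself, so nothing needs to be checked.

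With $\pi(G) = 2$ for $\pi \in \{\gp, \mono\}$, the lower bound in Lemma~\ref{lem:ni/xi bound} gives $\chi_{\pi}(G) \geq \lceil \n(G)/\pi(G) \rceil = \lceil \n(G)/2 \rceil$ immediately. For the matching upper bound I would invoke the improved estimate $\chi_{\pi}(G) \leq \lceil (\n(G) - \pi(G) + 2)/2 \rceil$ from the same lemma, which collapses to $\lceil \n(G)/2 \rceil$ on substituting $\pi(G) = 2$; equivalently, since any set of at most two vertices is trivially a general position set and a monophonic position set, one can directly exhibit a $\pi$-colouring with $\lceil \n(G)/2 \rceil$ colours by pairing the vertices of $G$ arbitrarily (leaving one singleton class if $\n(G)$ is odd). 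Putting the two bounds together yields $\chi_{\pi}(G) = \lceil \n(G)/2 \rceil$.

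There is essentially no obstacle here: the corollary is a direct specialisation of Lemma~\ref{lem:ni/xi bound} to the case $\pi(G) = 2$, and the only point requiring a sentence of justification is the value of $\gp(G)$ for paths and for $C_4$, which is routine and in any case already stated in the surrounding discussion. A minor bookkeeping remark is that the upper-bound construction (and the definition of a $\pi$-set) behaves correctly even if $G$ is disconnected, since a pair of vertices lying in distinct components still intersects every component in a set of size at most one; thus the monophonic statement requires no connectivity assumption.
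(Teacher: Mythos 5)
Your proposal is correct and follows exactly the paper's (implicit) argument: the hypotheses give $\pi(G)=2$, the lower bound of Lemma~\ref{lem:ni/xi bound} yields $\lceil \n(G)/2\rceil$, and since any pair of vertices is a $\pi$-set, pairing the vertices attains it. This is precisely the observation the paper makes in the sentence preceding the corollary, so nothing further is needed.
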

	In particular, it follows that for cycles of length at least four, we have $\chi _{\mono }(C_n) = \left \lceil \frac{n}{2} \right \rceil $. The $\gp $-colourings of cycles with length at least five also follows easily from Lemma~\ref{lem:ni/xi bound}.
	
	\begin{corollary}
		For $n \geq 5$, we have $\chi _{\gp }(C_n) = \left \lceil \frac{n}{3} \right \rceil $.
	\end{corollary}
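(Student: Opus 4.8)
The plan is to combine the lower bound of Lemma~\ref{lem:ni/xi bound} with an explicit colouring. First I would record the standard fact that $\gp(C_n) = 3$ for $n \geq 5$. The remark after Lemma~\ref{lem:ni/xi bound} already gives $\gp(C_n) \geq 3$ in this range, and no four vertices of $C_n$ can lie in general position: if $v_1,v_2,v_3,v_4$ occur in this cyclic order with consecutive arc lengths $a,b,c,d$ (so $a+b+c+d = n$), then applying the three-vertex analysis below to each of the four triples obtained by deleting one vertex forces each of $a+b,\ b+c,\ c+d,\ d+a$ to be strictly less than $n/2$, and summing these four inequalities yields the contradiction $2n < 2n$. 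Hence Lemma~\ref{lem:ni/xi bound} gives $\chi _{\gp }(C_n) \geq \lceil n/3 \rceil $.

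For the matching upper bound I would first isolate the following characterisation, writing $V(C_n) = \mathbb{Z}_n$: a three-element set $\{u,v,w\}$ whose vertices split the cycle into arcs of lengths $a,b,c$ (with $a+b+c=n$) is in general position if and only if $\max \{a,b,c\} < n/2$. Indeed, the arc of length $a+b = n-c$ through the vertex lying between the endpoints of the $c$-arc is a shortest path containing all three vertices exactly when $n-c \leq c$, i.e. $c \geq n/2$ (note the boundary case: when $n$ is even and $c = n/2$, this arc is still a shortest path through the third vertex), and symmetrically for the other two arcs.

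Next I would split $\mathbb{Z}_n$ into three consecutive arcs $A_1,A_2,A_3$ of lengths $\ell_1 \leq \ell_2 \leq \ell_3$, chosen as balanced as possible so that $\ell_3 = \lceil n/3 \rceil$ and $\ell_3 - \ell_1 \leq 1$; concretely $A_i = \{ c_{i-1}, c_{i-1}+1, \dots , c_i - 1\}$ with $c_0 = 0$ and $c_i = \ell_1 + \cdots + \ell_i$. For $1 \leq j \leq \ell_3$, assign colour $j$ to the $j$-th vertex of each arc that has one, i.e. to the set $\{ c_{i-1} + j - 1 : 1 \leq i \leq 3,\ j \leq \ell_i\}$. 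Each colour class is a singleton, a pair, or a translate of the base triple $\{0,\ \ell_1,\ \ell_1+\ell_2\}$; in the last case its three arc lengths are exactly $\ell_1, \ell_2, \ell_3$. Since $\lceil n/3 \rceil \leq (n+2)/3 < n/2$ precisely when $n > 4$, we have $\ell_3 < n/2$ for every $n \geq 5$, so by the characterisation above every colour class is in general position. This is a $\gp $-colouring with $\ell_3 = \lceil n/3 \rceil $ colours, giving $\chi _{\gp }(C_n) \leq \lceil n/3 \rceil $ and hence equality.

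The only steps requiring care are the three-vertex general position characterisation — in particular the even-$n$ boundary case where an arc of length exactly $n/2$ still carries a shortest path through the third vertex — and checking that the balanced split genuinely realises $\ell_3 = \lceil n/3 \rceil$ with $\ell_3 - \ell_1 \le 1$; both are routine once set up, and no class is left unbalanced enough to spoil the count.
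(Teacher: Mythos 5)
Your proof is correct and follows essentially the same route as the paper: the lower bound comes from Lemma~\ref{lem:ni/xi bound} via $\gp (C_n)=3$ (which you, unlike the paper, actually verify), and the upper bound from an explicit partition of $\mathbb{Z}_n$ into triples, plus a small remainder, whose three arcs each have length strictly below $n/2$. Your balanced choice $\ell _1\leq \ell _2\leq \ell _3$ with $\ell _3-\ell _1\leq 1$ is in fact more careful than the paper's sets $S_i=\{ i,q+i,2q+i\} $ (arcs $q,q,q+r$): for $n=8$ those give the class $\{0,2,4\}$, whose arc of length $4=n/2$ carries a geodesic through all three vertices, so your version quietly repairs that case while the overall strategy is the same.
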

	\begin{proof}
		Recall that we identify the vertices of the cycle $C_n$ with $\mathbb{Z}_n$. For $n = 5$, we can colour the vertices $0,2,3$ red and the vertices $1,4$ blue, so we may assume that $n\geq 6$. Write $n = 3q+r$, where $0 \leq r \leq 2$ and $q \geq 2$. For $0 \leq i \leq q-1$, let $S_i = \{ i,q+i,2q+i\} $. Each of these sets is in general position and there are $r$ vertices left over that form a general position set. Hence we have a $\gp $-colouring of $C_n$ with $q$ colours when $3|n$ and $q+1$ colours otherwise.
	\end{proof}

	Lemma~\ref{lem:ni/xi bound} immediately leads to a characterisation of graphs with very large or small position colouring numbers.
	
	\begin{lemma}\label{lem:cliques} \text{If $\pi (G)$ is $\gp (G)$, $\mu (G)$ or $\mono (G)$, then }
		\begin{itemize}
			\item $\chi _{\pi }(G) = 1$ if and only if $G$ is a disjoint union of cliques.
			\item $\chi _{\pi _i}(G) = 1$ if and only if $G$ is an empty graph, and $\chi _{\pi _i}(G) = n$ if and only if $G$ is a clique.
		\end{itemize}
	\end{lemma}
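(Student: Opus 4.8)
The plan is to observe that $\chi_{\pi}(G) = 1$ is equivalent to $V(G)$ itself being a $\pi$-set, and that $\chi_{\pi_i}(G) = 1$ is equivalent to $V(G)$ being an \emph{independent} $\pi$-set; the two bullets then follow by unwinding what these conditions mean for $\pi \in \{ \gp, \mono\}$ and, for the value $n$, by one application of Lemma~\ref{lem:ni/xi bound}.

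For the first bullet I would take $\pi = \gp$ first. By the convention for disconnected graphs, $V(G)$ is a general position set if and only if $V(H)$ is a general position set of $H$ for every component $H$ of $G$. Now $V(H)$ is in general position exactly when $H$ has no shortest path on three vertices, that is, when $\diam(H) \le 1$, that is, when $H$ is a clique. Hence $\chi_{\gp}(G) = 1$ iff every component of $G$ is a clique. For $\pi = \mono$ the argument is identical with `shortest path' replaced by `induced path': $V(H)$ is in monophonic position iff $H$ contains no induced path on three vertices, and a graph is $P_3$-free precisely when each of its components is a clique (a connected $P_3$-free graph has diameter at most $1$, since a shortest path is always induced). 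So in both cases $\chi_{\pi}(G) = 1$ iff $G$ is a disjoint union of cliques.

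For the second bullet, $\chi_{\pi_i}(G) = 1$ means $V(G)$ is an independent set that is in general (respectively monophonic) position; but an edgeless vertex set supports no path of positive length, so this is equivalent to $G$ being an empty graph. For the value $n$: applying Lemma~\ref{lem:ni/xi bound} to the position invariant $\pi_i$ gives $\chi_{\pi_i}(G) \le n - \pi_i(G) + 1$, so $\chi_{\pi_i}(G) = n$ forces $\pi_i(G) = 1$. Any two non-adjacent vertices form an independent set of size two, which is trivially in general and in monophonic position, so $\pi_i(G) = 1$ says that $G$ has no non-adjacent pair, i.e.\ $G = K_n$; conversely, if $G = K_n$ then every independent $\pi$-set consists of a single vertex, so $\chi_{\pi_i}(G) = n$.

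Everything here is routine once the definitions are spelled out; the only mild points requiring care are the appeal to the disconnected-graph convention and the standard fact that the $P_3$-free graphs are exactly the disjoint unions of cliques, so I do not anticipate any genuine obstacle.
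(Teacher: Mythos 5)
Your proof is correct, and it fills in exactly the routine verification that the paper omits (the paper states this lemma without proof, as an immediate consequence of the definitions and Lemma~\ref{lem:ni/xi bound}). Your route --- identifying $\chi_{\pi}(G)=1$ with $V(G)$ being a $\pi$-set, invoking the disconnected-graph convention, and using the upper bound of Lemma~\ref{lem:ni/xi bound} for the value $n$ --- is the intended argument.
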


	\begin{lemma}\label{lem:isometric}
		If $H$ is an isometric subgraph of $G$, then $\chi _{\gp }(G) \geq \chi _{\gp }(H)$, and if $H$ is an induced subgraph of $G$, then $\chi _{\mono }(G) \geq \chi _{\mono }(H)$.
	\end{lemma}
	\begin{proof}
		Suppose that $H$ is an isometric subgraph of $G$. If $S$ is any general position set of $G$, then $S \cap V(H)$ is in general position in $H$. It follows, upon taking the intersection of each colour class of an optimal $\gp $-colouring of $G$ with $V(H)$ that $H$ has a $\gp $-colouring with at most $\chi _{\gp }(G)$ colours. The reasoning for $\mono $-colourings and induced subgraphs is similar.
	\end{proof}
	
	Applying Lemma~\ref{lem:isometric} and using Corollary~\ref{cor:paths} gives the following bounds in terms of the diameter and monophonic diameter.
	
	\begin{corollary}\label{lem:diameter bound} 
		For any graph $G$ \[ \chi _{\gp}(G) \geq \left \lceil \frac{\diam^*(G)+1}{2} \right \rceil ,\] where $\diam ^*(G) = \max \{ \diam (W) : W \text{ is a component of } G\} $. The same inequalities hold for $\chi _{\mono }$ if the diameter is replaced by the monophonic diameter $\diam _m(G)$.
	\end{corollary}
	
	It turns out that Corollary~\ref{lem:diameter bound} is tight for the class of block graphs.
	
	\begin{theorem}\label{thm:block graph} 
		For a block graph $G$, $\chi _{\gp}(G)= \left \lceil \frac{\diam(G)+1}{2} \right \rceil $.
	\end{theorem}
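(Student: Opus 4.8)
The plan is to prove equality by establishing the matching upper bound, since Lemma~\ref{lem:diameter bound} already gives $\chi_{\gp}(G) \geq \lceil (\diam(G)+1)/2 \rceil$ for every graph. So the task is to exhibit a $\gp$-colouring of a block graph $G$ using exactly $k := \lceil (\diam(G)+1)/2 \rceil$ colours. The natural idea is a distance-based colouring: fix a suitable root vertex $r$ and colour each vertex $v$ according to the value $\lfloor d(r,v)/1 \rfloor$ grouped into pairs, i.e. $\rho(v) = \lceil (d(r,v)+1)/2 \rceil$ or equivalently partition the BFS layers $L_0, L_1, \dots$ of $G$ from $r$ into consecutive pairs $\{L_0,L_1\}, \{L_2,L_3\}, \dots$, assigning each pair a single colour. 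The key structural facts I would use about block graphs are: every block is a clique; between any two vertices there is a unique shortest path (indeed a unique path, ignoring the trivial back-and-forth); and shortest paths behave very rigidly — if $u,v,w$ lie on a common shortest path with $v$ between $u$ and $w$, then $d(u,w) = d(u,v) + d(v,w)$ and the path is forced.

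**Choosing the root and verifying the colouring**

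First I would choose $r$ to be an endpoint of a diametral path, or more carefully a vertex whose eccentricity is controlled so that every BFS layer index stays in $\{0, 1, \dots, \diam(G)\}$; any vertex works since $\ecc(r) \leq \diam(G)$, but to get the layer count down to exactly $2k$ I may need $r$ to be a peripheral vertex, giving $\ecc(r) = \diam(G)$ when $r$ is chosen as one end of a diametral geodesic is not guaranteed, so instead I would simply use that $\ecc(r) \leq \diam(G)$ for any $r$, hence there are at most $\diam(G)+1$ nonempty layers and at most $\lceil(\diam(G)+1)/2\rceil = k$ colours. Then I must check each colour class $C_j = L_{2j-2} \cup L_{2j-1}$ is a general position set. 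Suppose three vertices $x,y,z \in C_j$ lie on a common shortest path $P$ with $y$ between $x$ and $z$. Since the three $r$-distances $d(r,x), d(r,y), d(r,z)$ all lie in the two-element set $\{2j-2, 2j-1\}$, two of them are equal, say $d(r,x) = d(r,y) = m$ (the other cases are symmetric). The contradiction should come from the tree-like metric structure of block graphs: I would argue that on a shortest path $x \cdots y \cdots z$, the distances to any fixed vertex $r$ cannot have a "plateau" of the form needed — more precisely, in a block graph the function $t \mapsto d(r, P(t))$ along a geodesic $P$ changes by exactly $\pm 1$ at each step except possibly staying constant across at most one edge (the unique edge of $P$ lying in the block "closest" to $r$ along $P$). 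Hence among any three vertices of $P$, at most two can be equidistant from $r$, and if exactly two are, they are adjacent on $P$; but then the third vertex $z$ has $d(r,z) = m \pm 1$, and for $z$ to also lie in $C_j$ we'd need $d(r,z) \in \{2j-2,2j-1\}$, which combined with $x,y$ being the two equidistant-to-$r$ vertices forces $z$ between $x$ and $y$ on $P$ or an immediate neighbour — a short case check rules this out or shows $x,y,z$ cannot all be distinct on one geodesic.

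**The main obstacle**

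I expect the crux to be the precise claim that, along any geodesic in a block graph, the distance from a fixed external vertex $r$ is "almost strictly monotone" — decreasing then increasing with at most one flat step, that flat step occurring at a single block. Proving this cleanly is where block-graph structure must be invoked carefully: one shows that a geodesic $P$ from $x$ to $z$ meets the (tree of) blocks in a predictable way, the vertex of $P$ closest to $r$, call it $c$, is unique or spans a single block-edge, and on either side of $c$ the distance to $r$ is strictly monotone because going one more step away from $c$ along $P$ means entering a new block further from $r$. Once this "unimodal with at most one plateau of length one" lemma is in hand, the general position property of each two-layer colour class is immediate: three vertices on a geodesic have three distinct positions, so their $r$-distances take at least... well, they can repeat at most once and only for adjacent positions, so three distinct geodesic positions give at least three distinct values among $d(r,\cdot)$ unless two adjacent ones coincide, in which case the values span a range of at least $2$ and cannot all lie in a $2$-element interval $\{2j-2, 2j-1\}$ — contradiction. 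I would write the block-graph metric lemma first as a standalone claim, then deduce the theorem in two or three lines.
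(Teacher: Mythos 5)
Your overall plan (lower bound from Lemma~\ref{lem:diameter bound}, then an explicit colouring with $\left\lceil \frac{\diam(G)+1}{2}\right\rceil$ classes) matches the paper, but your colouring is different from the paper's and, unfortunately, it does not work. The flaw is in the last step. Even granting your ``unimodal with at most one short plateau'' lemma for the function $t \mapsto d(r,P(t))$ along a geodesic $P$, unimodality does not force three vertices of $P$ to realise three values spanning a range of at least two: a profile of the form $m+1,\, m,\, m+1$ on three consecutive vertices of $P$ is strictly unimodal with no plateau, yet all three values lie in the two-element set $\{m,m+1\}$. Your assertion that repeated values of $d(r,\cdot)$ can occur ``only for adjacent positions'' is exactly what fails; a unimodal function repeats values symmetrically about its minimum. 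Concretely, take the tree with vertices $r,w,c,a,b$ and edges $rw$, $wc$, $ca$, $cb$ (a $K_{1,3}$ with one edge subdivided). Here $\diam(G)=3$, so the theorem gives $\chi_{\gp}(G)=2$. Rooting at $r$ your classes are $\{r,w\}$ and $\{c,a,b\}$, but $a,c,b$ is a geodesic, so the second class is not in general position; and one checks that every other choice of root also produces a class containing three vertices of a common geodesic. So no choice of root rescues the BFS-pairing scheme, and the approach is not merely incompletely justified but genuinely wrong.

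For contrast, the paper's construction layers the graph from the outside in rather than from a single root: set $G_0=G$, let $V_i=\ext(G_{i-1})$ be the set of extreme (simplicial) vertices of $G_{i-1}$, and let $G_i=G_{i-1}-V_i$. Each $V_i$ is a general position set because if $u,v,w\in V_i$ lay on a common geodesic $P$, then taking the smallest $j$ with $V_j\cap V(P)\neq\emptyset$, any internal vertex $w'$ of $P$ in $V_j$ is simplicial in $G_{j-1}$, which still contains all of $P$; hence the two $P$-neighbours of $w'$ are adjacent, contradicting $P$ being shortest. In the example above this gives the valid classes $\{r,a,b\}$ and $\{w,c\}$. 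If you want to salvage a distance-based colouring, you would need to replace distance from a single root by something like this peeling depth.
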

	\begin{proof}
		Set $s = \left \lceil \frac{\diam(G)+1}{2} \right \rceil $. The lower bound follows from Corollary~\ref{lem:diameter bound}, so we need only produce a $\gp $-colouring of $G$ with $s$ colours. Set $G_0 = G$ and for $1 \leq i \leq s$ we define $V_i = \ext (G_{i-1})$ and $G_i = G_{i-1}-V_i$. Note that $V_s = V(G_{s-1})$, $V(G) = \bigcup _{i=1}^sV_i$ and $G_i$ is an isometric subgraph of $G$ for $0 \leq i \leq s-1$. For $1 \leq i \leq s$, the subset $V_i$ is a general position set of $G_{i-1}$ and hence is in general position in $G$. 
	\end{proof}
	
	Theorem~\ref{thm:block graph} allows us to prove the existence of a graph of order $n$ and $\gp $- or $\mono $-chromatic number $a$ for $n \geq 2a-1$.
	
	\begin{theorem}\label{thm:existence}
		If $\pi (G)$ is $\gp (G)$ or $\mono (G)$, then for $a \geq 1$ there exists a graph $G$ with order $n$ and $\chi _{\pi }(G) = a$ if and only if $n \geq 2a-1$.
	\end{theorem}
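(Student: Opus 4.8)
The plan is to treat the two directions separately. For the \emph{necessity} of the condition $n \ge 2a-1$, suppose $\chi_{\pi}(G) = a$ for a graph $G$ of order $n$. If $n = 1$ then $\pi(G) = 1$ and $a = \chi_{\pi}(G) = 1 = 2a-1$, so assume $n \ge 2$. Then every two-element vertex subset is trivially a $\pi$-set, so $\pi(G) \ge 2$, and the improved upper bound of Lemma~\ref{lem:ni/xi bound} gives $a = \chi_{\pi}(G) \le \left\lceil \frac{n-\pi(G)+2}{2} \right\rceil \le \left\lceil \frac{n}{2} \right\rceil$. A short check on the parity of $n$ finishes this direction: if $n$ is even then $a \le n/2$, so $n \ge 2a$; if $n$ is odd then $a \le (n+1)/2$, so $n \ge 2a-1$. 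The only subtlety is that $\pi(G) \ge 2$ fails when $n = 1$, which is why that case is handled first.

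For \emph{sufficiency}, fix $a \ge 1$ and $n \ge 2a-1$; I will exhibit a graph of order $n$ on which \emph{both} $\chi_{\gp}$ and $\chi_{\mono}$ equal $a$. If $a = 1$, take $G = K_n$, which is a disjoint union of a single clique, so $\chi_{\pi}(G) = 1$ by Lemma~\ref{lem:cliques}. For $a \ge 2$, set $k := n - (2a-1) \ge 0$ and let $T$ be the tree obtained from a path $v_1 v_2 \cdots v_{2a-1}$ by attaching $k$ further leaves to $v_1$, so that $|V(T)| = n$. A direct distance computation gives $\diam(T) = 2a-2$ when $k = 0$ (in which case $T = P_{2a-1}$) and $\diam(T) = 2a-1$ when $k \ge 1$ (the longest path then runs from a leaf at $v_1$ to $v_{2a-1}$, of length $2a-1 \ge 3$); in either case $\left\lceil \frac{\diam(T)+1}{2} \right\rceil = a$. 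Since every tree is a block graph, Theorem~\ref{thm:block graph} yields $\chi_{\gp}(T) = a$.

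It remains to see that $\chi_{\mono}(T) = a$ as well. The clean observation is that in a tree any two vertices are joined by a unique path, which is simultaneously a shortest path and an induced path; hence a vertex set of $T$ is in general position if and only if it is in monophonic position, and therefore $\chi_{\mono}(T) = \chi_{\gp}(T) = a$. (Equivalently, $\diam _m(T) = \diam(T)$, so Lemma~\ref{lem:diameter bound} supplies $\chi_{\mono}(T) \ge a$, while the colouring produced in the proof of Theorem~\ref{thm:block graph} by repeatedly removing extreme vertices is simultaneously a $\mono$-colouring.) I expect the main difficulty to be organisational rather than technical: one must choose a family of examples whose order can be tuned to \emph{every} $n \ge 2a-1$ while keeping the diameter pinned to $2a-2$ or $2a-1$, and the same example must settle the monophonic case at once; the broom above does exactly this, with $K_n$ covering the degenerate value $a = 1$.
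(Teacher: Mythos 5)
Your proof is correct and follows essentially the same route as the paper: necessity from the improved upper bound $\chi_{\pi}(G) \leq \left\lceil \frac{n-\pi(G)+2}{2} \right\rceil$ of Lemma~\ref{lem:ni/xi bound}, and sufficiency via a broom (a path $P_{2a-1}$ with extra leaves attached) handled by Theorem~\ref{thm:block graph}, with the monophonic case following because induced paths in a tree are geodesics. The only cosmetic difference is that you attach the leaves to an end vertex of the path (giving diameter $2a-1$) whereas the paper attaches them to the penultimate vertex (keeping diameter $2a-2$); both choices give $\left\lceil \frac{\diam(T)+1}{2} \right\rceil = a$, and your explicit treatment of the $n=1$ case and the parity check in the necessity direction is if anything slightly more careful than the paper's.
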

	\begin{proof}
		Let $G$ be a graph with $\chi _{\gp }(G) = a$. Then by the upper bound in Lemma~\ref{lem:ni/xi bound} we have $n \geq 2a-1$. If $a = 1$, then a clique of order $n$ will suffice, so suppose that $a \geq 2$. Let $T$ be the tree formed by adding $n-2a+1$ leaves to the vertex $2a-2$ of the path $P_{2a-1}$. $T$ has order $n$ and by Theorem~\ref{thm:block graph} has $\gp $-chromatic number $\chi _{\gp }(T) = a$. As all paths in $T$ are unique, $\chi _{\mono }(T) = \chi _{\gp }(T)$.
	\end{proof}
	
	We now expand on Lemma~\ref{lem:chi_xi inequality} by proving realisation results for $\chi _{\mu }$, $\chi _{\gp }$ and $\chi _{\mono }$.
	\begin{theorem}
		There exists a graph with $\chi _{\gp}(G) = a$ and $\chi _{\mono }(G) = b$ if and only if $a = b = 1$ or $2 \leq a \leq b$.
	\end{theorem}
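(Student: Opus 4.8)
The plan is to prove the realisation statement by establishing the constraints on $(a,b)$ and then constructing, for each admissible pair, a graph achieving both values. The constraints come for free: Lemma~\ref{lem:chi_xi inequality} gives $\chi_{\gp}(G) \leq \chi_{\mono}(G)$, so $a \leq b$; and by Lemma~\ref{lem:cliques}, $\chi_{\gp}(G) = 1$ and $\chi_{\mono}(G) = 1$ hold for exactly the same graphs (disjoint unions of cliques), so we cannot have $a = 1 < b$. This forces $a = b = 1$ or $2 \leq a \leq b$, and for the case $a=b=1$ any clique works. So the substance is the construction in the range $2 \leq a \leq b$.

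For the construction I would build $G$ as a disjoint union (or a suitably ``glued'' combination) of two gadgets: one gadget $G_1$ with $\chi_{\gp}(G_1) = a$ and $\chi_{\mono}(G_1)$ not exceeding $b$, and a second gadget $G_2$ with small $\gp$-chromatic number but $\chi_{\mono}(G_2) = b$, so that the maximum of the two (recall the excerpt's convention that for disconnected graphs each component must be validly coloured, so position-chromatic numbers of a disjoint union are the maxima over components) realises the pair. The natural choice for $G_1$ is a block graph: by Theorem~\ref{thm:block graph}, a path $P_{2a-1}$ has $\chi_{\gp} = a$, and since in a tree every induced path is a geodesic, $\chi_{\mono}(P_{2a-1}) = a \leq b$ as well. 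For $G_2$ I want a graph whose monophonic diameter is large (around $2b-1$ or $2b-2$) while its actual diameter stays bounded, so that $\chi_{\gp}(G_2)$ is controlled by Lemma~\ref{lem:ni/xi bound} while $\chi_{\mono}(G_2) \geq \lceil (\diam_m+1)/2 \rceil = b$ by Lemma~\ref{lem:diameter bound}. The paper has already remarked that there is a wide variety of graphs with monophonic position number two (citing \cite{ThoChaTui} and \cite{extremal position}); a long ``fat'' path or a cycle-like graph with chords giving small geodesic diameter but a long induced path is exactly the kind of gadget needed — I would take a graph $H$ with $\mono(H) = 2$, forcing $\chi_{\mono}(H) = \lceil n(H)/2 \rceil$ by the Corollary, and tune its order so that this equals $b$, while simultaneously arranging $\gp(H)$ large enough (or $n(H)$ small enough) that $\chi_{\gp}(H) \leq b$; in fact if $H$ is chosen with small diameter then $\chi_{\gp}(H)$ is automatically at most $b$ since $a \geq 2$ already and we only need the overall $\gp$-chromatic number to come out to $a$, which it does via the $G_1$ component dominating.

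There is a subtlety I need to watch: taking a disjoint union makes the $\gp$-chromatic number the maximum of the components' values, so I must ensure $\chi_{\gp}(G_2) \leq a$, not merely $\leq b$. If that fails for the first gadget I reach for, the fix is either to shrink $G_2$ (use the smallest graph with $\mono = 2$ and the right order, e.g. a short path or small graph from \cite{extremal position} of order exactly $2b$ or $2b-1$, whose $\gp$-chromatic number is then at most $b$ but I need $\leq a$) or — more robustly — to replace the disjoint union by attaching $G_2$ to $G_1$ at a single vertex in a block-graph-compatible way, or by a construction where the long induced-but-not-geodesic path is built using edges that also shorten geodesics. Concretely, a clean choice is: take the path $P_{2a-1}$, and separately take a graph realising monophonic position number two with monophonic diameter $2b-2$ and ordinary diameter $2$ (such graphs exist by the cited references — e.g. certain graphs built from a large clique with pendant structure), whose $\gp$-chromatic number is then $2 \leq a$; their disjoint union has $\chi_{\gp} = \max(a,2) = a$ and $\chi_{\mono} = \max(a, b) = b$. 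I would verify both equalities carefully, the lower bounds via Lemmas~\ref{lem:diameter bound} and~\ref{lem:ni/xi bound} and the upper bounds via explicit colourings (extreme-vertex peeling for the block-graph part, pairing for the $\mono$-part).

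The main obstacle I anticipate is pinning down a single explicit family that simultaneously has (i) monophonic position number exactly two, (ii) the precise order needed to hit $\chi_{\mono} = b$, and (iii) small enough geodesic diameter that its $\gp$-chromatic number does not exceed $a$; the first two are routine once one has the right family from the literature, but coordinating the third with the disjoint-union semantics — so that the $\gp$-value is genuinely controlled by the path component and not accidentally inflated by the monophonic gadget — is where the care is needed. If no off-the-shelf family fits, the fallback is to do the whole construction inside one graph by subdividing and adding chords so that the geodesic and monophonic structures are decoupled by design.
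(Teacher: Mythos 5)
Your skeleton is right and matches the paper's: the constraints $a\le b$ and ``$a=1\Rightarrow b=1$'' follow from Lemma~\ref{lem:chi_xi inequality} and Lemma~\ref{lem:cliques}, the case $a=b$ is handled by a path, and the case $b>a$ needs a gadget with small $\gp$-chromatic number and large $\mono$-chromatic number. Your disjoint-union packaging is also legitimate given the paper's convention for disconnected graphs, and is if anything cleaner than the paper's connected construction. But the proof has a genuine gap exactly where you flag it: the gadget $G_2$ is never produced, and producing it is the entire content of the theorem. You need a graph of order $2b$ with $\mono(G_2)=2$ (so that $\chi_{\mono}(G_2)=\lceil n/2\rceil=b$) \emph{and} $\chi_{\gp}(G_2)\le 2$, i.e.\ a graph whose vertex set splits into two general position sets while its general position number is at least half the order but its monophonic position number is only two. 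Citing ``a wide variety of graphs with monophonic position number two'' does not give this: nothing in that citation controls $\chi_{\gp}$. Worse, your one concrete candidate --- ``a large clique with pendant structure'' --- is a split graph, and by the paper's own Corollary on split graphs such a graph has $\chi_{\mono}=2$, not $b$; so that family fails requirement (ii), not requirement (iii).

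The gadget that works, and the one the paper uses, is $K(r)$: the complete bipartite graph $K_{r,r}$ minus a perfect matching, $r\ge 3$. It is bipartite of diameter two, so its two partite sets are independent and hence in general position, giving $\chi_{\gp}(K(r))=2$; on the other hand $x_1,y_2,x_3,y_1,x_2$-type induced paths show that any three of its vertices lie on a common induced path, so $\mono(K(r))=2$ and $\chi_{\mono}(K(r))=r$. With this, your plan closes: $P_{2a-1}\cup K(b)$ has $\chi_{\gp}=\max(a,2)=a$ and $\chi_{\mono}=\max(a,b)=b$ for $2\le a< b$. The paper instead attaches a pendant path of controlled length to $K(r)$ to get a single connected graph $H(r,s)$ and computes both invariants on it, which costs a more delicate case analysis ($r\le s$ versus $r>s$) but yields connected witnesses. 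So: right architecture, but the load-bearing construction is missing and the proposed substitute would not work.
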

	\begin{proof}
		Suppose that there is a graph $G$ with $\chi _{\gp}(G) = a$ and $\chi _{\mono }(G) = b$. By Lemma~\ref{lem:chi_xi inequality} we must have $a \leq b$. If $a = 1$, then by Lemma~\ref{lem:cliques} $G$ is a clique and we must also have $b = 1$, so assume that $a \geq 2$. By Corollary~\ref{cor:paths} we have $\chi _{\gp }(P_{2a-1}) = \chi_{\mono}(P_{2a-1}) = a$, so we can further assume that $b > a$.
		
		For $r \geq 3$, form the crown graph $K(r)$ from the complete bipartite graph $K_{r,r}$ with partite sets $X = \{x_1,\dots,x_r\} $ and $Y = \{ y_1,\dots,y_r\} $ by deleting the edge $x_iy_i$ for $1 \leq i\ \leq r$. Now for $s \geq 1$ define $H(r,s)$ to be the graph with order $2r+s$ formed from the disjoint union of $K(r)$ with a path $P_s$ with vertex set $[s]$ by adding an edge from the vertex $1$ of $P$ to every vertex of $X$. An example is shown in Figure~\ref{fig:H(r,s) gp vs mp gp-colouring}. 
		
		The sets $Y \cup \{ 1\}$ and $X \cup \{ 2\} $ (or just $X$, if $s = 1$) are both in general position, and so can be coloured with two colours. If $s \geq 3$, the remaining $s-2$ vertices of the path $P_s$ can be coloured with $\left \lceil \frac{s-2}{2} \right \rceil$ further colours, so $\chi _{\gp }(H(r,s)) \leq \left \lceil \frac{s+2}{2} \right \rceil $. The diameter of $H(r,s)$ is $\max \{ 3,s+1\} $, so it follows from Corollary~\ref{lem:diameter bound} that $\left \lceil \frac{s+2}{2} \right \rceil $ is also a lower bound. Hence $\chi _{\gp }(H(r,s)) = \left \lceil \frac{s+2}{2} \right \rceil $ for $r \geq 3, s \geq 1$.
		
		Let $M$ be a monophonic position set of $H(r,s)$. $M$ can contain at most two vertices of the path $P_s$ and, if it does contain two such vertices, then $|M| = 2$. It follows from Theorem 3.5 of~\cite{extremal position} that the subgraph of $H(r,s)$ induced by $V(K(r)) \cup \{ 1\}$ has monophonic position number two. Therefore $|M| \leq 3$ and we have $|M| = 3$ if and only if $M$ contains one vertex of $P_s - \{ 1\} $ and i) two vertices of $X$, ii) two vertices of $Y$, or iii) a pair $\{ x_i,y_i\} $ for some $1 \leq i \leq r$. Thus if $r \leq s-1$, then \[ \chi _{\mono }(H(r,s)) = r+\left \lceil \frac{s-r}{2} \right \rceil = \left \lceil \frac{r+s}{2} \right \rceil .\] If $r \geq s$, then only $s-1$ colour classes can contain three vertices of $H(r,s)$, so that \[ \chi _{\mono }(H(r,s)) = s-1 + \left \lceil \frac{2r+s-3(s-1)}{2} \right \rceil = r+1. \] For fixed $s \geq 1$, if we vary $r$ from $3$ to $s-1$, then we get all values of $\chi _{\mono }$ from $\left \lceil \frac{s+3}{2} \right \rceil $ to $s$, and for $r \geq s$ we get all values of $\chi _{\mono }$ from $s+1$ onwards. Hence, setting $s = 2a-1$ and increasing $r$ from $3$ yields all the required values for $b > a \geq 2$. This completes our proof.
		
		\begin{figure}\centering
			\begin{tikzpicture}[x=0.2mm,y=-0.2mm,inner sep=0.1mm,scale=1.50,
				thick,vertex/.style={circle,draw,minimum size=10,font=\tiny,fill=white},edge label/.style={fill=white}]
				\tiny
				\node at (-350,-25) [vertex,color=blue] (x1) {$x_1$};
				\node at (-290,-25) [vertex,color=blue] (x2) {$x_2$};
				\node at (-230,-25) [vertex,color=blue] (x3) {$x_3$};
				\node at (-170,-25) [vertex,color=blue] (x4) {$x_4$};
				\node at (-110,-25) [vertex,color = blue] (x5) {$x_5$};
				
				\node at (-350,25) [vertex,color=red] (y1) {$y_1$};
				\node at (-290,25) [vertex,color=red] (y2) {$y_2$};
				\node at (-230,25) [vertex,color=red] (y3) {$y_3$};
				\node at (-170,25) [vertex,color=red] (y4) {$y_4$};
				\node at (-110,25) [vertex,color=red] (y5) {$y_5$};
				
				\node at (-230,-50) [vertex,color=red] (z0) {$z_0$};
				\node at (-230,-75) [vertex,color=blue] (z1) {$z_1$};
				\node at (-230,-100) [vertex,color=yellow] (z2) {$z_2$};
				\node at (-230,-125) [vertex,color=yellow] (z3) {$z_3$};
				\node at (-230,-150) [vertex,color=green] (z4) {$z_4$};
				\node at (-230,-175) [vertex,color=green] (z5) {$z_5$};

				\node at (-50,-25) [vertex,color=red] (a1) {$x_1$};
				\node at (10,-25) [vertex,color=blue] (a2) {$x_2$};
				\node at (70,-25) [vertex,color=blue] (a3) {$x_3$};
				\node at (130,-25) [vertex,color=yellow] (a4) {$x_4$};
				\node at (190,-25) [vertex,color=yellow] (a5) {$x_5$};
				
				\node at (-50,25) [vertex,color=red] (b1) {$y_1$};
				\node at (10,25) [vertex,color=green] (b2) {$y_2$};
				\node at (70,25) [vertex,color=green] (b3) {$y_3$};
				\node at (130,25) [vertex,color=pink] (b4) {$y_4$};
				\node at (190,25) [vertex,color=pink] (b5) {$y_5$};
				
				\node at (70,-50) [vertex,color=brown] (c0) {$z_0$};
				\node at (70,-75) [vertex,color=pink] (c1) {$z_1$};
				\node at (70,-100) [vertex,color=green] (c2) {$z_2$};
				\node at (70,-125) [vertex,color=yellow] (c3) {$z_3$};
				\node at (70,-150) [vertex,color=blue] (c4) {$z_4$};
				\node at (70,-175) [vertex,color=red] (c5) {$z_5$};
				
				\path
				
				(z0) edge (x1)
				(z0) edge (x2)
				(z0) edge (x3)
				(z0) edge (x4)
				(z0) edge (x5)
				
				(z0) edge (z1)
				(z1) edge (z2)
				(z2) edge (z3)
				(z3) edge (z4)
				(z4) edge (z5)
				
				(x1) edge (y2)
				(x1) edge (y3)
				(x1) edge (y4)
				(x1) edge (y5)
				
				(x2) edge (y1)
				(x2) edge (y3)
				(x2) edge (y4)
				(x2) edge (y5)
				
				(x3) edge (y2)
				(x3) edge (y1)
				(x3) edge (y4)
				(x3) edge (y5)
				
				(x4) edge (y2)
				(x4) edge (y3)
				(x4) edge (y1)
				(x4) edge (y5)
				
				(x5) edge (y2)
				(x5) edge (y3)
				(x5) edge (y4)
				(x5) edge (y1)

				(c0) edge (a1)
				(c0) edge (a2)
				(c0) edge (a3)
				(c0) edge (a4)
				(c0) edge (a5)
				
				(c0) edge (c1)
				(c1) edge (c2)
				(c2) edge (c3)
				(c3) edge (c4)
				(c4) edge (c5)
				
				(a1) edge (b2)
				(a1) edge (b3)
				(a1) edge (b4)
				(a1) edge (b5)
				
				(a2) edge (b1)
				(a2) edge (b3)
				(a2) edge (b4)
				(a2) edge (b5)
				
				(a3) edge (b2)
				(a3) edge (b1)
				(a3) edge (b4)
				(a3) edge (b5)
				
				(a4) edge (b2)
				(a4) edge (b3)
				(a4) edge (b1)
				(a4) edge (b5)
				
				(a5) edge (b2)
				(a5) edge (b3)
				(a5) edge (b4)
				(a5) edge (b1)
				
				;

			\end{tikzpicture}
			\caption{$H(5,6)$ with an optimal $\gp $-colouring (left) and $\mono $-colouring (right).}
			\label{fig:H(r,s) gp vs mp gp-colouring}
		\end{figure}
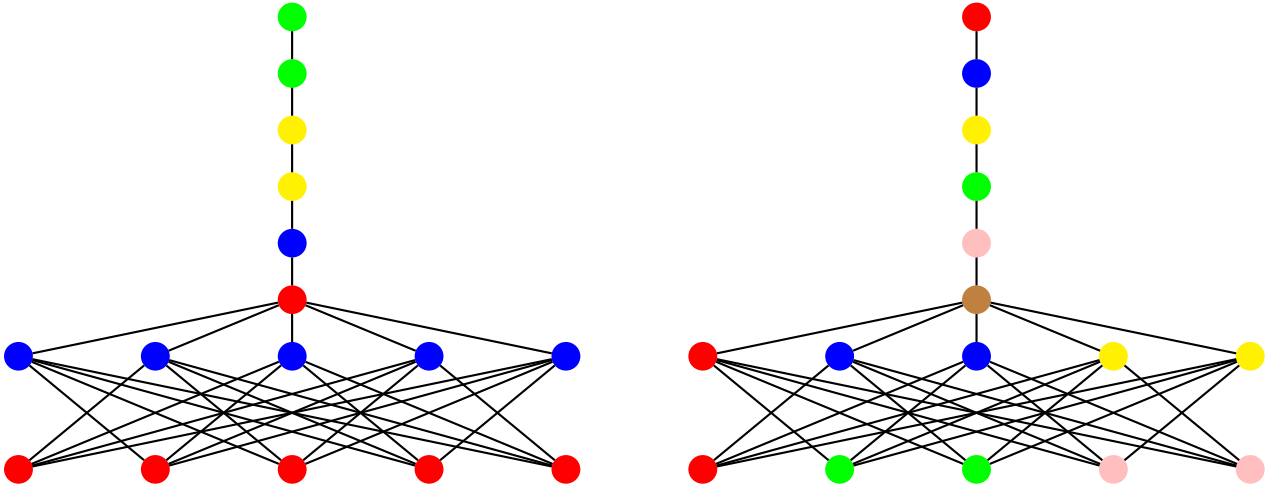	
		
	\end{proof}

	\begin{theorem}
		For $1 \leq a \leq b$, there exists a graph with $\chi _{\mu }(G) = a$ and $\chi _{\gp }(G) = b$ if and only if $a = b = 1$ or $2 \leq a \leq b$.
	\end{theorem}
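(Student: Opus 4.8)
The plan is to mirror the structure of the previous realisation theorem. By Lemma~\ref{lem:chi_xi inequality} we have $\chi_{\mu}(G) \le \chi_{\gp}(G)$, so the condition $a \le b$ is forced, and as before if $a = 1$ then $G$ must be a disjoint union of cliques (Lemma~\ref{lem:cliques}, since a connected graph with $\chi_{\mu} = 1$ has $\mu(G) = n$, hence is complete; for disconnected graphs the convention forces each component complete), whence $\chi_{\gp}(G) = 1 = b$. So we only need the constructive direction for $2 \le a \le b$. Here the easy case is $a = b$: the path $P_{2a}$ has $\chi_{\mu}(P_{2a}) = \chi_{\gp}(P_{2a}) = a$ by Lemma~\ref{lem:ni/xi bound} and its corollary (every pair of vertices is mutually visible, and $\mu(P_n) = \gp(P_n) = 2$). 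So the real work is to construct, for each pair $2 \le a < b$, a graph with $\chi_{\mu} = a$ and $\chi_{\gp} = b$.

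For the main construction I would adapt the graph $H(r,s)$ idea but replace the gadget $K(r)$ by something whose \emph{general position} number is small (to force many colours in a $\gp$-colouring) while its \emph{mutual-visibility} number is large (so that few colours suffice for a $\mu$-colouring). A natural candidate is a graph built from pairwise "twin-like" vertices at mutual distance $2$ that nonetheless all lie on common shortest paths through a shared hub: e.g. take a central vertex $c$ adjacent to $r$ vertices $v_1, \dots, v_r$, and make the $v_i$ pairwise nonadjacent, so that any three of them have one on a shortest path between the other two (through $c$) — this kills large general position sets among the $v_i$ — but all of $\{v_1, \dots, v_r\}$ is mutually visible because each pair $v_i, v_j$ sees each other via the two-edge path through $c$ only if $c \notin S$, so one must be a little more careful; a cleaner choice is a "book"-type or complete-bipartite-minus-matching gadget as in the previous proof, now analysed for both parameters. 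Concretely, I would reuse $H(r,s)$: the path $P_s$ contributes $\lceil (s+1)/2 \rceil$-ish colours to both $\chi_{\mu}$ and $\chi_{\gp}$ via the diameter-type bound, the gadget $K(r)$ has $\gp$-number $2$ (so a $\gp$-colouring needs $\sim r/1$ extra classes — actually one checks $K(r)$ has $\gp$-number exactly $2$, forcing $\lceil |V(K(r))|/2 \rceil = r$ classes there) but $\mu(K(r))$ is large (indeed $K(r)$ is a diameter-$2$ graph in which $X$ and $Y$ are each mutually visible and in fact $\mu(K(r)) = 2r - 2$ or similar), so a $\mu$-colouring of $H(r,s)$ needs only a bounded number of classes for the gadget. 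Tuning $r$ and $s$ then sweeps out all pairs $2 \le a < b$.

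The key steps, in order, are: (i) dispose of $a \le b$ and the $a = 1$ and $a = b$ cases as above; (ii) fix the gadget $G_0$ and verify $\gp(G_0) = 2$ (or whatever small value makes the arithmetic work), using that any three vertices of the relevant independent set lie on a common geodesic through the hub; (iii) verify that $\mu(G_0)$ is large, by exhibiting an explicit large mutually-visible set and an explicit $\mu$-colouring with few classes; (iv) attach a path $P_s$ to $G_0$ at a single vertex of the hub side (as in $H(r,s)$), compute $\diam$ of the resulting graph, and apply Lemma~\ref{lem:diameter bound} for the $\gp$ lower bound and the analogous mutual-visibility diameter bound (Lemma 2.1 / Prop. 5.3 of~\cite{mvcoloring}, quoted in Lemma~\ref{lem:ni/xi bound}) for the $\mu$ lower bound; (v) produce matching upper-bound colourings for both parameters by colouring the path in pairs and the gadget optimally; (vi) check that as $(r,s)$ ranges over the admissible region, $(\chi_{\mu}, \chi_{\gp})$ hits every pair with $2 \le a < b$, handling small exceptional values of $a$ and $b$ by hand if the formulas degenerate.

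The main obstacle I expect is step (v) together with the arithmetic bookkeeping in step (vi): ensuring that the upper-bound colourings for $\chi_{\mu}$ and $\chi_{\gp}$ exactly match the lower bounds simultaneously, and that the two resulting integer formulas in $r$ and $s$ have ranges covering all of $\{(a,b) : 2 \le a < b\}$ without gaps. In particular one must be careful that the colour classes straddling the junction between the gadget and the path are genuinely valid position sets of the \emph{whole} graph (a geodesic may leave the gadget, pass through the hub, and enter the path), which is exactly the kind of subtlety that made the analysis of $H(r,s)$ in the preceding theorem delicate; verifying $\mu(G_0)$ precisely (rather than just "large") is the other place where a clean argument — probably an explicit visibility-preserving pairing — will be needed.
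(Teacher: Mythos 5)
Your treatment of the boundary cases is fine (the lower bound $a\leq b$, the case $a=1$, and $P_{2a}$ for $a=b$, since $\mu(P_n)=\gp(P_n)=2$). The constructive heart of the theorem, however --- realising every pair with $2\leq a<b$ --- is not carried out, and more importantly both gadgets you float would fail to separate $\chi_{\mu}$ from $\chi_{\gp}$. What you need is a gadget with \emph{bounded general position number} but \emph{large mutual-visibility number}, and neither candidate has the first property. In $K(r)$ (that is, $K_{r,r}$ minus a perfect matching) the set $X=\{x_1,\dots,x_r\}$ is a general position set of size $r$: any geodesic between two vertices of $X$ has length two with a $Y$-vertex in the middle, so no geodesic contains three vertices of $X$. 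Your claim that ``$K(r)$ has gp-number exactly $2$'' confuses general position with \emph{monophonic} position --- it is the mp-number of $K(r)\cup\{z_0\}$ that equals two, which is precisely what made $H(r,s)$ work in the preceding theorem separating $\chi_{\gp}$ from $\chi_{\mono}$, not here. The star gadget fails for the same reason: the geodesic $v_i,c,v_j$ does not pass through $v_k$, so the $r$ leaves form a gp-set. With either gadget two colour classes cover the whole gadget in a $\gp$-colouring, and then $\chi_{\gp}$ of the assembled graph is governed by the pendant path exactly as $\chi_{\mu}$ is (a mutual-visibility set meets a pendant path in at most two vertices), so the two parameters coincide and no pair with $a<b$ is realised.

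The paper's construction supplies exactly the missing ingredient: take $J(r,s)$ to be the strong product $P_2\boxtimes P_{2r}$ with a path $P_{2s-1}$ attached by joining one of its leaves to the two end-vertices $(1,2r)$ and $(2,2r)$. The strong grid has $\gp(P_2\boxtimes P_{2r})=4$ (the diagonal geodesics kill larger gp-sets), yet its \emph{entire} vertex set is mutually visible, being the border of a strong grid; hence one colour suffices for the whole gadget in a $\mu$-colouring while a $\gp$-colouring needs on the order of $r$ classes there. One then checks $\chi_{\gp}(J(r,s))=r+s$ and $\chi_{\mu}(J(r,s))=s+1$, and $J(b-a+1,a-1)$ realises the pair $(a,b)$. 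If you want to salvage your outline, replace your gadget $G_0$ in steps (ii)--(iii) by a graph of this kind and the rest of your plan (diameter-type lower bounds for the path part, explicit matching colourings, arithmetic sweep) goes through.
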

	\begin{proof}
		Lemma~\ref{lem:chi_xi inequality} states that $\chi _{\mu }(G) \leq \chi _{\gp }(G)$. If $\chi _{\mu }(G) = 1$, then, as in the preceding proof, $G$ is a clique and $\chi _{\gp }(G) = 1$ as well, so assume that $a \geq 2$. Consider the graph $J(r,s)$ formed from the strong product $P_2 \boxtimes P_{2r}$ and a path $P_{2s-1}$ by joining a leaf of the path $P_{2s-1}$ to the two vertices $(1,2r)$ and $(2,2r)$. An optimal $\gp $-colouring requires at least $r+s$ colours by Corollary~\ref{lem:diameter bound} and a $\gp $-colouring using $r+s$ colours can be produced by colouring the subset $\{ 1,2\} \times \{ 2i-1,2i\} $ with different colours for $1 \leq i \leq r$ and then colouring the path $P_{2s-1}$ with $s$ colours as per Corollary~\ref{cor:paths}.
		
		We now show that $\chi _{\mu }(J(r,s)) = s+1$. By Corollary~\ref{lem:diameter bound} it takes at least $s$ colours to colour the path $P_{2s-1}$ in a $\mu $-colouring. Suppose for a contradiction that there is a $\mu $-colouring of $J(r,s)$ with $s$ colours. Then in order to colour the path $P_{2s-1}$, $s-1$ of the colour classes must contain two vertices of $P_{2s-1}$, and, as any pair of vertices from $P_{2s-1}$ is a maximal mutual-visibility set of $J(r,s)$, the remaining colour class must contain one vertex of $P_{2s-1}$ and all vertices of $\{ 1,2\} \times [2r]$, which is not a mutual-visibility set. Thus $\chi _{\mu }(J(r,s)) \geq s+1$. However, taking $\{ 1\} \times [2r]$ and the vertex $1$ of $P_{2s-1}$ as the first colour class, $\{ 2\} \times [2r] $ and the vertex $2$ (if $s \geq 2$) of the path $P_{2s-1}$ as the second colour class and then colouring the remaining $2s-3$ vertices of $P_{2s-1}$ greedily yields a $\mu $-colouring of $J(r,s)$ with $s+1$ colours.
		
		Therefore $J(b-a+1,a-1)$ satisfies \[ \chi _{\mu }J(b-a+1,a-1) = a \text{ and } \chi _{\gp }J(b-a+1,a-1) = b.\]    
	\end{proof}
	
	\section{Comparison with other graph parameters}\label{sec:graph parameters}
	
	In this section we explore the relationship between position colouring numbers and other well-known graph parameters, including the chromatic number, the clique cover number, the cochromatic number and the total domination number. We will also prove realisation results to compare these parameters. 
	
	As any clique is a monophonic position set, it follows that all three of $\chi _{\mu }(G)$, $\chi _{\gp }(G)$ and $\chi _{\mono }(G)$ are bounded above by the clique cover number $\theta (G)$.
	
	\begin{lemma}\label{lem:clique cover bound}
		For any graph $G$, if $\pi (G)$ is any one of $\mu (G)$, $\gp (G)$ or $\mono (G)$, then $\chi_{\pi }(G)\leq \theta (G)$.
	\end{lemma}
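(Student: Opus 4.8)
The plan is to exhibit an explicit $\pi$-colouring of $G$ with $\theta(G)$ colours, taking an optimal clique covering as the colouring. First I would fix a clique covering $\rho$ of $G$ using exactly $\theta(G)$ colours; by the definition of a clique covering, each colour class of $\rho$ induces a clique of $G$.

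The one point that needs verification is that a clique $C$ of $G$ is a monophonic position set. I would argue this directly: if some induced path $P = u_0u_1\cdots u_\ell$ contained three vertices of $C$, say $u_i,u_j,u_k$ with $i<j<k$, then $k \geq i+2$, and the definition of an induced path forces $u_i \not\sim u_k$; but $u_i,u_k \in C$ are adjacent, a contradiction. Hence no clique meets an induced path in more than two vertices, so every clique is a monophonic position set. Since every monophonic position set is a general position set and every general position set is a mutual-visibility set, and since a clique lies entirely within a single component of $G$ (so the convention for disconnected graphs in Section~\ref{sec:preliminary} is satisfied trivially), each colour class of $\rho$ is a $\pi$-set for every $\pi \in \{ \mu, \gp, \mono\}$.

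Therefore $\rho$ is a valid $\pi$-colouring of $G$ with $\theta(G)$ colours, giving $\chi_{\pi}(G) \leq \theta(G)$ in each of the three cases. There is no substantive obstacle here; the only minor subtlety is the observation that a clique contains at most two vertices of any induced path, which is exactly what places cliques inside all three position-set classes via the inclusions recorded just before Lemma~\ref{lem:chi_xi inequality}.
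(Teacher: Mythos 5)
Your proposal is correct and matches the paper's approach: the paper justifies this lemma with exactly the observation that any clique is a monophonic position set (hence also a general position and mutual-visibility set), so the colour classes of an optimal clique covering form a $\pi$-colouring. Your explicit verification that an induced path meets a clique in at most two vertices simply fills in the detail the paper leaves implicit.
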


	\begin{theorem}
		There exists a connected graph with $\gp $-chromatic number $a$ and clique cover number $b$ if and only if $a = b = 1$ or $2 \leq a \leq b$.
	\end{theorem}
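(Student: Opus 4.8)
The plan is to obtain the necessity of the claimed inequalities almost for free from results already in hand, and to settle the substantive converse by a single small‑tree construction with one tunable parameter. For \emph{necessity}: Lemma~\ref{lem:clique cover bound} gives $a=\chi_{\gp}(G)\le\theta(G)=b$ at once, and if $a=1$ then Lemma~\ref{lem:cliques} says $G$ is a disjoint union of cliques, which (reading the statement for connected $G$, as with the companion realisation theorems of this section) forces $G$ to be complete and hence $b=\theta(G)=1$. So the realisable pairs are exactly $a=b=1$ and $2\le a\le b$.

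For \emph{sufficiency}, the case $a=b=1$ is handled by any complete graph. For $2\le a\le b$ I would take the broom $T$ obtained from the path $v_1v_2\cdots v_{2a-1}$ by attaching $b-a$ pendant leaves at the penultimate vertex $v_{2a-2}$ (this is exactly the family used in the earlier order‑realisation result). Since $T$ is a tree, hence a block graph, Theorem~\ref{thm:block graph} gives $\chi_{\gp}(T)=\lceil\frac{\diam(T)+1}{2}\rceil$; a short eccentricity check — using $a\ge 2$, so that each new leaf lies at distance $2a-2$ from $v_1$, distance $2$ from $v_{2a-1}$, and distance $2$ from every other leaf — shows $\diam(T)=2a-2$, whence $\chi_{\gp}(T)=\lceil\frac{2a-1}{2}\rceil=a$. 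For the clique cover number I would bound both sides: the vertices $v_1,v_3,\dots,v_{2a-1}$ together with the $b-a$ leaves form an independent set of size $b$, so $\theta(T)\ge\alpha(T)\ge b$; conversely the $a-1$ edges $v_1v_2,v_3v_4,\dots,v_{2a-3}v_{2a-2}$, the singleton $\{v_{2a-1}\}$, and the $b-a$ one‑vertex leaf cliques form a clique partition of $T$ of size $(a-1)+1+(b-a)=b$, so $\theta(T)\le b$ and hence $\theta(T)=b$.

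I do not expect a genuine obstacle. The only steps needing any care are the eccentricity computation — whose whole point is that attaching the leaves at the \emph{penultimate} vertex of the path pads the graph without lengthening any geodesic, so that the diameter, and therefore by the block‑graph formula the $\gp$‑chromatic number, is untouched — and the two routine clique‑cover estimates (the lower one using only the elementary fact that a clique meets an independent set in at most one vertex). The conceptual content of the proof is simply that a broom on $P_{2a-1}$ can be made to have clique cover number any prescribed value $\ge a$ while keeping $\chi_{\gp}=a$; the mildly delicate administrative point is the $a=1$ case of necessity, which leans on the Lemma~\ref{lem:cliques} characterisation of disjoint unions of cliques.
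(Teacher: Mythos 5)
Your proposal is correct and follows essentially the same route as the paper: the inequality $a\le b$ from Lemma~\ref{lem:clique cover bound}, the $a=1$ case via Lemma~\ref{lem:cliques}, and for $2\le a\le b$ the identical broom construction (a path of order $2a-1$ with $b-a$ leaves attached at the penultimate vertex), with $\chi_{\gp}=a$ read off from Theorem~\ref{thm:block graph}. The only cosmetic difference is in verifying $\theta(T)=b$ — you pair an explicit clique partition with the independent set bound $\theta\ge\alpha$, while the paper counts which leaves must be singleton cliques — and your remark that the $a=1$ direction implicitly needs connectedness (a disjoint union of $k$ cliques has $\chi_{\gp}=1$ but $\theta=k$) is if anything slightly more careful than the paper's own wording.
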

	\begin{proof}
		Suppose that there exists a graph $G$ with $\gp $-chromatic number $a$ and clique cover number $b$. By Lemma~\ref{lem:clique cover bound} we must have $a \leq b$. If $a = 1$, then by Lemma~\ref{lem:cliques} $G$ is a clique and we also have $b = 1$, so we can assume that $a \geq 2$. We use the same construction as in the proof of Theorem~\ref{thm:existence}; make a tree $T$ by adding $b-a$ leaves to the vertex $2a-2$ of the path $P_{2a-1}$. By Theorem~\ref{thm:block graph} we have $\chi _{\gp }(T) = a$. There are $b-a+1$ leaves adjacent to the vertex $2a-2$, $b-a$ of which cannot belong to a clique of order $\geq 2$ in a clique covering of $T$; it follows that the clique cover number of $T$ is $b-a+\left \lceil \frac{2a-1}{2} \right \rceil = b$. Thus the tree $T$ has the required properties.   
	\end{proof}
	
	Since any $\igp $- or $\imp $-colouring is proper, both of the associated position chromatic numbers are bounded below by the chromatic number.
	
	\begin{lemma}\label{lem:chi inequality}
		For any graph $G$, if $\pi (G)$ is any one of $\gp (G)$, $\mono (G)$ or $\mu (G)$, then $\chi _{\pi _i}(G) \geq \chi (G)$. 
	\end{lemma}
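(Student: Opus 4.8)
The plan is to unwind the definitions: a $\pi_i$-colouring is, essentially by fiat, a proper colouring, so its colour count cannot beat $\chi(G)$. Concretely, I would fix a $\pi_i$-colouring $\rho$ of $G$ attaining $\chi_{\pi_i}(G)$ colours. By the Convention governing the independent position invariants, each colour class of $\rho$ is an independent $\pi$-set, and in particular is an independent set of $G$. Hence no edge of $G$ is monochromatic under $\rho$, so $\rho$ is a proper colouring using $\chi_{\pi_i}(G)$ colours, and minimality of the chromatic number gives $\chi(G) \le \chi_{\pi_i}(G)$.

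The three cases $\pi(G) \in \{\gp(G), \mono(G), \mu(G)\}$ require no separate treatment: in each case the independent variant $\pi_i$ is defined to be a $\pi$-set that is additionally independent, so the colour classes of a $\pi_i$-colouring are automatically independent and the single-line argument above applies verbatim to $\chi_{\igp}$, $\chi_{\imp}$ and $\chi_{\mu_i}$ alike.

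I do not expect any obstacle here — the statement is an immediate consequence of the definition of the independent position numbers, and the only point worth making explicit is precisely that this definition bakes independence of the colour classes into every $\pi_i$-colouring, so that such colourings form a subfamily of the proper colourings of $G$.
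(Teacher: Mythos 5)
Your argument is correct and is precisely the paper's: the lemma is justified there by the single observation that every $\igp$- or $\imp$-colouring (and likewise every $\mu_i$-colouring) has independent colour classes and is therefore a proper colouring, so at least $\chi(G)$ colours are needed. No differences to report.
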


	\begin{theorem}\label{thm:diam three}
		If $G$ is a graph with $\diam(G)\leq 3$, and $\pi (G)$ is either $\mu (G)$ or $\gp(G)$, then $\chi_{\pi}(G) \leq \chi (G)$ and $\chi _{\pi _i}(G) = \chi({G})$.
	\end{theorem}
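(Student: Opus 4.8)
The plan is to show that when $\diam(G) \leq 3$, a proper colouring of $G$ is automatically a $\gp$-colouring (and a $\mu$-colouring), after which the inequalities $\chi_{\pi}(G) \leq \chi_{\pi_i}(G) \leq \chi(G)$ follow. The crucial observation is that in a graph of diameter at most $3$, every shortest path has length at most $3$, so no shortest path contains three pairwise non-adjacent vertices. Indeed, suppose $u, v, w$ lie on a common shortest path $P$ with $w$ internal. Then $P$ has length $d(u,v) \geq 2$; if $d(u,v) = 2$ then $w$ is the unique internal vertex and the whole path has only these three vertices, two of which ($u$ and $w$, and $v$ and $w$) are adjacent; if $d(u,v) = 3$ then $P = u, w_1, w_2, v$ with $\{w\} \subseteq \{w_1, w_2\}$, and again $w$ is adjacent to one of the other two chosen vertices. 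In every case, the three vertices of $P$ that we picked are not pairwise non-adjacent.

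First I would make the above argument precise: take any independent set $I$ of $G$ and show it is a general position set. If not, some shortest path $P$ contains three vertices $u, v, w \in I$ with, say, $w$ an internal vertex of $P$; since $\diam(G) \leq 3$ the length of $P$ is at most $3$, and as just noted $w$ must be adjacent to $u$ or to $v$, contradicting independence of $I$. Hence every independent set of $G$ is a general position set, and therefore an independent general position set; since any general position set is a mutual-visibility set, it is also an independent mutual-visibility set. Consequently any proper colouring of $G$ is simultaneously a $\gp$-colouring, a $\gp_i$-colouring, a $\mu$-colouring and a $\mu_i$-colouring.

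Then I would conclude: taking an optimal proper colouring with $\chi(G)$ colours gives $\chi_{\pi_i}(G) \leq \chi(G)$ for $\pi \in \{\gp, \mu\}$, and combining with Lemma~\ref{lem:chi_xi inequality} (which gives $\chi_{\pi}(G) \leq \chi_{\pi_i}(G)$) yields $\chi_{\pi}(G) \leq \chi(G)$ as well. This disposes of both claimed inequalities for both invariants at once.

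I do not anticipate a serious obstacle here; the only point requiring a little care is the case analysis on the length of the shortest path $P$ and the position of the internal vertex $w$, to be sure that in each case the selected internal vertex is adjacent to at least one of the two selected endpoints. One should also note explicitly that the statement uses only $\diam(G) \leq 3$ and says nothing about connectivity beyond that; since the paper's convention handles disconnected graphs component-wise and $\diam(G) \leq 3$ forces each component to have diameter at most $3$, the argument applies verbatim to each component and hence to $G$.
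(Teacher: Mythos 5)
Your proof is correct and takes essentially the same route as the paper: the paper's argument is the one-line observation that in a graph of diameter at most three every independent set is in general position, whence any proper colouring is simultaneously a $\gp$-, $\gp_i$-, $\mu$- and $\mu_i$-colouring. You have merely supplied the short case analysis (on whether the selected internal vertex lies on a geodesic of length two or three) that the paper leaves implicit.
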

	\begin{proof} 
		For any graph $G$ with $\diam(G)\leq 3$, every independent set of $G$ is in general position. Therefore any proper colouring is a $\gp $-colouring and $\chi _{\gp }(G),\chi_{\mu }(G) \leq \chi (G)$. For independent general position and mutual visibility colourings equality follows from Lemma~\ref{lem:chi inequality}.
	\end{proof}

	\begin{theorem}
		For $a \geq 1$, there exists a graph $G$ with order $n$ and $\chi_{\igp}(G) = a$ if and only if $n\geq a$. Also, for any $1 \leq a \leq b$ there exists a graph $G$ with $\chi_{\gp}(G) = a$ and $\chi_{\igp}(G) = b$.
		\begin{proof}
			Clearly $\chi_{\igp}(G) = a$ implies $n \geq a$. If $n = a$, then $K_a$ will suffice. If $n > a$, form a graph $G(a)$ by attaching $n-a$ leaves to a vertex of a clique $K_a$. $G(a)$ has diameter two, so by Theorem~\ref{thm:diam three} we have $\chi _{\gp _i}(G(a)) = \chi (G(a)) = a$.
			
			For the second result, if $a = 1$, then the clique $K_b$ has $\chi _{\gp }(K_b) = 1$ and $\chi _{\gp _i}(K_b) = b$, so assume that $2 \leq a \leq b$. Consider the block graph of order $2b$ formed from two cliques of order $b-a+2$ and a path $P_{2a-2}$ by identifying one endpoint of the path with a vertex of the first clique and the other endpoint of $P_{2a-2}$ with a vertex of the other clique. By Theorem~\ref{thm:block graph} this graph has $\gp $-chromatic number $a$. Any independent general position set in this graph contains at most two vertices, and so the $\gp _i$-chromatic number is exactly $b$. 
		\end{proof}
	\end{theorem}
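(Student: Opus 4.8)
The plan is to dispatch the two assertions separately, in each case leaning on the block-graph formula of Theorem~\ref{thm:block graph} together with the elementary bound of Lemma~\ref{lem:ni/xi bound}.

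\emph{First assertion.} The implication $\chi_{\igp}(G)=a \Rightarrow n\ge a$ is immediate: assigning every vertex its own colour is an $\igp$-colouring, so $a=\chi_{\igp}(G)\le\n(G)$. For the converse, given $n\ge a$ I would produce a graph of order $n$. If $a=1$, take the empty graph $\overline{K_n}$, which has $\chi_{\igp}=1$ by Lemma~\ref{lem:cliques}. If $a\ge 2$ and $n=a$, take $K_a$: any two of its vertices are adjacent, so each colour class of an $\igp$-colouring is a single vertex and $\chi_{\igp}(K_a)=a$. If $a\ge 2$ and $n>a$, form $G(a)$ by attaching $n-a$ leaves to one vertex of a clique $K_a$; then $G(a)$ has order $n$, diameter $2$ and chromatic number $a$, so Corollary~\ref{cor:igpdiam3} gives $\chi_{\igp}(G(a))=\chi(G(a))=a$.

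\emph{Second assertion.} When $a=1$ the clique $K_b$ works, since $\chi_{\gp}(K_b)=1$ and $\chi_{\igp}(K_b)=b$ by Lemma~\ref{lem:cliques}; so assume $2\le a\le b$. I would take two disjoint cliques $A$ and $B$ of order $b-a+2$ together with a path $z_1 z_2\cdots z_{2a-2}$, and form the block graph $G$ by identifying $z_1$ with a vertex of $A$ and $z_{2a-2}$ with a vertex of $B$ (for $a=b$ this is just $P_{2b}$). A direct count gives $|V(G)|=2(b-a+2)+(2a-2)-2=2b$. Since $b-a+2\ge 2$, the diameter is realised by a vertex of $A\setminus\{z_1\}$ and a vertex of $B\setminus\{z_{2a-2}\}$, at distance $1+(2a-3)+1=2a-1$; hence $\diam(G)=2a-1$ and Theorem~\ref{thm:block graph} yields $\chi_{\gp}(G)=\lceil\tfrac{2a}{2}\rceil=a$.

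\emph{The heart of the argument is $\igp(G)=2$.} Let $S$ be an independent general position set of $G$. As $A$ and $B$ are cliques, $|S\cap A|\le 1$ and $|S\cap B|\le 1$; and since the path $z_1,\dots,z_{2a-2}$ is a geodesic of $G$ (being the unique path joining its ends that repeats no block), $|S\cap\{z_1,\dots,z_{2a-2}\}|\le 2$. Writing $V(G)$ as the disjoint union $(A\setminus\{z_1\})\cup\{z_1,\dots,z_{2a-2}\}\cup(B\setminus\{z_{2a-2}\})$, suppose $|S|\ge 3$. Then $S$ contains a path vertex $z_k$ together with a vertex $s$ lying in $A\setminus\{z_1\}$ or in $B\setminus\{z_{2a-2}\}$ (otherwise $S$ would lie entirely on the path, forcing $|S|\le 2$); assume $s\in A\setminus\{z_1\}$, the other case being symmetric. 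A third vertex $t\in S$ must then lie on the path with $t\ne z_k$, or in $B\setminus\{z_{2a-2}\}$, because $|S\cap A|\le 1$. If $t=z_j$ is on the path, then (relabelling so that $j<k$) the geodesic $s,z_1,z_2,\dots,z_k$ contains $s$, $z_j$ and $z_k$; if $t\in B\setminus\{z_{2a-2}\}$, then the geodesic $s,z_1,\dots,z_{2a-2},t$ contains $s$, $z_k$ and $t$. Either way three vertices of $S$ lie on a common shortest path, a contradiction. Hence $\igp(G)=2$, with equality since any two non-adjacent vertices are in independent general position. Consequently every colour class of an $\igp$-colouring has at most two vertices, so $\chi_{\igp}(G)=\n(G)-\nu(\overline{G})$ with $\nu$ the matching number, and one checks that $\overline{G}$ has a perfect matching (equivalently, $V(G)$ splits into $b$ non-adjacent pairs), giving $\chi_{\igp}(G)=2b-b=b$. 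The main obstacle is this $\igp(G)=2$ claim: one must organise the case analysis on the location of the third vertex of $S$ so as always to produce a shortest path through three of its vertices, using that in a block graph the block-nonrepeating path between two vertices is a geodesic; exhibiting the perfect matching of $\overline{G}$ is routine, requiring only mild care for small $a$ (so that no pair turns out to be an edge of $G$, which is why path and clique vertices must be paired across each other).
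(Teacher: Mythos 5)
Your proof is correct and follows essentially the same route as the paper: the same clique-plus-leaves construction for the first assertion and the same block graph (two cliques of order $b-a+2$ joined by a path $P_{2a-2}$) for the second, justified via Corollary~\ref{cor:igpdiam3} and Theorem~\ref{thm:block graph}. You merely supply more detail than the paper on the claims that $\igp(G)=2$ and that $\overline{G}$ has a perfect matching, and your separate treatment of the case $a=1$, $n>1$ via the empty graph is a small but genuine repair, since the paper's construction $G(1)$ is a star with chromatic number two.
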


	\begin{theorem}
		There exists a graph with $\chi (G) = a$ and $\chi _{\igp }(G) = b$ if and only if $a = b = 1$ or $2 \leq a \leq b$. Also, there exist graphs with $\chi (G) = a$ and $\chi_{\imp }(G) = b$ for exactly the same range of $a$ and $b$.
	\end{theorem}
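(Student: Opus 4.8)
The plan is to prove the two statements at once, via a single family of graphs. For necessity, recall that $\imp$ is $\mono _i$, so Lemma~\ref{lem:chi inequality} gives $\chi (G)\le \chi _{\igp }(G)$ and $\chi (G)\le \chi _{\imp }(G)$, so that $a\le b$ in both cases; and if $a=1$ then $G$ is edgeless, hence $\chi _{\igp }(G)=\chi _{\imp }(G)=1$ by Lemma~\ref{lem:cliques}, forcing $b=1$. Thus the stated range is necessary, and $a=b=1$ is realised by $K_1$.

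For $2\le a\le b$, take $G=G(a,b)$ to be the graph obtained from a clique $K_a$ with vertex set $\{u_1,\dots ,u_a\}$ by attaching to $u_1$ a pendant path $u_1=w_0,w_1,\dots ,w_m$ with $m=2b-a$; then $m\ge a\ge 2$ and $|V(G)|=2b$. Every block of $G$ is a clique, so $G$ is a block graph and $\chi (G)=\omega (G)=a$. The crucial point is that $\igp(G)=\imp(G)=2$. Indeed, an independent set meets $K_a$ in at most one vertex, so if $S$ is independent with $|S|\ge 3$ it contains two path vertices $w_i,w_j$ with $1\le i<j$, and in each of the three cases $S\cap V(K_a)=\emptyset$, $S\cap V(K_a)=\{w_0\}$, or $S\cap V(K_a)=\{u_k\}$ with $k\ge 2$ one finds three members of $S$ on a common path of $G$ that is at once induced and a geodesic — respectively a subpath of $w_0w_1\cdots w_m$ through three path vertices of $S$, the path $w_0w_1\cdots w_j$, or the path $u_kw_0w_1\cdots w_j$. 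Hence $S$ is neither an independent general position set nor an independent monophonic position set, whereas $\{u_2,w_1\}$ is both, so the two numbers equal $2$.

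Now Lemma~\ref{lem:ni/xi bound}, applied once with $\pi =\igp$ and once with $\pi =\imp$, gives $\chi _{\igp }(G)\ge \lceil 2b/2\rceil =b$ and $\chi _{\imp }(G)\ge b$. For the reverse inequalities it suffices to partition $V(G)$ into $b$ two-element independent sets, since any such set is simultaneously an independent general position set and an independent monophonic position set; equivalently, $\overline G$ must have a perfect matching, which one writes down explicitly by matching $u_k$ with $w_{k-1}$ for $2\le k\le a$, then $w_0$ with $w_b$, and $w_{a+i}$ with $w_{b+1+i}$ for $0\le i\le b-a-1$ (all pairs non-adjacent in $G$ because $a\ge 2$). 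Hence $\chi _{\igp }(G)=\chi _{\imp }(G)=b$, and the same graph settles both parts. The main obstacle will be the verification that $\igp(G)=\imp(G)=2$ — essentially the observation that any independent triple is trapped on one of the ``arms'' of this block graph — after which the bounds follow from the cited lemmas together with the routine matching above.
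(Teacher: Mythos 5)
Your proposal is correct and follows essentially the same route as the paper: both use a ``lollipop'' graph consisting of a clique $K_a$ with a pendant path of the appropriate length, so that $\chi(G)=\omega(G)=a$ while independent position sets are forced to be tiny. The only difference is in the lower bound for $\chi_{\igp}$: you compute $\igp(G)=\imp(G)=2$ and invoke the pigeonhole bound $\lceil n/\pi(G)\rceil$, whereas the paper argues directly about how an optimal $\igp$-colouring must distribute colours between the clique and the path; both arguments are valid and of comparable length.
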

	\begin{proof}
		By Lemma~\ref{lem:chi inequality} we must have $a \leq b$. The only graphs with $\chi (G) = 1$ are the empty graphs, for which the $\igp $-chromatic number is also one, and if $a = b$, then $K_a$ suffices, so we can assume that $2 \leq a < b$. Let $G(a,b)$ be the graph formed by identifying a leaf of a path $P_{2b-a}$ with a vertex of a clique $K_a$. The chromatic number of this graph is given by $\chi (G(a,b)) = \omega (G(a,b)) = a$. Any optimal $\igp $-colouring of $G(a,b)$ must assign distinct colours to each vertex of $K_a$; each of these colours can recur once on the attached path, leaving $2b-2a-1$ vertices on the path still to be coloured. As each colour class can contain at most two vertices from the path, the remaining vertices of the path require a further $b-a$ colours, and it is possible to choose each of these colour classes to be an independent set, so $\chi _{\igp }(G(a,b)) = b$. This completes the proof of existence. The same argument works for $\imp $-colourings.
	\end{proof}

	As any clique is in general position, it also follows from Theorem~\ref{thm:diam three} that for graphs with diameter at most three, the $\gp $- and $\mono $-chromatic numbers are bounded above by the cochromatic number.
	
	\begin{corollary}
		If $\diam (G) \leq 3$, then $\chi _{\gp }(G) \leq \chi _{\mono }(G) \leq \z (G)$.
	\end{corollary}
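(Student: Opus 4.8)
The plan is to show that an optimal cocolouring of $G$ is already a $\gp$-colouring, so that no extra colours are needed. First I would recall the definition: a cocolouring of $G$ with $\z(G)$ colours is a partition of $V(G)$ into colour classes each of which induces either a clique or an independent set. It therefore suffices to check that, under the hypothesis $\diam(G)\le 3$, every such colour class is a general position set.

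For the clique case this is immediate and needs no diameter hypothesis: a clique has diameter one, so no shortest path of $G$ can contain three of its vertices, hence a clique is always a $\gp$-set (this is exactly the observation underlying Lemma~\ref{lem:clique cover bound}). For the independent-set case I would invoke the key fact already established in the proof of Theorem~\ref{thm:diam three}: when $\diam(G)\le 3$, every independent set of $G$ is in general position. (Concretely, if $u,v,w$ were an independent triple with $w$ internal to a shortest $u,v$-path, then $d(u,v)=d(u,w)+d(w,v)\ge 2+2=4$, contradicting the diameter bound.) Combining the two cases, every colour class of the cocolouring is a general position set, so the cocolouring is a $\gp$-colouring of $G$ using $\z(G)$ colours, and therefore $\chi_{\gp}(G)\le\z(G)$.

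There is essentially no obstacle here: the corollary is a direct consequence of the dichotomy "clique or independent set" for cocolouring classes together with the diameter-three independence-implies-general-position fact from Theorem~\ref{thm:diam three}. The only point that requires a moment's care is confirming that \emph{both} alternatives for a colour class land inside the family of general position sets, and I have addressed each above. I would also remark in passing that the same argument yields $\chi_{\mu}(G)\le\z(G)$ when $\diam(G)\le 3$, since general position sets are mutually-visible, and (using that cliques are monophonic position sets and independent sets in diameter-$\le 3$ graphs are too) that $\chi_{\mono}(G)\le\z(G)$ in this range as well, matching the sentence preceding the statement.
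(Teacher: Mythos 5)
Your proof is correct and is essentially the paper's argument: the paper likewise obtains this corollary by combining the fact that cliques are always general position sets with Theorem~\ref{thm:diam three} (every independent set is in general position when $\diam(G)\leq 3$), so that each class of an optimal cocolouring is a $\gp$-set. Your explicit verification via $d(u,v)\geq 4$ for an independent triple on a geodesic, and your closing remarks about $\chi_{\mu}$ and $\chi_{\mono}$, match the paper's reasoning and surrounding discussion.
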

	The Petersen graph in Figure~\ref{fig:Petersen} gives an example of a diameter two graph in which the $\gp $-chromatic number is strictly less than the cochromatic number. More generally, the graph $G = K_1 \vee (kK_k)$ of order $k^2+1$ and diameter two has $\chi _{\gp }(G) = 2$, but $\z (G) = k$, so the difference between the cochromatic number and the $\gp $-chromatic number can be arbitrarily large.
	
	We now bound the $\gp $-chromatic number by the total domination number for $K_4^-$-free graphs.
	
	\begin{lemma}\label{tot domination bound}
		In a diamond-free graph $G$ without isolated vertices we have \[ \chi_{\gp}(G) \leq \gamma _t(G).\]
	\end{lemma}
	\begin{proof}
		If $G$ is diamond-free, then no neighbourhood $N(u)$ contains an induced path of length two. Hence each open neighbourhood is in general position. The vertices of $G$ can be covered with at most $\gamma _t(G)$ such neighbourhoods. 
	\end{proof}
	
	The bound in Lemma~\ref{tot domination bound} is sharp for paths with order divisible by four; it would be interesting to characterise the graphs that achieve equality. However, we now show that $\gamma _t(G)-\chi _{\gp }(G)$ can be arbitrarily large for diamond-free graphs.  
	
	\begin{theorem}
		There exists a diamond-free graph $G$ with $\chi _{\gp }(G) = a$ and $\gamma _t(G) = b$ if and only if $2 \leq a \leq b$ or $a = 1$ and $b$ is even. If $G$ is also required to be connected, then for $a = 1$ we must have $b = 2$.
	\end{theorem}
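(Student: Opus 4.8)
The strategy is to read off necessity from the bounds already in hand, and to prove sufficiency with a single explicit family of forests, exploiting that for a disjoint union the $\gp$-chromatic number is the maximum of the components' values while the total domination number is their sum.

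\emph{Necessity.} Suppose $G$ is diamond-free with $\chi_{\gp}(G)=a$ and $\gamma_t(G)=b$. Then $a\geq 1$ trivially, and $a\leq b$ is precisely Lemma~\ref{tot domination bound}, so $1\leq a\leq b$. If in addition $G$ is connected with $a=1$, then all of $V(G)$ is a single general position set, so no geodesic of $G$ contains three vertices; connectedness then forces $\diam(G)\leq 1$, i.e.\ $G$ is complete, and since $K_n\supseteq K_4^-$ for $n\geq 4$ we get $G\in\{K_1,K_2,K_3\}$, whence $\gamma_t(G)\in\{1,2\}$ (with the usual convention $\gamma_t(K_1)=1$). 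One could instead quote Lemma~\ref{lem:cliques} to see that $G$ is a disjoint union of cliques and hence a single clique.

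\emph{Sufficiency.} Given $1\leq a\leq b$, let $G=P_{2a-1}\sqcup(b-a)K_1$, which is a forest and so diamond-free. Since $\gp$-colourings act componentwise, $\chi_{\gp}(G)=\max\{\chi_{\gp}(P_{2a-1}),1\}$; here $\chi_{\gp}(P_{2a-1})=\lceil(2a-1)/2\rceil=a$ (trivially if $a=1$, and by the corollary to Lemma~\ref{lem:ni/xi bound} for $a\geq 2$), so $\chi_{\gp}(G)=a$. As total domination is additive over components, $\gamma_t(G)=\gamma_t(P_{2a-1})+(b-a)\gamma_t(K_1)$, and using the standard evaluation $\gamma_t(P_{2a-1})=a$ (for instance from $\gamma_t(P_n)=\lfloor n/2\rfloor+\lceil n/4\rceil-\lfloor n/4\rfloor$) together with $\gamma_t(K_1)=1$ we obtain $\gamma_t(G)=a+(b-a)=b$, as required.

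The only genuine computation here is the evaluation $\gamma_t(P_{2a-1})=a$, together with the bookkeeping for isolated components, and I expect this to be the main (if minor) obstacle. If one wants a realisation without isolated vertices, one replaces the $b-a$ copies of $K_1$ by $\lfloor(b-a)/2\rfloor$ copies of $K_2$, each adding $2$ to $\gamma_t$ and leaving $\chi_{\gp}$ unchanged; when $b-a$ is odd this leaves a parity deficit of $1$, which is absorbed either by using $P_{2a}$ in place of $P_{2a-1}$ (same $\gp$-chromatic number $a$, but $\gamma_t(P_{2a})=a+1$ when $a$ is odd) or, for even $a$, by a small dedicated gadget with $\chi_{\gp}=a$ and $\gamma_t=a+1$: the net (a triangle with a pendant vertex attached to each of its vertices) when $a=2$, and $P_{2a}$ with one extra pendant vertex hung at an interior vertex missed by its unique minimum total dominating set when $a\geq 4$. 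This parity-and-small-cases analysis is the only fiddly part of an otherwise routine construction.
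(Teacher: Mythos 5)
Your argument is correct and establishes the statement as literally worded, but it takes a genuinely different route from the paper's. You realise every pair $(a,b)$ with $1\leq a\leq b$ by the single disconnected family $P_{2a-1}\sqcup(b-a)K_1$, exploiting that $\chi_{\gp}$ of a disjoint union is the maximum of the components' values while $\gamma_t$ is their sum, and you get necessity from Lemma~\ref{tot domination bound} and Lemma~\ref{lem:cliques} exactly as the paper does. The paper reserves $bK_1$ for $a=1$ only and for every $a\geq 2$ constructs \emph{connected} diamond-free witnesses: paths $P_n$ (giving $b=a$, and $b=a+1$ when $a$ is odd), a path with an attached triangle $Q(4s+1)$ (giving $b=a+1$ when $a$ is even), and star-like trees $S(r,t)$ (giving all $b\geq a+2$). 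What the paper's extra work buys is the implicit content of the second sentence of the theorem, namely that connectedness restricts the realisable pairs \emph{only} at $a=1$; your construction (including the $K_2$-component and net/pendant variants, which remain disconnected whenever $b>a+1$) does not deliver connected witnesses for $a\geq 2$, so it proves the statement as written but not this stronger realisation fact. What your approach buys is brevity: one family, one computation $\gamma_t(P_{2a-1})=a$, and no case analysis.

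Two caveats worth recording. First, your primary construction leans on the convention $\gamma_t(K_1)=1$; under the paper's own definition an isolated vertex cannot be totally dominated, so strictly $\gamma_t$ is undefined there. The paper commits the same sin in asserting $\gamma_t(bK_1)=b$, so this is a shared wrinkle rather than an error of yours, but it is a reason to prefer your $K_2$-based variant for $a\geq 2$. Second, your claim that hanging a pendant at an interior vertex missed by the unique minimum total dominating set of $P_{2a}$ raises $\gamma_t$ by exactly one is true, but it needs the uniqueness of that dominating set and a short replacement argument (a dominating set of the augmented tree not using the pendant would be a minimum total dominating set of $P_{2a}$ containing the attachment vertex, contradicting uniqueness); as written it is an assertion rather than a proof. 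Neither point affects the correctness of your main argument.
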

	\begin{proof}
		The only graphs with $\chi _{\gp }(G) = 1$ are disjoint unions of cliques. The total domination number of any complete graph $K_n$, $n \geq 2$, is two, and so the total domination number of a disjoint union of cliques, none of which are $K_1$, must be even, and $\frac{b}{2}K_2$ has the required parameters. If we require $G$ to be connected and diamond-free, then only $b = 2$ is possible. 
		
		Now assume that $a \geq 2$. Recall that $\gamma _t(P_n)$ is $\frac{n}{2}$ if $n \equiv 0 \pmod{4}$, $\frac{n}{2}+1$ if $n \equiv 2 \pmod{4}$ and $\frac{n+1}{2}$ if $n$ is odd (see~\cite{HenYeo}).  Using Theorem~\ref{thm:block graph}, it follows that if $a = b = 2s$, then $\chi _{\gp }(P_{4s}) = \gamma _t(P_{4s}) = a$, and if $a = b= 2s+1$, then $\chi _{\gp }(P_{4s+1}) = \gamma _t(P_{4s+1}) = a$, so we can assume that $b > a$. If $a = 2$, then the graph formed by adding a leaf to each vertex of $K_b$ (that is, the corona product of $K_b$ and $K_1$) has the correct parameters, so we can also take $a \geq 3$. 
		
		We now show existence for the remaining values using spider graphs. Let $S(r,s)$ be the tree with one vertex $x$ of degree at least three such that $x$ is adjacent to an endvertex of $r \geq 2$ copies of $K_2$ and one path $P_s$. By Theorem~\ref{thm:block graph}, $\chi _{\gp }(S(r,s)) = \left \lceil \frac{s+3}{2} \right \rceil $ if $s \geq 2$ and $\chi _{\gp }(S(r,s)) = 3$ if $s \in \{ 0,1\} $. Hence if $a \geq 4$, we have $\chi _{\gp }(S(r,s)) = a$ if $s = 2a-3$ or $2a-4$. 
		
		Suppose that $K$ is a minimum total dominating set of $S(r,s)$. To dominate the leaves of $S(r,s)$ that belong to a copy of $K_2$ in $S(r,s)-x$, each of the $r$ support vertices of $S(r,s)$ that are neighbours of these leaves must belong to $K$ (and also dominate $x$). To dominate each of the support vertices, either the attached leaf or $x$ must be included in $K$; as $K$ is a minimum total dominating set, we can assume that $x \in K$, as this dominates all of these support vertices (as well as the first vertex of $P_s$ if $s \geq 1$). If $s \in \{ 0,1\} $ all the vertices are dominated, and $\gamma _t(S(r,s)) = r+1$, and so the graph $S(b-1,0)$ has $\chi _{\gp }(S(b-1,0)) = 3$ and $\gamma _t(S(b-1,0)) = b$. Hence we can now assume that $a \geq 4$. 
		
		If the neighbour of $x$ in $P_s$ belongs to $K$, then we can replace it by the third vertex of $P_s$, so it takes a further $\gamma _t(P_{s-1})$ vertices to dominate the remaining vertices of the path. Thus $\gamma _t(S(r,s)) = r+1+\gamma _t(P_{s-1})$. Thus if $a$ is even we have \[ \gamma _t(S(b-a+1,2a-3) = b-a+2+\gamma _t(P_{2a-4}) = b\] and if $a$ is odd then \[ \gamma _t(S(b-a+1,2a-4)) = b-a+2+\gamma _t(P_{2a-5}) = b.\]
	\end{proof}
	
	We note briefly that the diamond-free condition can be removed for mutual-visibility colourings, since any open neighbourhood is a mutual-visibility set.
	
	\begin{proposition}
		For any graph without isolated vertices, $\chi _{\mu }(G) \leq \gamma _t(G)$.
	\end{proposition}
	
	We finish this section with some bounds on the position colouring numbers in terms of size. We start with $\chi _{\gp _i}(G)$ and $\chi _{\mono _i}(G)$. Recall that the $a$-partite Tur\'{a}n graph with order $n$ is the complete $a$-partite graph on $n$ vertices with the parts as close together in order as possible; we will denote the size of this Tur\'{a}n graph by $t_a(n)$.
	
	\begin{theorem}
		If $\pi (G)$ is either $\gp (G)$ or $\mono (G)$, then the unique graph with $\chi _{\pi _i}(G) = a \geq 2$, order $n \geq a$ and largest possible size is the $a$-partite Tur\'{a}n graph with order $n$.
	\end{theorem}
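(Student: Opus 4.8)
The plan is to reduce the problem to Turán's theorem via the observation that every $\pi _i$-colouring is proper. Writing $T_a(n)$ for the $a$-partite Turán graph of order $n$ (so that $|E(T_a(n))| = t_a(n)$), I would first note that by Lemma~\ref{lem:chi inequality} any $\pi _i$-colouring is a proper colouring, so $\chi _{\pi _i}(G) = a$ forces $\chi (G) \le a$, and hence $G$ is $K_{a+1}$-free. Turán's theorem then gives $|E(G)| \le t_a(n)$, with equality exactly when $G = T_a(n)$, since the family of graphs with $\chi _{\pi _i} = a$ is contained in the family of $K_{a+1}$-free graphs.

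The remaining point is that $T_a(n)$ itself belongs to the extremal family, i.e.\ $\chi _{\pi _i}(T_a(n)) = a$; this is what makes $T_a(n)$ a genuine maximiser rather than merely an upper bound. The lower bound $\chi _{\pi _i}(T_a(n)) \ge \chi (T_a(n)) = a$ is again Lemma~\ref{lem:chi inequality}. For the matching upper bound I would colour $V(T_a(n))$ by its parts $V_1,\dots ,V_a$ and check that each $V_i$ is both a general position set and a monophonic position set. The key facts are that $\diam (T_a(n)) \le 2$ and that $T_a(n)$, being complete multipartite, contains no induced $P_4$ (non-adjacency being transitive, an induced $u_0u_1u_2u_3$ would force $u_0\not\sim u_1$); hence every shortest path and every induced path of $T_a(n)$ has length at most two, and such a three-vertex path $u - v - w$ has $u,w$ in a common part and $v$ in a different part, so it never meets a single $V_i$ in three vertices. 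Thus the partition into parts is a $\pi _i$-colouring with $a$ colours.

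Putting these together, $T_a(n)$ has $\chi _{\pi _i}(T_a(n)) = a$ and attains the maximum possible size $t_a(n)$, and uniqueness follows from the uniqueness clause in Turán's theorem. I do not expect a serious obstacle: the only delicate point is confirming that $T_a(n)$ actually realises $\chi _{\pi _i} = a$ (not just that it bounds the size), and within that the mild crux is the verification that each part of $T_a(n)$ is simultaneously in general position and in monophonic position, which, as sketched, follows uniformly for both choices of $\pi $ from the diameter bound together with the absence of induced $P_4$'s. As an alternative for $\pi = \gp $, one could instead cite Corollary~\ref{cor:igpdiam3} when $n > a$ and Lemma~\ref{lem:cliques} when $n = a$.
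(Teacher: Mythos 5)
Your proof is correct and follows essentially the same route as the paper: Tur\'{a}n's theorem supplies the upper bound $t_a(n)$ together with uniqueness of the extremal graph, and the partition of the $a$-partite Tur\'{a}n graph into its parts realises $\chi _{\pi _i} = a$. The only difference is that you verify directly (via $\diam \leq 2$ and the absence of induced $P_4$'s) that each part is simultaneously a general and a monophonic position set, where the paper instead cites a known characterisation of position sets in Tur\'{a}n graphs; your self-contained check is valid.
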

	\begin{proof}
		If $\chi _{\pi _i}(G) = a$, then as each colour class is an independent set, $G$ must be an $a$-partite graph. Hence Tur\'{a}n's Theorem tells us that the size of $G$ is bounded above by $t_a(n)$. It is shown in~\cite{extremal position} that a subset $S$ of the vertices of the Tur\'{a}n graph is in general or monophonic position if and only if the vertices of $S$ all come from different partite sets (i.e. $S$ induces a clique) or all vertices of $S$ lie in the same partite set (i.e. $S$ is an independent set). Hence in any $\pi _i$-colouring of the Tur\'{a}n graph choosing each partite set as a colour class is optimal, so that the $\pi _i$-chromatic number of the Tur\'{a}n graph is $a$ and the upper bound $t_a(n)$ on the size is achieved.
	\end{proof}
	
	By contrast, a graph with given $\gp $- or $\mono $-chromatic numbers can have almost all edges present.  
	
	\begin{theorem}
		For $a \geq 2$, the largest size of a graph with order $n$ and $\chi _{\gp }(G) = a$ is at least ${n \choose 2}-\frac{(a-1)a(a+1)}{6}$ for $n \geq \frac{a(a+1)}{2}$ and for $n \geq 2a$ the largest size of a graph with $\chi _{\mono }(G) = a$ is at least ${n \choose 2}-(a-2)(2a-1)$ if $a \geq 3$ and ${n \choose 2}-1$ if $a = 2$.  
	\end{theorem}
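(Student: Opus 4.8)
The plan is to exhibit, for each parameter, a single very dense graph whose $\gp$- (respectively $\mono$-) chromatic number is exactly $a$, and then to compute its size; since the theorem asks only for a lower bound on the extremal size, this is all that is needed.

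\emph{The general position part.} Let $G$ be the complete multipartite graph whose parts are sets $A_2,A_3,\dots,A_a$ with $|A_k|=k$, together with $n-\frac{a(a+1)}{2}+1$ singleton parts; the hypothesis $n\ge\frac{a(a+1)}{2}$ ensures there is at least one singleton, and the size of $G$ is $\binom n2-\sum_{k=2}^a\binom k2=\binom n2-\binom{a+1}3=\binom n2-\frac{(a-1)a(a+1)}{6}$. It remains to prove $\chi_{\gp}(G)=a$. The only structural input is the easy multipartite generalisation of the fact quoted in the excerpt for Tur\'an graphs: in a complete multipartite graph with some part of size at least two, a vertex set is in general position if and only if it lies inside one part or is a transversal of the parts (equivalently, a clique) --- two vertices of a common part are at distance two, and every vertex of another part is then an internal vertex of a shortest path joining them. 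For the upper bound, fix an ordering of the vertices within each part and let colour class $i$ (for $1\le i\le a$) consist of the $i$-th vertex of every part that has at least $i$ vertices; each class is a transversal, hence a clique, hence a general position set, every vertex is coloured, and only the colours $1,\dots,a$ occur, so $\chi_{\gp}(G)\le a$. For the lower bound I would prove, by induction on $a$, that \emph{every} complete multipartite graph with parts of sizes $2,3,\dots,a$ and at least one further singleton has $\gp$-chromatic number at least $a$. The base case $a=2$ is $K_n^-$: its whole vertex set fails to be a general position set (the two endpoints of the missing edge share a neighbour), while deleting one such endpoint leaves a clique, so $\chi_{\gp}=2$. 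For the step, look at the largest part $A_a$ in a given $\gp$-colouring. If some colour class contains two vertices of $A_a$, that class lies entirely inside $A_a$; restricting the remaining classes to $V(G)\setminus A_a$ then gives a $\gp$-colouring of the complete multipartite graph with parts of sizes $2,\dots,a-1$ and still at least one singleton --- the characterisation above is preserved by deleting the part $A_a$ --- which by induction needs at least $a-1$ colours, so at least $a$ colours are used altogether. Otherwise every colour class meets $A_a$ in at most one vertex, forcing at least $|A_a|=a$ colours. In either case $\chi_{\gp}(G)\ge a$.

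\emph{The monophonic part.} Assume $a\ge 3$ (for $a=2$ one may take $K_n^-$, with $\chi_{\mono}=2$). Let $G$ be obtained from $K_n$ by choosing a set $W$ of $2a-1$ vertices and deleting exactly the edges of $\overline{C_{2a-1}}$ inside $W$; equivalently, $G[W]$ is a cycle $C_{2a-1}$ and every vertex outside $W$ is adjacent to all other vertices (there is at least one such vertex since $n\ge 2a$). The size of $G$ is $\binom n2-\bigl(\binom{2a-1}2-(2a-1)\bigr)=\binom n2-(a-2)(2a-1)$, so it remains to prove $\chi_{\mono}(G)=a$. Two observations do this. First, every induced path of $G[W]=C_{2a-1}$ is again an induced path of $G$, because adjacency among vertices of $W$ is the same in $G$ as in the induced subgraph $G[W]$. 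Second, $\mono(C_m)=2$ for all $m\ge 4$ --- this is the fact underlying $\chi_{\mono}(C_m)=\lceil m/2\rceil$ in the excerpt, and it holds because any three vertices of such a cycle lie on a common induced subpath. Combining the two, no monophonic position set of $G$ contains three vertices of $W$, so covering the $2a-1$ vertices of $W$ needs at least $\lceil(2a-1)/2\rceil=a$ colour classes, i.e.\ $\chi_{\mono}(G)\ge a$. For the matching upper bound, split the cycle $w_0w_1\cdots w_{2a-2}$ into the $a-1$ pairs $\{w_0,w_1\},\{w_2,w_3\},\dots,\{w_{2a-4},w_{2a-3}\}$ and the singleton $\{w_{2a-2}\}$; each pair is an edge of $C_{2a-1}$, and adjoining all vertices outside $W$ to the first pair produces a clique. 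All $a$ colour classes are then monophonic position sets, so $\chi_{\mono}(G)\le a$.

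\emph{Where the work is.} The edge counts and both upper bounds are routine once the graphs are written down; the two lower bounds carry the content. In the $\gp$ case the point to check carefully is that the description of general position sets in a complete multipartite graph is stable under deleting an entire part, so that a ``within-a-part'' colour class can be peeled off and the induction closes. In the $\mono$ case the essential fact is that an odd cycle of length at least five has monophonic position number $2$, which is exactly what limits each colour class to at most two of the $2a-1$ cycle vertices and so pins $\chi_{\mono}$ down to $a$ rather than $a-1$. Finally I would check the boundary cases --- the value $a=2$ in the $\mono$ statement, and $n$ equal to its smallest admissible value --- and confirm that the restricted colouring used in the $\gp$ induction really is a colouring of the correctly parametrised smaller graph.
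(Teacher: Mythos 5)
Your proposal is correct and follows essentially the same route as the paper: the same two constructions (the complete graph minus disjoint cliques $K_2,\dots,K_a$, i.e.\ your complete multipartite graph with parts of sizes $2,\dots,a$ plus singletons, and the join $C_{2a-1}\vee K_{n-2a+1}$), the same inductive peeling of the largest part via the characterisation of general position sets in complete multipartite graphs, and the same ``at most two cycle vertices per colour class'' argument for the monophonic bound.
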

	\begin{proof}
		Trivially $K_n^-$ is extremal for $\gp $- or $\mono $-chromatic number $a = 2$. For $n \geq \frac{a(a+1)}{2}$, consider the graph $H(n,a)$ formed by deleting the edges of a disjoint union $K_2 \dot \cup K_3 \dot \cup \cdots \dot \cup K_a$ from $K_n$. The graph $H(n,a)$ has the size given in the statement of the result and we claim that it has $\gp $-chromatic number $a$. We prove this by induction. The base case for $a = 2$ holds as $H(n,2) = K_n^-$. For $a \geq 3$, suppose that the result is true for $H(n,a-1)$ consider $H(n,a)$. We can decompose $H(n,a)$ into the join of $H(n,a-1)$ with the empty graph $aK_1$. The vertex set of $aK_1$ is in general position, but no general position set contains two or more vertices from $aK_1$ and a vertex outside of $aK_1$. Colouring $aK_1$ red and the vertices of $H(n,a-1)$ gives a $\gp $-colouring of $H(n,a)$ with $a$ colours. If there is any colouring of $H(n,a)$ with fewer than $a$ colours, then there must be two vertices of $aK_1$ with the same colour, say red. Then no vertices outside of $aK_1$ can be coloured red, so we can assume that all vertices of $aK_1$ are red, and a further $a-1$ colours are required to colour $H(n,a-1)$, so that $a$ colours is best possible. The result follows by induction.
		
		For the $\mono $-chromatic number and $a \geq 3$, consider the join of a cycle $C_{2a-1}$ with a clique $K_{n-2a+1}$. Any colour class of an $\mono $-colouring can contain at most two vertices of the cycle $C_{2a-1}$ and so $\chi _{\mono }(C_{2a-1} \vee K_{n-2a+1}) \geq a$. Conversely, choosing the first $a-1$ colour classes to be two vertices of the cycle and the final class to be the clique induced by the $K_{n-2a+1}$ and the remaining vertex of the cycle yields an $\mono $-colouring.
	\end{proof}
	We conjecture that for $a \geq 3$ the construction for $\gp $-chromatic number $a$ is extremal, and that the construction for $\mono $-chromatic number $a$ is extremal for sufficiently large $n$.

	\section{Position colouring of some graph classes}~\label{sec:classes}
	
	In this section we determine the $\gp $-chromatic numbers of some graph classes that have been investigated in the general position problem, namely complete multipartite graphs, line graphs of complete graphs and Kneser graphs. See~\cite{ghorbani-2019} for more details of the general position sets of such graphs.

	\begin{theorem}\label{thm:multipartite}
		The $\gp $-chromatic number of the complete multipartite graph $K_{n_1,n_2,\dots,n_r}$, where $r \geq 1$ and $n_1 \geq n_2 \geq \dots \geq n_r$, is given by \[\chi _{\gp}(K_{n_1,n_2,\dots,n_r}) = \min \{ r,r-i+n_{r-i+1}: 1 \leq i \leq r\} .\]
	\end{theorem}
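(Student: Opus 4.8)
The plan is to prove the formula by establishing matching upper and lower bounds. Throughout, write $K = K_{n_1,\dots,n_r}$ with partite sets $U_1,\dots,U_r$ where $|U_j| = n_j$ and $n_1 \ge n_2 \ge \dots \ge n_r$. The starting point is the structural fact (from~\cite{extremal position}, also used in the Tur\'an-graph theorem above) that a subset $S$ of $V(K)$ is in general position if and only if either $S$ is contained in a single part $U_j$ (an independent set), or $S$ meets each part it touches in exactly one vertex (a clique). So a $\gp$-colouring of $K$ is exactly a partition of $V(K)$ into classes each of which is ``monochromatic in one part'' or ``a partial transversal of the parts.'' The diameter of $K$ is at most $2$ (it is exactly $2$ unless $K$ is complete), so Theorem~\ref{thm:diam three} already gives $\chi_{\gp}(K) \le \chi(K) = r$; this handles the $r$ term in the minimum. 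It remains to analyse the quantities $n_{r-i+1}+i-1$.

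For the upper bound I would, for each fixed $i$ with $1 \le i \le r$, exhibit a $\gp$-colouring with $n_{r-i+1}+i-1$ colours. The idea: the $i-1$ smallest parts $U_{r-i+2},\dots,U_r$ together with $U_{r-i+1}$ are the ``small'' parts, and every part $U_j$ with $j \le r-i+1$ has size $n_j \ge n_{r-i+1}$. Use $n_{r-i+1}$ colour classes, each a transversal that picks one vertex from each of the ``large'' parts $U_1,\dots,U_{r-i+1}$ — this is possible since each such part has at least $n_{r-i+1}$ vertices, so we can distribute their vertices among $n_{r-i+1}$ transversal classes (allowing some classes to miss some parts; partial transversals are still in general position). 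This uses up all vertices of $U_1,\dots,U_{r-i+1}$. Then colour each of the remaining small parts $U_{r-i+2},\dots,U_r$ with its own new colour (a monochromatic independent set), costing $i-1$ more colours. Total: $n_{r-i+1}+i-1$. Taking the best $i$ gives $\chi_{\gp}(K) \le \min\{r,\ n_{r-i+1}+i-1 : 1 \le i \le r\}$.

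For the lower bound, let $\rho$ be an optimal $\gp$-colouring and suppose it uses $k$ colours; I must show $k \ge n_{r-i+1}+i-1$ for some $i$, equivalently $k \ge \min_i(n_{r-i+1}+i-1)$, OR $k \ge r$ — but more cleanly, I'd show $k \ge \min\{r, n_{r-i+1}+i-1 : 1\le i\le r\}$ directly. Classify the colour classes: say a class is of \emph{clique type} if it is a partial transversal (meets every part it touches once) and of \emph{part type} if it lies inside one part. Let $t$ be the number of part-type classes. Each part-type class lies in a single $U_j$; since there are only $r$ parts, if $t \ge r$ we're done (as $k \ge t \ge r$), so assume $t \le r-1$. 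The key observation is: consider the part of smallest index among those \emph{not} designated by some part-type class to be ``fully absorbed'' — more precisely, among the $r-t$ parts containing no part-type class (or only partially covered), the largest of these parts, whose size is at least $n_{r-t}$ by the ordering, must have all its vertices lying in clique-type classes, and a clique-type class contains at most one vertex of it; hence $k - t \ge n_{r-t}$, i.e. $k \ge n_{r-t} + t = n_{r-(t+1)+1} + (t+1) - 1$, which is the formula's value at $i = t+1$. Combined with $t \le r-1$ so that $i = t+1 \le r$ is a valid index, this gives $k \ge \min\{r, n_{r-i+1}+i-1 : 1\le i\le r\}$, completing the proof.

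The main obstacle I anticipate is making the lower-bound classification airtight: a part-type class is contained in a part but need not cover it, and a clique-type class that ``touches'' a part contributes one vertex to it, so I must carefully count, for each part $U_j$, how its $n_j$ vertices are split between the (at most one) part-type class inside $U_j$ and the clique-type classes, and argue that choosing $t$ = number of parts hosting a part-type class and focusing on the largest ``unhosted'' part (size $\ge n_{r-t}$) forces enough clique-type classes. One must also handle the degenerate cases $r=1$ (where $K = \overline{K_{n_1}}$ and $\chi_{\gp} = 1 = \min\{1, n_1\}$ when $n_1 \ge 1$ — wait, for $r=1$ the formula gives $\min\{1, n_1\} = 1$, correct since an empty graph needs one colour) and the complete graph case $n_1 = \dots = n_r = 1$ (formula gives $\min\{r,\dots\} $ with $i=1$ term $n_r = 1$, correct since $\chi_{\gp}(K_r) = 1$). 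These edge cases should be checked explicitly at the end.
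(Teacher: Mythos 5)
Your overall strategy is genuinely different from the paper's. The paper observes that every general position set of a complete multipartite graph is a clique or a stable set (and conversely), concludes that $\chi_{\gp}$ coincides with the cochromatic number $\z$ for these graphs, and then simply cites Gimbel's formula for the cochromatic number of complete multipartite graphs. You establish the same structural fact but then attempt to derive the numerical formula from scratch with matching upper and lower bounds; that is a legitimate and more self-contained plan, amounting to a direct proof of Gimbel's formula.

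The execution, however, has a genuine gap, concentrated exactly where the work is. In the upper bound you form $n_{r-i+1}$ transversal classes and claim they ``use up all vertices of $U_1,\dots,U_{r-i+1}$.'' They cannot: a clique meets each part in at most one vertex, so $n_{r-i+1}$ clique classes absorb at most $n_{r-i+1}$ vertices from each part, whereas $U_1$ has $n_1\geq n_{r-i+1}$ vertices, with strict inequality in general. (For $K_{4,2,1}$ and $i=1$ your construction asks a single clique to cover all seven vertices.) The construction works only with the roles reversed: give the $i-1$ \emph{largest} parts their own independent colour classes and cover the remaining parts, each of size at most $n_i$, by $n_i$ transversals, for a total of $n_i+i-1$ colours. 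The lower bound contains the mirror image of the same slip: if $t$ parts host a part-type class, the largest part that does not host one has size at least $n_{t+1}$, not $n_{r-t}$ (take $r=3$, $t=2$ with $U_1,U_2$ hosted: the only unhosted part has size $n_3$, which need not be $\geq n_1$). With these corrections your argument yields $\min\{r,\,n_i+i-1 : 1\leq i\leq r\}$ under the stated ordering $n_1\geq\cdots\geq n_r$, which is \emph{not} the displayed expression $\min\{r,\,n_{r-i+1}+i-1\}$: for $K_{4,2,1}$ the displayed formula evaluates to $\min\{3,1,3,6\}=1$, while three colours are clearly necessary. In other words, the theorem as printed is mis-indexed (it is correct if the parts are listed in increasing order), and your proof inherited the mis-indexing rather than detecting it. With the indices repaired, your two-sided argument is sound.
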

	\begin{proof}
		Label the partite sets $V_1,V_2,\dots ,V_r$, where $|V_i| = n_i$ for $1 \leq i \leq r$. Suppose that $S$ is a general position set of $K_{n_1,n_2,\dots,n_r}$ that contains two vertices $x,y$ of a partite set $V_i$; then $S$ cannot contain a vertex $z$ from $V_j$ for $j \not = i$, for $x,z,y$ is a shortest path. It follows that every general position set of $K_{n_1,n_2,\dots,n_r}$ is either a clique or an independent set; conversely, any clique or independent set of $K_{n_1,n_2,\dots,n_r}$ is in general position, so that the $\gp $-chromatic number equals the cochromatic number for such graphs. The result now follows from Theorem 2.3 of~\cite{Gim}. 
	\end{proof}
	
	It was shown in~\cite{ghorbani-2019} that the general position sets of $L(K_n)$ correspond to an induced disjoint union of stars and triangles in $K_n$. Thus $\gp (L(K_n)) = n$ if $3|n$ and $\gp (L(K_n)) = n-1$ otherwise.
	\begin{theorem}
		For $n \geq 3$, the $\gp $-chromatic number of $L(K_n)$ is
		\[ \chi _{\gp }(L(K_n)) =\begin{cases}
			\frac{n}{2}+1, & \text{ if } n \in \{ 6,12\}  \\
			\frac{n+1}{2}, & \text{ if } n \equiv 1,5 \pmod 6,  \\
			\frac{n}{2}, & \text{ if } n \equiv 2,4 \pmod 6, \text{ or } n \equiv 0 \pmod 6 \text{ and } n \geq 18  \\
			\frac{n-1}{2}, & \text{ if } n \equiv 3 \pmod 6.  \\
		\end{cases}\]
	\end{theorem}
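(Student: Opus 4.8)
The plan is to translate a $\gp$-colouring of $L(K_n)$ into a covering of the edge set of $K_n$ by subgraphs of a restricted type, count how many such pieces are needed, and match this against an explicit construction. Recall from~\cite{ghorbani-2019} that a set of edges of $K_n$ is in general position in $L(K_n)$ precisely when it induces an independent union of stars and triangles in $K_n$; call such an edge set a \emph{star-triangle packing}. Thus $\chi_{\gp}(L(K_n))$ equals the minimum number of star-triangle packings needed to partition $E(K_n)$. First I would establish the lower bound. Since $K_n$ has $\binom{n}{2}$ edges and a star-triangle packing on $n$ vertices has at most $n$ edges (with $n$ attained only when $3 \mid n$, by a partition into triangles), we get $\chi_{\gp}(L(K_n)) \geq \lceil \binom{n}{2}/n \rceil = \lceil (n-1)/2 \rceil$, which already gives the right answer's order of magnitude; the subtlety is the small additive corrections.

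Next I would pin down the construction. When $n$ is odd, $K_n$ decomposes into $(n-1)/2$ Hamilton cycles; each Hamilton cycle $C_n$ is itself \emph{not} a star-triangle packing, but can be split into a near-perfect matching plus a leftover edge, or better, handled directly: a Hamilton cycle on an odd number of vertices is a union of two matchings plus one edge — so I would instead decompose $K_n$ into matchings via a proper edge colouring ($n$ colour classes for $n$ odd, $n-1$ for $n$ even), then greedily merge matching classes together, since the disjoint union of a matching with another matching across disjoint vertex sets is still a star-triangle packing, and more importantly a matching is trivially such a packing. Merging $k$ matchings on $n$ vertices into one packing is possible whenever the total number of edges is at most $n$; a careful accounting of the $\binom{n}{2} = \frac{n(n-1)}{2}$ edges into packings of size close to $n$ gives $\lceil (n-1)/2 \rceil$ in the generic case, and using triangle factors when $3 \mid n$ shaves off one more, giving $(n-1)/2$ when $n \equiv 3 \pmod 6$ and $n/2$ when $n \equiv 0 \pmod 6$ is large. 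The residues $1,5 \pmod 6$ force the $\frac{n+1}{2}$ value because $n$ is odd and $3 \nmid n$, so no packing reaches size $n$ and the ceiling is genuinely $\frac{n+1}{2}$; the residues $2,4 \pmod 6$ give $n/2$ since then $\binom n2 = \frac{n(n-1)}{2}$ with $n$ even is exactly divisible in the right way.

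The main obstacle will be the exceptional cases $n \in \{6,12\}$ and, more generally, proving the matching lower bound $\chi_{\gp}(L(K_n)) \geq n/2$ when $n \equiv 0 \pmod 6$ is small versus $\geq n/2 + 1$ at $n=6,12$. Here the naive edge count $\lceil \binom n2 / n\rceil = (n-1)/2$ (rounding up to $n/2$) is not obviously tight, because achieving packings of size exactly $n$ simultaneously for all but one or two classes requires an actual decomposition of $K_n$ minus a small remainder into triangle factors (Steiner-type / Kirkman-type configurations), and such decompositions fail to exist in exactly the small cases $n = 6, 12$ — this is where a Kirkman triple system / resolvable design obstruction enters. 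So the crux is: (i) show that for $n \equiv 0 \pmod 6$, $n \geq 18$, one can partition $E(K_n)$ into $n/2 - 1$ triangle factors plus one extra matching-like class — i.e. the near-resolvable design exists; and (ii) show that for $n = 6$ and $n = 12$ no packing into $n/2$ classes exists, forcing $n/2 + 1$. For (i) I would invoke known resolvable/near-Kirkman triple system results; for (ii) a short direct counting or case argument, using that a star-triangle packing on $6$ (resp.\ $12$) vertices has at most $6$ (resp.\ $12$) edges but the forced leftover parity prevents three (resp.\ six) of them from simultaneously hitting size $n$. The remaining residue classes are comparatively routine once the edge-colouring-plus-merging scheme is set up and the triangle-factor count is carried through.
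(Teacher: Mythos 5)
Your reduction to partitioning $E(K_n)$ into independent unions of stars and triangles, your counting lower bound $\left\lceil \binom{n}{2}/\gp(L(K_n))\right\rceil$ with the parity refinements, and your use of Kirkman and nearly Kirkman triple systems for $n \equiv 3 \pmod 6$ and $n \equiv 0 \pmod 6$, $n\geq 18$, all match the paper. But there is a genuine gap in your upper-bound constructions for the residues $1,2,4,5 \pmod 6$. The proposed scheme --- properly edge-colour $K_n$ into (near-)perfect matchings and then ``merge'' colour classes --- does not work: two matchings of $K_n$ with many edges necessarily share vertices, and their union is a disjoint union of paths and even cycles, which is \emph{not} an independent union of stars and triangles once any path has three or more edges (already $P_2 \cup P_2$ glued into a $P_4$, or a $C_4$, fails). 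Your escape clause ``matchings across disjoint vertex sets'' cannot be arranged while keeping only $O(n)$ colour classes. The paper instead builds these cases from a Kirkman system on a nearby complete graph: for $n\equiv 4$, Kirkman on $K_{n-1}$ plus one star at the deleted vertex; for $n\equiv 5$, Kirkman on $K_{n-2}$ plus two stars; for $n\equiv 1,2$, Kirkman on $K_{n+2}$ or $K_{n+1}$ with the added vertices deleted from every parallel class (each class then remains a union of triangles and single edges). Some such vertex-addition/deletion device is needed; without it the matching counts simply do not close.

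The second gap is the exceptional pair $n\in\{6,12\}$. You propose ``a short direct counting or case argument,'' but ruling out $n/2$ colour classes requires excluding \emph{all} partitions of $E(K_{12})$ into six star-triangle packings, not merely the nonexistence of a nearly Kirkman triple system of order $12$ (classes need not be triangle factors or perfect matchings; they may mix stars of various sizes with triangles). The paper does not attempt a hand argument here: it cites a computational verification for $n=6,12$ and the known existence of nearly Kirkman triple systems only for $6\mid n$, $n\geq 18$. If you want a self-contained proof for $n=12$ you would need substantially more than a parity count; as written, step (ii) of your plan is not a proof. Finally, a small slip: your initial lower bound divides by $n$ rather than by $\gp(L(K_n))=n-1$ when $3\nmid n$; you recover the correct values later, but the first displayed bound $\lceil (n-1)/2\rceil$ is not the one you end up using for $n\equiv 1,2,4,5 \pmod 6$.
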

	\begin{proof}
		There are ${n \choose 2}$ vertices in $L(K_n)$, so by Lemma~\ref{lem:ni/xi bound} we have $\chi _{\gp }(L(K_n)) \geq \frac{n-1}{2}$ if $3|n$ and $\chi _{\gp }(L(K_n)) \geq \frac{n}{2}$ otherwise. Parity considerations allow us to improve this to $\chi _{\gp }(L(K_n)) \geq \frac{n}{2}$  for $n \equiv 0 \pmod 6$ and $\chi _{\gp }(L(K_n)) \geq \frac{n+1}{2}$ if $n \equiv 1,5 \pmod 6$. 
		
		In~\cite{Kirkmansystems} it is shown that for any $n \equiv 3 \pmod 6$ there exists a Kirkman triple system, also known as a resolvable Steiner triple system, i.e. a partition of the 2-subsets of $[n]$ into $\frac{n-1}{2}$ collections of $\frac{n}{3}$ disjoint triangles. It follows immediately that $\chi _{\gp }(L(K_n)) = \frac{n-1}{2}$ for $n \equiv 3 \pmod 6$. 
		
		If $n \equiv 4 \pmod 6$, let $x$ be a vertex of $K_n$ and colour the edges of $L(K_n-x)$ with a Kirkman triple system using $\frac{n-2}{2}$ colours, and colour the edges incident to $x$ with one new colour, making $\frac{n}{2}$ colours in total. If $n \equiv 5 \pmod 6$, let $x,y$ be vertices of $K_n$ and colour $K_n-\{ x,y\}$ with $\frac{n-3}{2}$ colours. Then colour the edges between $y$ and $K_n-\{ x,y\} $ with one new colour and the edges from $x$ to $K_n-x$ with another new colour, giving $\frac{n+1}{2}$ colours. 
		
		If $n \equiv 2 \pmod 6$, take a Kirkman triple system on $K_{n+1}$ and then delete a single vertex $x$ from each of the $\frac{n}{2}$ collections of triangles. This leaves a decomposition of $E(K_n)$ into $\frac{n}{2}$ collections of $\frac{n-2}{3}$ disjoint triangles and a $P_2$. Similarly, for $n \equiv 1 \pmod 6$, add two new vertices $x,y$ to $K_n$ to get a clique $K_{n+2}$. Take a Kirkman triple system on $K_{n+2}$ and delete $x,y$ from each of the $\frac{n+1}{2}$ collections of triangles; again, this leaves a collection of general position sets, each of which is either a disjoint union of triangles or a disjoint union of triangles and a copy of $2K_2$.
		
		This leaves the case $n \equiv 0 \pmod 6$. Delete three vertices $x,y,z$ to get a complete graph $K_{n-3}$ and take a Kirkman triple system on $K_{n-3}$, which uses $\frac{n-4}{2}$ colours. Now in $K_n$ colour the triangle on $x,y,z$ with one of the previously used colours. Then use a new colour for the edges from $x$ to $K_n-\{x,y,z\} $, a second new colour for the edges from $y$ to $K_n-\{ y,z\} $ and a third new colour for the edges from $z$ to $K_n-\{ z\}$. This uses $\frac{n}{2}+1$ colours, which is one greater than the lower bound. It can be checked~\cite{erskine} that $\frac{n}{2}$ colours is impossible for $n \in \{ 6,12\} $. However, it is possible to meet the lower bound $\frac{n}{2}$ when $n \geq 18$ and $6|n$. For any such value there exists a nearly Kirkman triple system (see pages 351 and 352 of~\cite{triplesystems}), which is equivalent to a decomposition of $E(K_n)$ into $\frac{n}{2}-1$ collections of disjoint triangles and one copy of $3K_2$, which is also a general position set.  
	\end{proof}

	The vertex set of the Kneser graph $K(n,2)$ consists of all 2-subsets of $[n]$, and two subsets $\{ a,b\} $ and $\{ a',b'\} $ are adjacent if and only if they are disjoint. The argument in~\cite{ghorbani-2019} shows that in the Kneser graph $K(n,2)$, there are four types of maximal general position set: i) an independent set of three vertices, induced by the 2-subsets of any 3-subset of $[n]$, ii) a copy of $3K_2$ induced by the 2-subsets of any 4-subset, iii) a clique (i.e.\ a collection of disjoint 2-subsets) of order $\left \lfloor \frac{n}{2}\right \rfloor $ and iv) an independent set of $n-1$ vertices of the form given by the Erd\H{o}s-Ko-Rado Theorem, in which all the 2-subsets have a common element. Hence $\gp (K(n,2)) = 6$ for $n \in \{ 5,6\}$ and $\gp (K(n,2)) = n-1$ for $n \geq 7$.
	
	\begin{theorem}
		For $n \geq 5$, $\chi _{\gp }(K(n,2)) = n-3$.
	\end{theorem}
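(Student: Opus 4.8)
The plan is to identify $V(K(n,2))$ with $E(K_n)$ and to use the classification of maximal general position sets of $K(n,2)$ recalled just above: under this identification every general position set of $K(n,2)$ is a subset of one of the six edges of a $K_4$ in $K_n$, a matching of $K_n$, or a full star $E(K_{1,n-1})$ of $K_n$ (the three-vertex independent sets among the maximal types listed are subsets of the $3K_2$'s, and the Erd\H{o}s--Ko--Rado sets are exactly the stars). Thus a $\gp$-colouring of $K(n,2)$ is precisely a partition of $E(K_n)$ into classes of these three kinds, and $\chi_{\gp}(K(n,2))$ is the least number of parts in such a partition.

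For the upper bound I would label $V(K_n)=\{1,\dots,n\}$ and start from $E(K_n)=\bigsqcup_{i=1}^{n-1}F_i$, where $F_i$ is the set of edges from $i$ to a larger vertex. Here $F_n=\emptyset$ and $F_{n-3}\cup F_{n-2}\cup F_{n-1}$ is exactly the edge set of the $K_4$ on $\{n-3,n-2,n-1,n\}$, so merging these three into one class produces a $\gp$-colouring with $(n-4)+1=n-3$ colours; hence $\chi_{\gp}(K(n,2))\le n-3$. (For $n=5$ this is the $2$-colouring of the Petersen graph in Figure~\ref{fig:Petersen}.)

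For the lower bound I would take an arbitrary $\gp$-colouring with $k$ classes and fix for each class a witnessing type, one of ``star with centre $v$'', ``matching'', or ``$K_4$ on a fixed $4$-set''. Let $H$ be the set of centres used, $h=|H|$, $W=[n]\setminus H$, $m=|W|=n-h$, and let $a,b$ be the numbers of matching and $K_4$ classes; then $k\ge h+a+b$, since each vertex of $H$ is the centre of some class. A star class centred at $v\in H$ consists of edges through $v$, none inside $W$, so every edge of $K_n$ lying in $W$ is covered by a matching or a $K_4$ class. Counting, for each $v\in W$, the $m-1$ edges at $v$ inside $W$ --- a matching class meets $v$ in at most one of them, a $K_4$ class in at most three --- and summing over $v\in W$ (each within-$W$ edge counted twice; a matching restricted to $W$ has at most $\lfloor m/2\rfloor$ edges, a $K_4$ at most six), I get
\[
m(m-1)\ \le\ am+12b .
\]
Assuming $k\le n-4$ gives $a+b\le n-4-h=m-4$. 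The cases $m\le1$ and $2\le m\le4$ are immediately contradictory (respectively $k\ge h\ge n-1$, and $\binom{m}{2}\ge1$ within-$W$ edges to be covered by no class). For $m\ge5$, the displayed inequality yields $a\ge m-1-\tfrac{12b}{m}$, and combining with $a+b\le m-4$ forces $b\cdot\tfrac{12-m}{m}\ge3$; hence $m<12$ and $b\ge\tfrac{3m}{12-m}$, so $\tfrac{3m}{12-m}\le m-4$, which rearranges to $m^2-13m+48\le0$ --- impossible, since $13^2-4\cdot48<0$. Therefore $k\ge n-3$, and with the upper bound $\chi_{\gp}(K(n,2))=n-3$.

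The whole difficulty lies in the lower bound, and specifically in the incidence inequality $m(m-1)\le am+12b$ together with the elimination of the small ranges $m\le4$ and $5\le m\le11$; everything else is routine bookkeeping. The two points that need a little care are committing to a single type for each colour class, and checking that a $K_4$ class --- whose $4$-set need not lie inside $W$ --- still contributes at most six edges, hence at most $12$ units, to the incidence count over the vertices of $W$.
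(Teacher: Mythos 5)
Your proof is correct, and the lower bound takes a genuinely different route from the paper's. Both arguments rest on the same external input, namely the classification from~\cite{ghorbani-2019} of the (maximal) general position sets of $K(n,2)$ as subsets of stars, matchings and $K_4$-edge-sets (with the three-element triangles absorbed into the $K_4$ type, as you note), and your upper bound construction is essentially the paper's: one $K_4$ colour class plus $n-4$ sub-stars. For the lower bound, however, the paper proceeds quite differently: it disposes of $n=6,7$ via the bound $\chi_{\gp}\ge\lceil \binom{n}{2}/\gp(K(n,2))\rceil$, handles $n=8$ separately using $\chi(K(8,2))=6$ (the Lov\'asz--Kneser theorem), then shows by a size count that for $n\ge 9$ some optimal colouring has a colour class contained in a star, and finally inducts on $n$ by recolouring all pairs containing the symbol $n$. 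You instead run a single, uniform double-counting argument: after committing each colour class to a witness type, you bound the number of incidences between vertices of $W=[n]\setminus H$ and within-$W$ edges by $am+12b$, combine this with $a+b\le m-4$ (which follows from the assumption $k\le n-4$ together with $k\ge h+a+b$), and arrive at $m^2-13m+48\le 0$, impossible since the discriminant is negative. This buys a proof with no induction, no appeal to the chromatic number of Kneser graphs, and no separate treatment of small $n$ beyond the trivial ranges $m\le 4$; the only points requiring care --- fixing one type per class, and checking that a $K_4$ class whose $4$-set is not contained in $W$ still contributes at most $12$ incidences --- are both handled correctly.
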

	\begin{proof}
		Figure~\ref{fig:Petersen} shows that $\chi _{\gp }(K(5,2)) = 2$. It follows from Lemma~\ref{lem:ni/xi bound} that \newline $\chi _{\gp }(K(n,2)) \geq \left \lceil \frac{n}{2} \right \rceil $ for $n \geq 6$. That $\chi _{\gp }(K(n,2)) \leq n-3$ follows by colouring the six 2-subsets of $[4]$ red, then by dividing the remaining vertices into $n-4$ independent sets, e.g. all those 2-subsets containing a 5, the 2-subsets containing a 6, etc. 
		
		Our bounds show that $\chi _{\gp }(K(n,2)) = n-3$ holds for $n = 6,7$, and that $4 \leq \chi _{\gp }(K(8,2)) \leq 5$ and $5 \leq \chi _{\gp }(K(9,2)) \leq 6$. Equality $\chi _{\gp }(K(8,2)) = 4$ can hold only if we can partition the vertex set into independent sets of seven vertices. However, this is impossible, since it is known that $\chi (K(n,2)) = n-2$ (see~\cite{Lovasz}), so $\chi _{\gp }(K(8,2)) = 5$.
		
		We claim that for $n \geq 9$ we can choose an optimal $\gp $-colouring of $K(n,2)$ that contains a colour class that is a subset of an independent set of type iv). Otherwise, all colour classes contain at most $\max \{ 6,\left \lfloor \frac{n}{2} \right \rfloor \} $ vertices and for $n \geq 10$ we have $(n-3)\max \{ 6,\left \lfloor \frac{n}{2} \right \rfloor \} < {n \choose 2}$. If $\chi _{\gp }(K(9,2)) = 5$ and if there is no colour class that is a subset of the form iv), then we can cover at most $6 \times 5$ vertices of $K(9,2)$ using five colours, so whether $\chi _{\gp }(K(9,2))$ is five or six there must be a colour class of the form claimed. 
		
		We now proceed by induction starting at $r = 8$. Assume that $\chi _{\gp }(K(r,2)) = r-3$ for $8 \leq r < n$. Take an optimal $\gp $-colouring of $K(n,2)$ with the red vertices an independent set all containing the symbol $n$. Now we can recolour any vertex of $K(n,2)$ that contains the symbol $n$ red and still have an optimal $\gp $-colouring. The colouring of the vertices not containing $n$ induces a $\gp $-colouring of $K(n-1,2)$, and so by induction there must be $\geq n-4$ further colours, establishing the theorem.
	\end{proof}

	\section{Small and large position colouring numbers}\label{sec:small&large}
	
	In this section we briefly discuss characterisations of graphs with very large or small values of the position colouring numbers. We dealt with the extreme cases of position colouring numbers equal to one or $n$ in Lemma~\ref{lem:cliques}. We start with characterising some graphs with large general position numbers relative to the order. Lemma~\ref{lem:ni/xi bound} implies that for any fixed $a$, there are only finitely many graphs with $\chi _{\pi }(G) = n-a$ if $\pi $ is one of $\gp $, $\mono $ or $\mu $. For independent $\gp $-colourings we use the following result that follows by dividing a shortest path of length $\diam (G)$ into independent sets.

	\begin{lemma}\label{gpi diam bound}
		For any graph $G$, if $\pi $ is one of $\gp $ or $\mono $, then
		\[ \chi _{\pi _i}(G) \leq n-\left \lfloor \frac{\diam ^*(G)+1}{2} \right \rfloor  .\] 
	\end{lemma}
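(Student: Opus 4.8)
The plan is to produce an explicit independent general position colouring using $n - \lfloor (\diam(G)+1)/2\rfloor$ colours, built around a longest geodesic. First I would pick a component $H$ of $G$ realising $\diam(G)$ and let $P = u_0 u_1 \cdots u_d$ be a shortest path in $H$ with $d = \diam(G)$. I would then show how to colour the $d+1$ vertices of $P$ using only $\lceil (d+1)/2 \rceil$ colours while keeping each colour class an independent general position set: pair up $u_{2j}$ with $u_{2j+1}$ for $0 \le j \le \lfloor (d-1)/2 \rfloor$, leaving $u_d$ alone if $d$ is even. Each such pair $\{u_{2j}, u_{2j+1}\}$ is a single edge, hence... wait — an edge is \emph{not} independent. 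So the pairing must instead group \emph{non-adjacent} consecutive-in-index vertices; the right move is to pair $u_{2j}$ with $u_{2j-1}$? Those are also adjacent. The correct observation is that a two-element set $\{u_i, u_j\}$ with $|i-j|\ge 2$ is always an independent general position set (two vertices are trivially in general position, and on a geodesic non-consecutive vertices are non-adjacent), so I would instead pair $u_0$ with $u_2$, $u_1$ with $u_3$? Still need to be careful about which pairs are independent. The clean solution: pair $u_{2j}$ with $u_{2j+2}$ is wrong too. Let me just pair $u_i$ with $u_{i+\lceil (d+1)/2\rceil}$ — i.e.\ split the $d+1$ path vertices into $\lceil (d+1)/2 \rceil$ classes each of size $\le 2$ where the two vertices in a class are at index-distance $\ge \lceil (d+1)/2 \rceil \ge 2$, hence non-adjacent and in general position.

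Next, I would colour every vertex of $G$ \emph{not} on $P$ with its own private colour. This gives a total of $(n - (d+1)) + \lceil (d+1)/2 \rceil = n - (d+1) + \lceil (d+1)/2\rceil = n - \lfloor (d+1)/2 \rfloor$ colours, matching the claimed bound. It remains to check this is a legitimate $\igp$-colouring, i.e.\ that each colour class is both independent and a general position set. Singleton classes are trivially fine. For the doubleton classes $\{u_i, u_{i + \lceil (d+1)/2\rceil}\}$ I need: (a) independence, which holds since the index gap is at least $2$ and two vertices at distance $\ge 2$ on a geodesic are non-adjacent; and (b) the general position property, which is automatic for any two-vertex set. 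Finally I should recall the convention (stated in the preliminaries) that for a disconnected $G$ a set is a $\pi$-set iff its intersection with each component is, so colouring the other components with private colours is consistent and does not increase the count beyond $n - \lfloor(d+1)/2\rfloor$ (the vertices outside $H$ are among the "not on $P$" vertices already counted).

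The only genuine subtlety — and the step I would be most careful about — is the index-pairing: I must ensure that whenever a colour class has two path vertices $u_i, u_j$, their indices differ by at least $2$ so that they are non-adjacent (this is what makes the class \emph{independent}, which is the extra requirement beyond plain $\gp$-colouring and the whole point of the lemma). Partitioning $\{0,1,\dots,d\}$ into $\lceil (d+1)/2\rceil$ blocks of size at most $2$ with each block's two elements at distance $\ge \lceil (d+1)/2 \rceil$ works once $d \ge 1$; for $d \le 1$ the bound is trivial since $\lfloor (d+1)/2\rfloor \le 1$ and any graph has $\chi_{\pi_i}(G) \le n$. One should also note the result for $\mono$ follows verbatim, replacing "shortest path" by "induced path" and "diameter" by "monophonic diameter", since a two-vertex set is trivially in monophonic position and non-consecutive vertices of an induced path are non-adjacent.
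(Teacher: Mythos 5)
Your construction is essentially the paper's intended argument: the paper gives only the one-line justification ``divide a longest geodesic of even order into independent sets,'' and your pairing $u_i \leftrightarrow u_{i+\lceil (d+1)/2\rceil}$ together with singleton colours off the path is a correct implementation of it, since any two-element set is in general (and monophonic) position and the index gap $\lceil (d+1)/2\rceil \ge 2$ guarantees independence whenever $d \ge 2$. The colour count $n-(d+1)+\lceil (d+1)/2\rceil = n-\lfloor (d+1)/2\rfloor$ is right, and the reduction of $\mono$ to the same argument via the monophonic diameter is fine.

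One caveat: your dismissal of the case $d\le 1$ as ``trivial'' is a non sequitur --- when $d=1$ the bound reads $\chi_{\pi_i}(G)\le n-1$, which does not follow from $\chi_{\pi_i}(G)\le n$, and indeed it is \emph{false} for $G=K_n$ ($n\ge 2$), where $\diam(G)=1$ but $\chi_{\gp_i}(K_n)=n$ by the paper's own Lemma~\ref{lem:cliques}. This is really a defect of the lemma as stated (it implicitly needs $\diam(G)\ge 2$, or a longest geodesic with at least one non-adjacent pair), and the paper glosses over it too; but you should not paper over it by calling the case trivial. The intermediate false starts in your write-up (pairing adjacent vertices, etc.) should of course be excised from a final version.
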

	Identifying an endpoint of a path with a vertex of a complete graph shows that Lemma~\ref{gpi diam bound} is tight for all diameters.
	
	\begin{theorem}
		If $G$ has order $n$, then
		\begin{itemize}
			\item $\chi _{\pi }(G) = n-1$ if and only if $G$ is one of $P_2$, $P_3$ or $2K_1$ when $\pi $ is either $\gp $ or $\mono $, 
			\item the only graphs with $\chi _{\gp }(G) = n -2$ are i) graphs with order three that are disjoint unions of cliques, ii) graphs with order four that are not disjoint unions of cliques and iii) $P_5$,
			\item the graphs with $\chi _{\mono }(G) = n-2$ are those with $\chi _{\gp}(G) =n-2$ as well as the cycle $C_5$, and
			\item $\chi _{\gp _i}(G) = n-1$ if and only if $G$ has clique number $n-1$.
		\end{itemize}
		
	\end{theorem}
	\begin{proof}
		The first part follows trivially from Lemma~\ref{lem:ni/xi bound}. Let $\pi $ be either $\gp $ or $\mono $ and suppose that $\chi _{\pi }(G) = n-2$. No graph with order $n \geq 6$ can have $\pi $-chromatic number $n-2$. The result for order three follows by Lemma~\ref{lem:cliques} and, by the same lemma, a graph with order four has $\pi $-chromatic number two if and only if it is not a disjoint union of cliques. A graph with order five and $\pi $-chromatic number three must be connected. We have $\chi _{\gp }(P_5) = \chi _{\mono }(P_5) = \chi _{\mono }(C_5) = 3$, but any other connected graph with order five has a general or monophonic position set of order at least three, and the remaining vertices can be coloured with a single colour, so that these are the only solutions.   
		
		Now suppose that $G$ is a graph with $\gp _i$-chromatic number $n-1$. If $\diam ^*(G) \geq 3$, then by Lemma~\ref{gpi diam bound} at most $n-2$ colours would be needed in a $\gp _i$-colouring of $G$. Hence Theorem~\ref{thm:diam three} shows that if $\chi _{\gp _i}(G) = n-1$, then $\chi _{\gp _i}(G) = \chi (G) = n-1$, and hence $\omega (G) = n-1$. Conversely any graph with clique number $n-1$ obviously has $\gp _i$-chromatic number $n-1$.  
	\end{proof}
	
	At the other extreme, we now discuss graphs with position chromatic number two. If $\chi _{\gp _i}(G) = 2$, then $G$ must be bipartite, and a bipartite graph will have $\chi _{\gp _i}(G) = 2$ if and only if it has diameter two or three and $\chi _{\mono _i}(G) = 2$ if and only if it has monophonic diameter two or three. The vertex set of any graph with $\chi _{\gp }(G) = 2$ or $\chi _{\mono }(G) = 2$ can be divided into two independent unions of cliques. Let us write $A^i = \{ A_1^i,\dots ,A_{r_i}^i\} $, where $i \in \mathbb{Z}_2$, for the two independent unions of cliques. 
	
	\begin{theorem}\label{thm:chromatic no 2}
		A graph $G$ has $\chi _{\gp }(G) = 2$ if and only if $2 \leq \diam ^*(G) \leq 3$ and $V(G)$ can be divided into two independent unions of cliques such that, for $i \in \mathbb{Z}_2$, if a vertex $w$ in $A^i$ has neighbours $u,v$ in two distinct cliques $A_x^{i+1},A_y^{i+1}$, then $d(u,v') = d(u',v) = 2$ for each $u' \in A_x^{i+1}$ and $v' \in A_y^{i+1}$. The analogous result for $\chi _{\mono }(G) = 2$ holds if we replace `shortest path' by `induced path' and `diameter' by `monophonic diameter'. 
	\end{theorem}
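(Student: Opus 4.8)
The plan is to characterise when a two-colouring with each class an independent union of cliques is actually a $\gp $-colouring, i.e. when each class is in general position. So I start from the observation (already established in the discussion preceding the theorem) that $\chi _{\gp }(G) = 2$ forces $V(G)$ to split into two independent unions of cliques $A^0$ and $A^1$; conversely any such partition is a $\gp $-colouring precisely when neither colour class contains three vertices on a common geodesic. The proof will therefore consist of translating ``$A^i$ is in general position'' into the local condition in the statement, and separately observing that $\diam (G) \le 3$ is forced (this follows from Lemma~\ref{lem:diameter bound}, since $\chi _{\gp }(G) = 2$ gives $\lceil (\diam ^*(G)+1)/2\rceil \le 2$, so $\diam (G) \le 3$).

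For the forward direction I would argue as follows. Suppose $\chi _{\gp }(G) = 2$ with colour classes $A^0, A^1$, each an independent union of cliques. Fix $i$ and suppose $w \in A^i$ has neighbours $u \in A_x^{i+1}$ and $v \in A_y^{i+1}$ with $x \ne y$; since $u,v$ lie in different cliques of $A^{i+1}$ they are non-adjacent, so $u \sim w \sim v$ is an induced (hence shortest, as $\diam \le 3$ and $d(u,v) \ge 2$) $u,v$-path showing $d(u,v) = 2$. Now take any $u' \in A_x^{i+1}$; then $u' = u$ or $u' \sim u$. If $u' \sim v$ as well, then $u' \sim v$ and $u' \sim u \sim w \sim v$ would make $\{u,u',w\}$... more carefully: the point is that $w, u', v$ would then lie on a geodesic. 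Indeed if $d(u',v) = 1$ then since $w \sim v$ and $w \sim u'$ (as $u' \in A_x^{i+1}$, $u' \sim u$; but is $u' \sim w$?) — here I need $u' \sim w$, which need not hold. The cleaner route: the offending configuration is three vertices of $A^{i+1}$ on a geodesic. So I should show $d(u',v) = 2$ by ruling out $d(u',v) \le 1$: if $u' = v$ impossible ($x\ne y$); if $u' \sim v$, then $u',v$ adjacent, and since $u' \sim u$ (same clique) we get $u \sim u' \sim v$ but also need a geodesic through $u'$ — instead use $w$: we have $w \sim u$, $w \sim v$; consider the path $u \sim w \sim v$; if additionally $u' \sim v$ then $u', u, v$... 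I will instead directly exhibit a bad geodesic using $w$ as the middle vertex: the real constraint from $A^i$ being in general position is that no geodesic contains three vertices of $A^i$, and from $A^{i+1}$ similarly. Translating: if $d(u',v) = 1$ for some $u' \in A_x^{i+1}$, then since $w$ is adjacent to $v$ and (because $u',v$ adjacent and $u' \in A^{i+1}$ while $v \in A^{i+1}$ in a different clique — contradiction, they'd be adjacent in different cliques of an independent union of cliques, which is forbidden). That is the slick kill: $u' \sim v$ with $u' \in A_x^{i+1}$, $v \in A_y^{i+1}$, $x \ne y$ contradicts $A^{i+1}$ being an independent union of cliques. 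Hence $d(u',v) = 2$, and symmetrically $d(u',v) \ge 2$ always; combined with $w$ being a common neighbour, $d(u',v) = 2$. The same for $v'$.

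For the converse, assume the partition into $A^0, A^1$ with $\diam (G) \le 3$ and the stated neighbour condition; I must show each $A^i$ is in general position. Suppose not: some geodesic $P$ contains three vertices $a, b, c \in A^i$ with $b$ internal, say $d(a,c) = |P|$ and $b$ on $P$. Since $A^i$ is an independent union of cliques, among $a,b,c$ the adjacent pairs lie in a common clique. Case analysis on $d(a,c) \in \{2,3\}$ (it is $\ge 2$ since $a \ne c$ and if $a \sim c$ no internal vertex exists; it is $\le 3$): if $d(a,c) = 2$ then $b$ is the unique internal vertex, $a \sim b \sim c$, $a \not\sim c$; then $a,c$ lie in different cliques of $A^i$, and $b$'s neighbours $a,c$ are in distinct cliques $A^i_x, A^i_y$ of $A^i$ — wait, but $b \in A^i$ too and $b \sim a$, so $a,b$ same clique, i.e. $a \in A^i_x$ where $b \in A^i_x$; similarly $c$, so $b$ sees two cliques of $A^i$, impossible since $b$ is in one of them and an independent union of cliques has no edges between cliques — contradiction immediately. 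If $d(a,c) = 3$, then $P = a \sim p \sim q \sim c$ with $\{p,q\} \subseteq A^{i+1}$ (they're not in $A^i$: if $p \in A^i$ then $a,p$ in a clique so $a \sim p$ fine, but then $a, p$ both in $A^i$ adjacent, and $b$... I need $b$ among $\{p,q\}$? No — $b$ is one of the three chosen $A^i$-vertices on $P$, and $P$ has vertices $a,p,q,c$; the three $A^i$-vertices must be $a, c$ and one of $p, q$, say $p$; but $a \sim p$ forces $a, p$ in one clique of $A^i$, and $p \sim q$... then where does the neighbour condition bite — apply it to $p \in A^i$? No, $p \in A^i$; apply it with roles swapped: $q \in A^{i+1}$ has neighbours $p \in A^i$ and $c \in A^i$; if $p, c$ in distinct cliques of $A^i$ the condition forces $d(p,c) = 2$, contradicting that $p,c$ are at distance 2 on a geodesic of length 3 — fine that's consistent, hmm). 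I see the case $d(a,c)=3$ needs the neighbour condition precisely, and getting the indexing right there is the main obstacle: I expect to need to apply the stated condition to the middle-ish internal vertex of $P$ and derive that two vertices which are at distance $3$ on $P$ are actually at distance $2$, contradicting $P$ being a geodesic. The monophonic analogue is identical with ``induced path'' replacing ``shortest path'' and ``monophonic diameter'' replacing ``diameter'' throughout, using that an induced path has no chords so the same clique/non-adjacency bookkeeping applies verbatim.
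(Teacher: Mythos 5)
Your overall strategy is the same as the paper's: reduce everything to the observation that each colour class must induce an independent union of cliques, that $\diam(G)\le 3$ is forced, and that the only remaining obstruction is a geodesic of length three carrying three vertices of one class. However, as written both directions of your argument have a genuine gap at exactly the point where the real content lies.

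In the forward direction you correctly rule out $d(u',v)\le 1$ (different cliques of an independent union of cliques are non-adjacent), but you then conclude $d(u',v)=2$ ``combined with $w$ being a common neighbour.'' That is false for $u'\ne u$: $w$ is a common neighbour of $u$ and $v$, not of $u'$ and $v$, so all you get is $d(u',v)\le d(u',u)+d(u,w)+d(w,v)=3$. The case $d(u',v)=3$ is precisely the one that must be excluded, and excluding it is where the hypothesis $\chi_{\gp}(G)=2$ is actually used: if $d(u',v)=3$ then $u',u,w,v$ is a geodesic containing the three vertices $u',u,v$ of $A^{i+1}$, contradicting that $A^{i+1}$ is in general position. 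Your argument never invokes the general position property of the colour classes in this direction, so it cannot be complete. In the converse you correctly isolate the critical configuration --- a geodesic $a,p,q,c$ with $a,p,c\in A^i$ and $q\in A^{i+1}$ --- but you then apply the hypothesis to $q$ only to deduce $d(p,c)=2$, observe that this is consistent, and stop (``getting the indexing right there is the main obstacle''). The missing step is that the condition is quantified over the \emph{entire} clique: since $a\sim p$, the vertex $a$ lies in the same clique of $A^i$ as $p$, so the condition applied to $w=q$ with neighbours $u=p$, $v=c$ yields $d(a,c)=2$, contradicting $d(a,c)=3$. (The symmetric subcase with $a,q,c\in A^i$ is handled the same way via $p$.) Both gaps are one-line fixes and the repaired argument coincides with the paper's proof, but as submitted neither direction is closed.
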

	\begin{proof}
		We prove the result for $\chi _{\gp }(G) = 2$; the reasoning for $\chi _{\mono }(G) = 2$ is similar. The result will follow if we prove the statement for connected graphs. Let $G$ be a connected graph with $\chi _{\gp }(G) = 2$. That $\diam (G) \leq 3$ follows from Corollary~\ref{lem:diameter bound}. Let $A^0,A^1$ be the independent unions of cliques in $G$ that will represent the two colour classes as per our discussion. No shortest path in $G$ of length two can contain three vertices from the same $A^i$, so we need only make sure that any shortest path of length three does not contain three vertices from the same $A^i$. This will only happen if there is a shortest path of the form $x_1,x_2,w,y_1$, where $w \in A^{i+1}$, $x_1,x_2 \in A_x^i$, $y_1 \in A_y^i$. 
	\end{proof}
	
	It follows from Theorem~\ref{thm:chromatic no 2} that one family of graphs with $\gp $- and $\mono $-chromatic number two is blow-ups of bipartite graphs with diameter two or three. We mention two other such families. A graph $G$ is a \emph{split graph} if $V(G)$ can be partitioned into a clique and an independent set. If $G$ is a graph and $\overline{G}$ its complement, then the {\em complementary prism} $G\overline{G}$ of $G$ is the graph formed from the disjoint union of $G$ and $\overline{G}$ by adding a perfect matching between the corresponding vertices of $G$ and $\overline{G}$. For example, the Petersen graph is the complementary prism of $C_5$. Theorem~\ref{thm:chromatic no 2} gives the following corollary.
	
	\begin{corollary}
		If $G$ is a connected non-complete split graph, then $\chi _{\gp }(G) = \chi _{\mono}(G) = 2$ and also for the complementary prism $G\overline{G}$ of $G$, $\chi _{\gp }(G\overline{G})=2$.   
	\end{corollary}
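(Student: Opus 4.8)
The plan is to apply Theorem~\ref{thm:chromatic no 2} in both cases by exhibiting the required partition into two independent unions of cliques and checking the diameter and the neighbourhood condition.

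For a connected non-complete split graph $G$, partition $V(G)$ into a clique $C$ and an independent set $I$ as in the definition; then $C$ is a single clique (so $A^0 = \{C\}$, with $r_0 = 1$) and $I$ is a disjoint union of singleton cliques ($A^1$). First I would verify $\diam(G) \leq 3$: any two vertices of $I$ have a common neighbour in $C$ (since $G$ is connected, each vertex of $I$ has a neighbour in $C$, and if two vertices $u,v \in I$ had neighbours in disjoint parts of... — more simply, if $u \in I$ is adjacent to $c \in C$ and $v \in I$ is adjacent to $c' \in C$, then $u,c,c',v$ is a walk of length $\leq 3$), and a vertex of $I$ and a vertex of $C$ are at distance at most $2$ via $C$. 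Then I check the condition in Theorem~\ref{thm:chromatic no 2}: since $A^0$ consists of the single clique $C$, a vertex $w \in A^1$ can never have neighbours in two distinct cliques of $A^0$, and a vertex $w \in A^0 = C$ has all its $A^1$-neighbours lying in singleton cliques, so again there are no two distinct cliques $A_x^1, A_y^1$ to worry about — the hypothesis of the implication is vacuous. Hence Theorem~\ref{thm:chromatic no 2} gives $\chi_{\gp}(G) = 2$ (it is not $1$ because $G$ is not a disjoint union of cliques, being connected and non-complete, by Lemma~\ref{lem:cliques}), and the same argument with induced paths and monophonic diameter gives $\chi_{\mono}(G) = 2$.

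For the complementary prism $G\overline{G}$, I would take the obvious partition: $A^0 = \{ V(G) \cup \{\overline{v}\} \}$... no — rather, colour the copy of $G$ with colour $0$ and the copy of $\overline{G}$ with colour $1$. Then colour class $0$ induces $G$ and colour class $1$ induces $\overline{G}$; neither is in general position in general, so instead I would use the partition into cliques of $G$ and cliques of $\overline{G}$ suggested by a split partition of $G$. Concretely, if $G$ has a split partition $V(G) = C \cup I$, then in $G\overline{G}$ set $A^0$ to be the clique $C$ of $G$ together with the clique $\overline{I}$ of $\overline{G}$ (these two cliques are vertex-disjoint and have no edges between them, since edges between $G$ and $\overline{G}$ are only the matching edges $v\overline{v}$, and $v \in C$ is matched to $\overline{v} \notin \overline{I}$), and set $A^1$ to be $I$ (independent in $G$) together with $\overline{C}$ (a clique in $\overline{G}$), which are likewise disjoint with no edges between them. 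One checks $\diam(G\overline{G}) \leq 3$ using known bounds on the diameter of complementary prisms, and then verifies the neighbourhood condition of Theorem~\ref{thm:chromatic no 2} for these two independent unions of (at most two) cliques.

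The main obstacle I expect is the verification of the neighbourhood condition for $G\overline{G}$: a vertex $w$ in one of the two cliques of $A^i$ could a priori have neighbours $u,v$ split between the two cliques of $A^{i+1}$, and one must show $d(u,v') = d(u',v) = 2$ for all $u'$ in $u$'s clique and $v'$ in $v$'s clique. This amounts to a short case analysis over whether $w$ lies in the $G$-side or the $\overline{G}$-side and whether $u,v$ are matching-neighbours or within-copy neighbours; I would handle it by exploiting that the only cross-copy edges are the perfect matching, so that such a configuration forces $w$ to be adjacent (within its own copy) to one of $u,v$ and matched to the other, pinning down all the relevant distances to exactly $2$. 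The split-graph case, by contrast, is essentially immediate because one side of each independent union of cliques is a single clique, making the implication vacuous.
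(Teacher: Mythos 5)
Your route is exactly the one the paper intends (the corollary is stated as an immediate consequence of Theorem~\ref{thm:chromatic no 2}, with no written proof), and your two partitions --- $\{C, I\}$ for the split graph and $\{C\cup\overline{I},\ I\cup\overline{C}\}$ for the complementary prism --- are the right ones; the argument does go through. Two slips in your write-up should be corrected, though. First, in the split-graph case your claim that the hypothesis of the neighbourhood condition is ``vacuous'' for $w\in C$ is false: two singletons of $I$ are two distinct cliques of $A^1$, so a vertex $w\in C$ with two neighbours $u,v\in I$ does satisfy the hypothesis, and you must actually verify the conclusion $d(u,v)=2$ --- which is immediate, since $u,v$ are non-adjacent (both in $I$) with common neighbour $w$. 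Second, $\overline{C}$ is not a clique of $\overline{G}$ but an independent set (the complement of a clique); this does not hurt you, because $I\cup\overline{C}$ is then an independent set of $G\overline{G}$ (the only cross-copy edges are matching edges $v\overline{v}$, and no vertex of $I$ is matched into $\overline{C}$), and since $\diam(G\overline{G})\leq 3$ for every complementary prism (any non-matched cross pair is at distance exactly $2$, and any same-side non-adjacent pair is at distance at most $3$ via the other copy), Theorem~\ref{thm:diam three} makes $I\cup\overline{C}$ automatically a general position set. The only genuine work in the prism case is then the class $C\cup\overline{I}$, and the case analysis you anticipate collapses to the single observation that every vertex of $C$ is at distance exactly $2$ from every vertex of $\overline{I}$: a geodesic cannot contain two (necessarily adjacent, hence consecutive) vertices of one clique together with a third vertex equidistant from both. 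With those repairs, and your correct appeal to Lemma~\ref{lem:cliques} to rule out the value $1$, the proof is complete.
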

	

	\section{$\gp $-colourings of Cartesian products}\label{sec:cartesian}
	
	In this section we investigate the general position chromatic numbers of Cartesian products. These graphs have been investigated extensively in the general position problem, see for example~\cite{Cartesian} and~\cite{torus}. Here we will focus on products of paths and cycles. In some cases we derive exact expressions, in others we will be content with finding the asymptotic order.
	
	\begin{proposition}\label{cartesian product bound} 
		If $G$ and $H$ are graphs with orders $n_1$ and $n_2$ respectively, then the $\gp $-chromatic number of the Cartesian product of $G$ and $H$ is bounded by
		\[ \frac{n_1n_2}{\gp (G \cp H) } \leq \chi _{\gp }(G \cp H) \leq \min \{ n_1\chi _{\gp }(H),n_2\chi _{\gp }(G)\}.\] If $\gp (G \cp H)|n_1n_2$ and there is a vertex of $G \cp H$ not contained in a maximum general position set of $G \cp H$, then the lower bound can be improved to $\frac{n_1n_2}{\gp (G\cp H)}+1$. 
	\end{proposition}
	\begin{proof}
		The lower bound follows immediately from Lemma~\ref{lem:ni/xi bound}. In the case that $\gp (G \cp H)|n_1n_2$ equality is possible in the lower bound only if every vertex of $G \cp H$ is contained in a maximum general position set. For the upper bound, observe that if $S$ is a general position set of $G$, then for any $h \in V(H)$, $S \times \{ h\} $ is in general position in the layer $G^h$ of $G \cp H$ ; therefore taking $n_2$ copies of an optimal $\gp $-colouring of $G$, one for each $G$-factor, yields a $\gp $-colouring of $G \cp H$.
	\end{proof}
	
	We show that the lower bound in Proposition~\ref{cartesian product bound} is asymptotically sharp for Cartesian products of paths and exhibit products for which it is exact. Recall that we label the vertices of the path $P_n$ by the elements of $[n]$, so that an element of $P_m \cp P_n$ will be written as $(i,j)$ for some $1 \leq i \leq m$ and $1 \leq j \leq n$.
	
	\begin{theorem}
		The $\gp $-chromatic number of $P_2 \cp P_n$ for $n \geq 3$ is
		\[ \chi _{\gp }(P_2 \cp P_n) =\begin{cases}
			2r, & \text{ if } n = 3r,  \\
			2r+1, & \text{ if } n = 3r+1, \\
			2r+2, & \text{ if } n = 3r+2. \\
		\end{cases}\]
	\end{theorem}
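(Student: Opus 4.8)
The plan is to establish matching lower and upper bounds for $\chi_{\gp}(P_2 \cp P_n)$. For the lower bound, note that $P_2 \cp P_n$ is a ``ladder'' and its diameter is $n$; the general position number of $P_2 \cp P_n$ is known (it is $4$ for $n \geq 3$, since a largest general position set consists of the four corner vertices). A naive application of Lemma~\ref{lem:ni/xi bound} gives $\chi_{\gp} \geq \lceil 2n/4 \rceil = \lceil n/2 \rceil$, which is weaker than what we want. Instead I would argue directly: in any $\gp$-colouring, each colour class restricted to the grid must avoid three collinear-on-a-geodesic vertices. The key structural observation is that a general position set in $P_2 \cp P_n$ can contain at most $3$ vertices, and more refined, if a colour class meets columns spanning a range of $k$ consecutive columns then it is severely restricted --- in fact the ``efficient'' colour classes that use $3$ vertices must occupy vertices in a single column-triple pattern like $\{(1,j),(2,j),(1,j+1)\}$ or its reflections. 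A careful accounting of how many vertices of the $2n$ total can be absorbed by classes of size $3$ versus size $2$ yields the stated lower bound; in particular when $n = 3r$ all $2n = 6r$ vertices must be packed optimally into classes, forcing $2r$ classes.

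For the upper bound I would exhibit explicit $\gp$-colourings. The basic building block is a colouring of $P_2 \cp P_3$ with $2$ colours: colour $\{(1,1),(2,1),(1,2)\}$ red and $\{(2,2),(1,3),(2,3)\}$ blue, and check that neither class contains three vertices on a common geodesic (each is an ``L-tromino'' plus one extra vertex, and one verifies directly that no three of these three vertices lie on a shortest path of the ladder). For $n = 3r$, tile $P_2 \cp P_n$ with $r$ consecutive copies of this $P_2 \cp P_3$ block, using fresh colour pairs for each block; since general position is checked along geodesics and a geodesic of $P_2 \cp P_n$ crossing a block boundary still passes through at most two vertices of any single block's colour class (here one must check that geodesics passing through two blocks do not create a new collinear triple within a reused-free class --- but colours are not reused across blocks, so this is immediate), this gives $2r$ colours. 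For $n = 3r+1$ append one extra column and give its two vertices one new colour (a pair is always in general position), yielding $2r+1$; for $n = 3r+2$ append two extra columns --- the simplest is to use the first $3r$ columns as before and handle the last $C_4$-like piece $P_2 \cp P_2$ with $2$ new colours, but one can do better and I would instead split the last five-ish columns more cleverly, or simply note $P_2 \cp P_2$ needs at most $2$ colours, total $2r+2$.

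The main obstacle, and where I would spend the most care, is the lower bound argument, specifically proving that a colour class of size $3$ in $P_2 \cp P_n$ must have its three vertices confined to two adjacent columns (up to the corner exceptions). This requires a case analysis: given three vertices in general position, consider the span of their column-coordinates. If two vertices $(a,i)$ and $(b,k)$ with $i < k$ are in the class and a third vertex $(c,j)$ has $i \le j \le k$, one checks whether $(c,j)$ lies on some $(a,i)$--$(b,k)$ geodesic; in the ladder, a vertex lies on such a geodesic iff its column coordinate is between $i$ and $k$ and (when the row changes) it is on the ``correct side'' --- but since there are only two rows, almost every intermediate-column vertex lies on some geodesic. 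Pinning down exactly which triples survive, and then converting ``every size-$3$ class lives in two consecutive columns'' into the counting bound $\chi_{\gp} \geq \lceil 2n/3 \rceil$ with the parity corrections $+1$ for $n \equiv 1$ and the careful $+1$ (not more) for $n \equiv 2 \pmod 3$, is the delicate part. I would organise the counting by assigning to each size-$3$ class the index of its leftmost column and showing no two size-$3$ classes can share the same leftmost-column index without forcing a geodesic violation, or alternatively via a discharging/interval argument on the $n$ columns.
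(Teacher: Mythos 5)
Your proposal contains two genuine errors, one in each half of the argument. For the lower bound, your starting premise is false: $\gp(P_2\cp P_n)$ is $3$, not $4$. The four corners are \emph{not} in general position, because $(1,1),(2,1),(2,2),\dots,(2,n)$ is a shortest $(1,1)$--$(2,n)$ path (of length $n=d((1,1),(2,n))$) containing three of them. Once you know $\gp(P_2\cp P_n)=3$ --- which is the content of your ``key structural observation'', and is quoted from~\cite{Cartesian} in the paper --- the plain counting bound of Lemma~\ref{lem:ni/xi bound} (equivalently Proposition~\ref{cartesian product bound}) gives $\chi_{\gp}\geq\lceil 2n/3\rceil$, which already evaluates to $2r$, $2r+1$ and $2r+2$ in the three cases. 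No leftmost-column bookkeeping or discharging is needed; the ``parity corrections'' you plan to add are already contained in the ceiling. So the delicate part you anticipate does not exist, but you do still owe a proof that no general position set has four vertices, which you only sketch.

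The more serious problem is the upper bound: your building block is not a general position set. In $\{(1,1),(2,1),(1,2)\}$ the walk $(2,1),(1,1),(1,2)$ is a shortest $(2,1)$--$(1,2)$ path (length $2=d((2,1),(1,2))$) through all three vertices, so the direct verification you promise fails. In fact \emph{no} $3$-subset of the four vertices in two adjacent columns of the ladder is in general position (each such triple contains a vertex adjacent to both others, hence lying on a length-two geodesic between them), so your structural claim that efficient classes sit in a ``single column-triple pattern like $\{(1,j),(2,j),(1,j+1)\}$'' is wrong in both directions: those sets are illegal, and the legal $3$-sets are zigzags spanning three columns, e.g.\ $\{(1,j),(2,j+1),(1,j+2)\}$. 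Your tiling strategy is repairable --- colour each $P_2\cp P_3$ block with the two zigzags $\{(1,j),(2,j+1),(1,j+2)\}$ and $\{(2,j),(1,j+1),(2,j+2)\}$; since blocks are convex in the product, these remain in general position in the whole ladder, and your handling of the leftover one or two columns then goes through. For comparison, the paper obtains the upper bound without any explicit tiling, via Lemma~\ref{tot domination bound} ($\chi_{\gp}(G)\leq\gamma_t(G)$ for diamond-free graphs) together with $\gamma_t(P_2\cp P_n)=2\lfloor(n+2)/3\rfloor$ from~\cite{Grav}, patching the case $n\equiv 1\pmod 3$ by deleting one end column and colouring it as a pair.
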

	\begin{proof}
		The $\gp $-number of $P_2 \cp P_n$ is three~\cite{Cartesian}. By the lower bound of Proposition~\ref{cartesian product bound} we have $\chi _{\gp }(P_2 \cp P_n) \geq \frac{2n}{3}$, which shows that the expression in the statement of the theorem is a lower bound for $\chi _{\gp }(P_2 \cp P_n)$. It follows from Lemma~\ref{tot domination bound} that $\chi _{\gp }(P_2 \cp P_n) \leq \gamma _t(P_2 \cp P_n)$. It is shown in~\cite{Grav} that the total domination number of $P_2 \cp P_n$ is given by $\gamma _t(P_2 \cp P_n) = 2 \left \lfloor \frac{n+2}{3} \right \rfloor $. For $n$ with remainder $0$ or $2$ on division by $3$ this matches our lower bound. If $n = 3r+1$, let $n$ be a leaf of $P_n$; then the subgraph $P_2 \cp (P_n-\{ n\} )$ can be coloured by $2r$ general position sets by the result for orders divisible by three, with $\{ (1,n),(2,n)\} $ giving the final colour class.
	\end{proof}
	
	\begin{theorem}
		For $n \geq 3$, the $\gp $-chromatic number of $P_3 \cp P_n$ is \[ \chi _{\gp }(P_3 \cp P_n) = \frac{5n}{6}+O(1).\] If $12|n$, then $\chi _{\gp }(P_3 \cp P_n) = \frac{5n}{6}$.
	\end{theorem}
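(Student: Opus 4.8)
The plan is to establish matching upper and lower bounds of the form $\frac{5n}{6}+O(1)$, and to be exact when $12\mid n$. For the lower bound, I would use Lemma~\ref{lem:ni/xi bound}: since $P_3\cp P_n$ has $3n$ vertices, it suffices to know $\gp(P_3\cp P_n)$. The general position number of $P_3\cp P_n$ is known (from~\cite{Cartesian}) to be of the form $\frac{18}{5}+o(1)$ per... more precisely, the relevant fact is that a largest general position set in the $3\times n$ grid has size roughly $\tfrac{18n}{5}$ scaled appropriately; dividing $3n$ by this value gives the $\tfrac{5n}{6}$ lower bound. I would cite the exact value of $\gp(P_3\cp P_n)$ from the literature and plug it into $\lceil 3n/\gp(P_3\cp P_n)\rceil$; when $12\mid n$ this should land exactly on $\tfrac{5n}{6}$, possibly after a short parity/divisibility check.

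For the upper bound, the idea is to exhibit an explicit periodic $\gp$-colouring of $P_3\cp P_n$ with period dividing $12$ in the long direction, using $10$ colours per block of $12$ columns (which gives exactly $\tfrac{5n}{6}$ colours when $12\mid n$). Each colour class should be a union, over the three rows, of short horizontal runs arranged so that no three of its vertices lie on a common geodesic of the grid. Recall that in $P_3\cp P_n$ a set is in general position iff it contains no three vertices on a monotone staircase path; concretely one must avoid three collinear points in a row or column and avoid three points forming a "monotone L". I would design a $3\times 12$ tile, colour its $36$ cells with $10$ colours so that each colour class (viewed within the infinite strip obtained by tiling) is in general position, verify the tile by the staircase criterion, and then tile $P_3\cp P_n$; for $n$ not divisible by $12$ one absorbs the $O(1)$ leftover columns into $O(1)$ extra colour classes (e.g. colour each leftover column, or pair of columns, with fresh colours), which only changes the count by an additive constant.

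The main obstacle I expect is the explicit construction and verification of the $3\times 12$ tile: one needs a colouring in which every colour class is simultaneously in general position \emph{and} the classes fit together across tile boundaries without creating a forbidden triple straddling two tiles. Checking the general position property of each class reduces to a finite case analysis (for each pair of same-coloured cells, check that no third same-coloured cell lies on a geodesic through them), but getting the period and colour count optimal — i.e.\ exactly $10$ colours over $12$ columns rather than something slightly larger — is the delicate part, and is presumably why the exact statement is restricted to $12\mid n$. A secondary, milder obstacle is confirming that the cited value of $\gp(P_3\cp P_n)$ is exactly of the right residue class so that the ceiling in Lemma~\ref{lem:ni/xi bound} equals $\tfrac{5n}{6}$ with no off-by-one loss when $12\mid n$.
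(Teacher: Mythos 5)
Your lower bound does not work, and this is the substantive gap. The counting bound of Lemma~\ref{lem:ni/xi bound} gives only $\chi_{\gp}(P_3 \cp P_n) \geq \lceil 3n/\gp(P_3\cp P_n)\rceil$, and $\gp(P_3\cp P_n)=4$: a largest general position set of a grid has constant size (for instance the four neighbours of a middle-row vertex), it does not grow with $n$, so the quantity ``$\tfrac{18n}{5}$'' you gesture at does not exist. The naive bound therefore yields only $\lceil 3n/4\rceil = \tfrac{3n}{4}+O(1)$, which falls short of $\tfrac{5n}{6}$ by $n/12$, a linear amount; no parity or divisibility adjustment can recover it. The paper closes this gap with a structural observation that your argument is missing: every general position set of order four in $P_3\cp P_n$ must contain two vertices of the middle $P_n$-layer. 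Since that layer has only $n$ vertices, at most $\lfloor n/2\rfloor$ colour classes can have size four and all remaining classes have size at most three, so counting vertices gives $\chi_{\gp}(P_3\cp P_n) \geq \lfloor n/2\rfloor + \tfrac{1}{3}\bigl(3n-4\lfloor n/2\rfloor\bigr) = \tfrac{5n}{6}+O(1)$. Without some such refinement of the density bound your lower bound cannot reach the stated constant.

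Your upper bound plan (a periodic $3\times 12$ tile with $10$ colours per $12$ columns, absorbing leftovers into $O(1)$ extra classes) has the right shape and the right count, but as written it is only a plan: the tile is never exhibited, and you yourself flag its construction as the main obstacle. For comparison, the paper builds the colouring from $n/2$ colour classes that are open neighbourhoods of the middle-row vertices $(2,4i+2)$ and $(2,4i+3)$ (each such neighbourhood is a general position set of size four because the grid is diamond-free); these cover all of the middle row and half of the outer rows, and the remaining $n$ outer-row vertices are grouped into $n/3$ explicit triples following a period-$12$ pattern, giving exactly $\tfrac{n}{2}+\tfrac{n}{3}=\tfrac{5n}{6}$ colours when $12\mid n$. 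You would need to supply an equally explicit construction, verified against the betweenness criterion you state, before the upper bound could be considered proved.
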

	\begin{proof}
		Each general position set of $P_3\cp P_n$ of order four contains two vertices of the central $P_n$ layer. Therefore there are at most $\left \lfloor \frac{n}{2} \right \rfloor $ colour classes in a $\gp $-colouring of $P_3 \cp P_n$ containing four vertices. Therefore \[ \chi _{\gp }(P_3 \cp P_n) \geq \left \lfloor \frac{n}{2} \right \rfloor +\frac{3n-4\left \lfloor \frac{n}{2} \right \rfloor}{3} = \frac{5n}{6}+O(1).\] 
		Suppose that $n = 4q$. For $0 \leq i \leq q-1$, colour the neighbourhood of the vertex $(2,4i+2)$ with colour $L_i$ and the neighbourhood of vertex $(2,4i+3)$ with colour $R_i$. This leaves the vertices $(r,4j+s)$ to be coloured, where $r \in \{ 1,3\} $, $0 \leq j \leq q-1$ and $s \in \{ 1,4\} $. At most four of these vertices cannot be included in a colour class of order three. If $12|n$, let $n = 12q'$ and for $0 \leq i \leq q'-1$ colour the remaining vertices as follows:
		\begin{itemize}
			\item vertices $(1,12i+1),(1,12i+5),(3,12i+4)$ with colour $A_i$,
			\item vertices $(1,12i+4),(3,12i+1),(3,12i+5)$ with colour $B_i$,
			\item vertices $(1,12i+8),(1,12i+12),(3,12i+9)$ with colour $C_i$,
			\item vertices $(1,12i+9),(3,12i+8),(3,12i+12)$ with colour $D_i$.
		\end{itemize}
		Each colour class is in general position. When $12|n$, we have used exactly $\frac{5n}{6}$ colours. Such a colouring is shown in Figure~\ref{fig:P_3 cp P_n}. If $n = 12q'+r$, $1 \leq r < 12$, then a subgraph $P_3 \cp P_{12q'}$ can be coloured with $10q'$ colours and a fixed amount of colours suffices for the remaining vertices.
	\end{proof}

	\begin{figure}\centering
		\begin{tikzpicture}[x=0.2mm,y=-0.2mm,inner sep=0.1mm,scale=1.10,
			thick,vertex/.style={circle,draw,minimum size=10,font=\tiny,fill=white},edge label/.style={fill=white}]
			\tiny
			
			\node at (-125,-75) [vertex] (11) {g};
			\node at (-75,-75) [vertex] (12) {a};
			\node at (-25,-75) [vertex] (13) {b};
			\node at (25,-75) [vertex] (14) {h};
			\node at (75,-75) [vertex] (15) {g};
			\node at (125,-75) [vertex] (16) {c};,
			\node at (175,-75) [vertex] (17) {d};
			\node at (225,-75) [vertex] (18) {i};
			\node at (275,-75) [vertex] (19) {j};
			\node at (325,-75) [vertex] (110) {e};
			\node at (375,-75) [vertex] (111) {f};
			\node at (425,-75) [vertex] (112) {i};

			\node at (-125,-25) [vertex] (21) {a};
			\node at (-75,-25) [vertex] (22) {b};
			\node at (-25,-25) [vertex] (23) {a};
			\node at (25,-25) [vertex] (24) {b};
			\node at (75,-25) [vertex] (25) {c};
			\node at (125,-25) [vertex] (26) {d};,
			\node at (175,-25) [vertex] (27) {c};
			\node at (225,-25) [vertex] (28) {d};
			\node at (275,-25) [vertex] (29) {e};
			\node at (325,-25) [vertex] (210) {f};
			\node at (375,-25) [vertex] (211) {e};
			\node at (425,-25) [vertex] (212) {f};
			
			\node at (-125,25) [vertex] (31) {h};
			\node at (-75,25) [vertex] (32) {a};
			\node at (-25,25) [vertex] (33) {b};
			\node at (25,25) [vertex] (34) {g};
			\node at (75,25) [vertex] (35) {h};
			\node at (125,25) [vertex] (36) {c};,
			\node at (175,25) [vertex] (37) {d};
			\node at (225,25) [vertex] (38) {j};
			\node at (275,25) [vertex] (39) {i};
			\node at (325,25) [vertex] (310) {e};
			\node at (375,25) [vertex] (311) {f};
			\node at (425,25) [vertex] (312) {j};

			\path
			
			(11) edge (12)
			(12) edge (13)
			(13) edge (14)
			(14) edge (15)
			(15) edge (16)
			(16) edge (17)
			(17) edge (18)
			(18) edge (19)
			(19) edge (110)
			(110) edge (111)
			(111) edge (112)
			
			(21) edge (22)
			(22) edge (23)
			(23) edge (24)
			(24) edge (25)
			(25) edge (26)
			(26) edge (27)
			(27) edge (28)
			(28) edge (29)
			(29) edge (210)
			(210) edge (211)
			(211) edge (212)
			
			(31) edge (32)
			(32) edge (33)
			(33) edge (34)
			(34) edge (35)
			(35) edge (36)
			(36) edge (37)
			(37) edge (38)
			(38) edge (39)
			(39) edge (310)
			(310) edge (311)
			(311) edge (312)
			
			(11) edge (21)
			(21) edge (31)
			
			(12) edge (22)
			(22) edge (32)
			
			(13) edge (23)
			(23) edge (33)
			
			(14) edge (24)
			(24) edge (34)
			
			(15) edge (25)
			(25) edge (35)
			
			(16) edge (26)
			(26) edge (36)
			
			(17) edge (27)
			(27) edge (37)
			
			(18) edge (28)
			(28) edge (38)
			
			(19) edge (29)
			(29) edge (39)
			
			(110) edge (210)
			(210) edge (310)
			
			(111) edge (211)
			(211) edge (311)
			
			(112) edge (212)
			(212) edge (312)
			
			;

		\end{tikzpicture}
		\caption{A $\gp $-colouring of a $3 \times 12$ Cartesian grid with colours represented by letters}
		\label{fig:P_3 cp P_n}
	\end{figure}
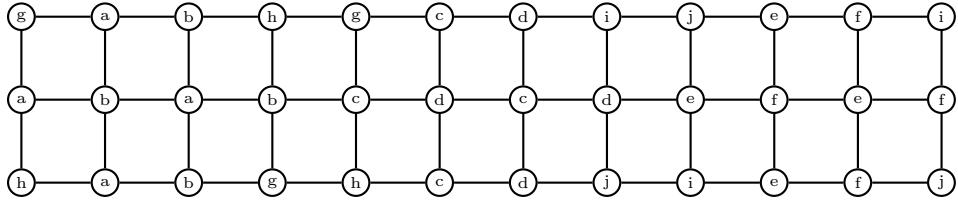

	Products with paths of length three already allow for an efficient packing of general position sets of order four.
	
	\begin{theorem}
		If $n \geq 4$, then $\chi _{\gp }(P_4 \cp P_n) = n+1$.
	\end{theorem}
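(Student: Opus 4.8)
The plan is to establish matching upper and lower bounds on $\chi_{\gp}(P_4 \cp P_n)$.

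For the \textbf{lower bound}, I would argue that $\chi_{\gp}(P_4 \cp P_n) \geq n+1$ by a counting/structure argument analogous to the one used for $P_3 \cp P_n$. First I would recall (from~\cite{Cartesian} or by direct argument) the size and structure of the largest general position sets in $P_4 \cp P_n$; I expect these to have order roughly $5$ or $6$, with the key point being a bound on how many vertices of each of the two ``inner'' layers (rows $2$ and $3$) a single colour class can contain. The cleanest version of the lower bound would come from identifying a set of vertices — for instance, the $2n$ vertices in the two inner rows — on which every colour class can place at most two vertices, forcing at least $n$ colours, and then finding one extra vertex or configuration that cannot be absorbed, pushing the count to $n+1$. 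Alternatively, since $\diam(P_4 \cp P_n) = n+2$, Lemma~\ref{lem:diameter bound} already gives $\chi_{\gp}(P_4 \cp P_n) \geq \lceil (n+3)/2 \rceil$, which is too weak, so a more refined argument on the inner layers is genuinely needed.

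For the \textbf{upper bound} $\chi_{\gp}(P_4 \cp P_n) \leq n+1$, the idea is to exhibit an explicit colouring. A natural approach: partition $V(P_4 \cp P_n)$ cleverly into $n+1$ general position sets, perhaps using a periodic pattern in the $P_n$ direction (with period a small constant, such as $2$ or $4$) similar to the block constructions in Figure~\ref{fig:P_3 cp P_n}. Each block of consecutive columns would be coloured using a fixed template of general position sets, with a bounded number of ``boundary'' colours to handle the ends; one then checks $(n+1) = (\text{blocks}) \times (\text{colours per block}) + O(1)$ works out exactly. I would verify general position of each colour class locally: a set $S$ is in general position iff no three of its vertices lie on a common geodesic, and in a grid geodesics between $(i,j)$ and $(i',j')$ are exactly the monotone staircase paths, so this reduces to a finite check within each periodic block. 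Diamond-freeness of the grid also lets us use Lemma~\ref{tot domination bound}: $\chi_{\gp}(P_4 \cp P_n) \leq \gamma_t(P_4 \cp P_n)$, and if the known value of $\gamma_t(P_4 \cp P_n)$ (from the total-domination literature, e.g.~\cite{Grav} or related) equals $n+1$ or matches for most residues, that handles the upper bound with far less work — I would check this first.

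The \textbf{main obstacle} I anticipate is the lower bound: pinning down exactly why $n$ colours do not suffice. Showing that at most $n$ colour classes can each contain a useful number of inner-layer vertices is the easy half; the subtle point is ruling out a clever colouring with exactly $n$ classes where some classes are large enough to compensate. This likely requires a careful case analysis of which general position sets of $P_4 \cp P_n$ contain two vertices from the same inner row and two from elsewhere, together with a parity or double-counting argument showing that such sets cannot tile the graph with only $n$ colours. Getting the boundary bookkeeping exactly right (so the answer is $n+1$ for \emph{all} $n \geq 4$, not just asymptotically or for special residues) is where the care lies.
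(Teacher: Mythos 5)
Your write-up is a plan rather than a proof, and the part you yourself identify as the crux --- the lower bound --- rests on a structural claim that is false. You propose that every colour class contains at most two vertices from the two inner rows of $P_4 \cp P_n$. But the grid is diamond-free, so the open neighbourhood of any internal vertex is a general position set; in particular $\{(1,2),(2,1),(2,3),(3,2)\}$ is a $\gp$-set of order four containing \emph{three} vertices from rows $2$ and $3$. So the counting argument you sketch does not get off the ground, and your fallback (``a careful case analysis \ldots together with a parity or double-counting argument'') is left entirely open. The paper's lower bound is much simpler: it starts from $\gp(P_4\cp P_n)=4$, which already gives $k\ge n$ by Proposition~\ref{cartesian product bound}, and then observes that no corner vertex lies in a $\gp$-set of order four (the other three vertices would have to form an antichain in the coordinatewise order, and in an $L^1$-grid the middle element of any three-element antichain lies on a geodesic between the outer two). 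Since no three corners can share a class, at least two classes contain a corner and hence have size at most three, so $4n\le 3\cdot 2+4(k-2)$ and $k\ge n+1$.

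On the upper bound your instincts are closer to the mark, but neither of your two concrete suggestions closes it. The total-domination route via Lemma~\ref{tot domination bound} does not give $n+1$: $\gamma_t(P_4\cp P_n)$ grows like $6n/5$, strictly larger than $n+1$, so ``checking this first'' would fail. The periodic-template idea is the right shape, but you never exhibit the template; the paper uses the sliding four-sets $\{(1,j+1),(2,j),(3,j+2),(4,j+1)\}$ for $1\le j\le n-2$, which are pairwise disjoint and leave only eight boundary vertices to be absorbed into three further classes, for $n+1$ colours in total. To turn your proposal into a proof you would need to supply such an explicit family and replace the inner-row counting by the corner argument (or some other correct obstruction).
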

	\begin{proof}
		For $1 \leq j \leq n-2$ the set containing the four vertices $(1,j+1)$, $(2,j)$, $(3,j+2)$ and $(4,j+1)$ is in general position. These sets are disjoint and leave eight vertices to be coloured, which can be done with the three new colour classes $\{ (2,n),(3,1)\} $, $\{ (4,1),(4,n),(3,2)\} $ and $\{ (1,1),(1,n),(2,n-1)\} $. This colours the $4 \times n$ grid with $n+1$ colours. A $\gp $-colouring with $n$ colours is impossible by Proposition~\ref{cartesian product bound}, as the corner vertices are not contained in any general position set of order four, so $n+1$ colours is optimal. An example is shown in Figure~\ref{fig:P_4 cp P_n}. 
	\end{proof}

	\begin{figure}\centering
		\begin{tikzpicture}[x=0.2mm,y=-0.2mm,inner sep=0.1mm,scale=1.30,
			thick,vertex/.style={circle,draw,minimum size=10,font=\tiny,fill=white},edge label/.style={fill=white}]
			\tiny
			
			\node at (-125,-75) [vertex,color=yellow] (11) {};
			\node at (-75,-75) [vertex,color=red] (12) {};
			\node at (-25,-75) [vertex,color=blue] (13) {};
			\node at (25,-75) [vertex,color=green] (14) {};
			\node at (75,-75) [vertex,color=pink] (15) {};
			\node at (125,-75) [vertex,color=yellow] (16) {};
			
			\node at (-125,-25) [vertex,color=red] (21) {};
			\node at (-75,-25) [vertex,color=blue] (22) {};
			\node at (-25,-25) [vertex,color=green] (23) {};
			\node at (25,-25) [vertex,color=pink] (24) {};
			\node at (75,-25) [vertex,color=yellow] (25) {};
			\node at (125,-25) [vertex,color=orange] (26) {};
			
			\node at (-125,25) [vertex,color=orange] (31) {};
			\node at (-75,25) [vertex,color=brown] (32) {};
			\node at (-25,25) [vertex,color=red] (33) {};
			\node at (25,25) [vertex,color=blue] (34) {};
			\node at (75,25) [vertex,color=green] (35) {};
			\node at (125,25) [vertex,color=pink] (36) {};
			
			\node at (-125,75) [vertex,color=brown] (41) {};
			\node at (-75,75) [vertex,color=red] (42) {};
			\node at (-25,75) [vertex,color=blue] (43) {};
			\node at (25,75) [vertex,color=green] (44) {};
			\node at (75,75) [vertex,color=pink] (45) {};
			\node at (125,75) [vertex,color=brown] (46) {};

			\path
			
			(11) edge (12)
			(12) edge (13)
			(13) edge (14)
			(14) edge (15)
			(15) edge (16)
			
			(21) edge (22)
			(22) edge (23)
			(23) edge (24)
			(24) edge (25)
			(25) edge (26)
			
			(31) edge (32)
			(32) edge (33)
			(33) edge (34)
			(34) edge (35)
			(35) edge (36)
			
			(41) edge (42)
			(42) edge (43)
			(43) edge (44)
			(44) edge (45)
			(45) edge (46)
			
			(11) edge (21)
			(21) edge (31)
			(31) edge (41)
			
			(12) edge (22)
			(22) edge (32)
			(32) edge (42)
			
			(13) edge (23)
			(23) edge (33)
			(33) edge (43)
			
			(14) edge (24)
			(24) edge (34)
			(34) edge (44)
			
			(15) edge (25)
			(25) edge (35)
			(35) edge (45)
			
			(16) edge (26)
			(26) edge (36)
			(36) edge (46)

			;

		\end{tikzpicture}
		\caption{A $\gp $-colouring of a $4 \times 6$ Cartesian grid}
		\label{fig:P_4 cp P_n}
	\end{figure}
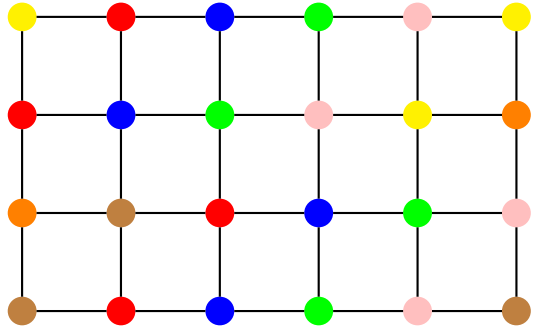

	Using the lower bound from Proposition~\ref{cartesian product bound} and the upper bound for the total domination number of the Cartesian product of paths from~\cite{Grav} we obtain the following result for Cartesian products of long paths.
	\begin{theorem}
		If $n_1,n_2 \geq 16$, then 
		\[ \frac{n_1n_2}{4} \leq \chi _{\gp }(P_{n_1} \cp P_{n_2} ) \leq \left \lfloor \frac{(n_1+2)(n_2+2)}{4} \right \rfloor -4. \]
	\end{theorem}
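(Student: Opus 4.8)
The plan is to sandwich $\chi_{\gp}(P_{n_1} \cp P_{n_2})$ between two already-available quantities. For the lower bound, I would first recall the value of the grid general position number: it is known (see~\cite{Cartesian}) that $\gp(P_{n_1} \cp P_{n_2}) = 4$ once both $n_1$ and $n_2$ are large enough, and certainly for $n_1, n_2 \geq 16$. Substituting this into the lower bound of Proposition~\ref{cartesian product bound} gives at once
\[ \chi_{\gp}(P_{n_1} \cp P_{n_2}) \geq \left\lceil \frac{n_1 n_2}{\gp(P_{n_1} \cp P_{n_2})} \right\rceil = \left\lceil \frac{n_1 n_2}{4} \right\rceil \geq \frac{n_1 n_2}{4}, \]
which is the claimed lower inequality.

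For the upper bound, the key point is that $P_{n_1} \cp P_{n_2}$ is bipartite, hence triangle-free, and therefore certainly diamond-free. Thus Lemma~\ref{tot domination bound} applies and yields $\chi_{\gp}(P_{n_1} \cp P_{n_2}) \leq \gamma_t(P_{n_1} \cp P_{n_2})$. It then remains only to bound the total domination number of the grid, for which I would invoke the closed formula of Gravier~\cite{Grav}: for $n_1, n_2 \geq 16$ one has $\gamma_t(P_{n_1} \cp P_{n_2}) = \left\lfloor \frac{(n_1+2)(n_2+2)}{4} \right\rfloor - 4$. Chaining the two inequalities produces the stated upper bound.

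I do not expect a real obstacle here: the theorem is essentially a corollary of Proposition~\ref{cartesian product bound}, Lemma~\ref{tot domination bound}, the known grid general position number, and Gravier's total domination formula. The only things demanding a little care are (i) verifying that $n_1, n_2 \geq 16$ is exactly the regime in which Gravier's exact formula holds (for smaller grids only case-by-case values or weaker estimates are available), and (ii) confirming that $\gp(P_{n_1} \cp P_{n_2}) \leq 4$ throughout this range, so that the denominator in Proposition~\ref{cartesian product bound} really is $4$ and the lower bound is not diluted. Both are routine given the cited literature.
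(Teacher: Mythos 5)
Your proposal is correct and is exactly the argument the paper intends: the lower bound comes from Proposition~\ref{cartesian product bound} together with $\gp(P_{n_1}\cp P_{n_2})=4$, and the upper bound from the diamond-freeness of the grid, Lemma~\ref{tot domination bound}, and Gravier's bound on $\gamma_t$ of grids. The only cosmetic caveat is that one only needs Gravier's result as an \emph{upper} bound on $\gamma_t(P_{n_1}\cp P_{n_2})$ (which is how the paper cites it), so whether it is exact in this range is immaterial.
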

	It follows that $\chi _{\gp }(P_{n_1} \cp P_{n_2}) \approx \frac{n_1n_2}{4}$ for $n_1,n_2 \geq 16$ and the total domination bound in Theorem~\ref{tot domination bound} is asymptotically tight for grids. We give an example to show that the linear terms can be improved. 
	
	\begin{theorem}\label{thm:gridtesselation}
		If $n_1 \equiv 3 \pmod{4}$ and $n_2 \equiv 0 \pmod{4}$, then
		\[ \chi _{\gp } (P_{n_1} \cp P_{n_2} ) \leq \begin{cases}
			\frac{n_1n_2}{4}+\frac{n_1}{6} + O(1), & \text{ if } n_2 < 2n_1-2,  \\
			\frac{n_1n_2}{4}+ \frac{n_2}{12} + O(1), & \text{ if } n_2 > 2n_1-2. \\
		\end{cases}\]
		When $n_2 = 2n_1-2$, $\chi _{\gp }(P_{n_1} \cp P_{n_2}) = \frac{n_1n_2}{4}+1$.
	\end{theorem}
	\begin{proof}
		As the Cartesian grid is diamond-free, any vertex neighbourhood is a general position set. Consider the open neighbourhoods of vertices  $(i,j)$ such that either 
		\begin{itemize}
			\item $i \equiv 0 \pmod 4$, $j \equiv 0,1 \pmod 4$, $j \not \in \{ 1,n_2\} $, or
			\item $i \equiv 2 \pmod 4$, $j \equiv 2,3 \pmod 4$.
		\end{itemize}
		These neighbourhoods do not overlap and are all general position sets of order four. Colour each of these neighbourhoods with a new colour; an example is shown in Figure~\ref{fig:P_7 cp P_8}.
		
		The $2n_1+n_2-6$ vertices that remain uncoloured consist of the four corner vertices, $\frac{n_2}{2}-2$ vertices in each of $\{ 1\} \times V(P_{n_2})$ and $\{ n_1\} \times V(P_{n_2})$, and $n_1 - 3$ vertices in each of $V(P_n) \times \{ 1,2\}$ and $V(P_n) \times \{ n_2-1,n_2\} $ belonging to the closed neighbourhoods of vertices $(4r,1)$ and $(4r,n_2)$, $1 \leq r \leq \left \lfloor \frac{n_1}{4} \right \rfloor $. Another $\min \{ \frac{n_2}{2}-2,n_1-3\} $ of the remaining non-corner vertices can be coloured with general position sets of order four, and then the remaining ones with general position sets of order three until at most two uncoloured vertices remain. When $n_2 = 2n_1-2$, the number of colours used matches the adjusted lower bound in Proposition~\ref{cartesian product bound}.
	\end{proof}

	\begin{figure}\centering
		\begin{tikzpicture}[x=0.2mm,y=-0.2mm,inner sep=0.1mm,scale=1.0,
			thick,vertex/.style={circle,draw,minimum size=10,font=\tiny,fill=white},edge label/.style={fill=white}]
			\tiny
			
			\node at (-125,-75) [vertex] (11) {};
			\node at (-75,-75) [vertex,color=red] (12) {};
			\node at (-25,-75) [vertex,color=black] (13) {};
			\node at (25,-75) [vertex] (14) {};
			\node at (75,-75) [vertex] (15) {};
			\node at (125,-75) [vertex,color=blue] (16) {};
			\node at (175,-75) [vertex,color=cyan] (17) {};
			\node at (225,-75) [vertex] (18) {};
			
			\node at (-125,-25) [vertex,color=red] (21) {};
			\node at (-75,-25) [vertex,color=black] (22) {};
			\node at (-25,-25) [vertex,color=red] (23) {};
			\node at (25,-25) [vertex,color=black] (24) {};
			\node at (75,-25) [vertex,color=blue] (25) {};
			\node at (125,-25) [vertex,color=cyan] (26) {};
			\node at (175,-25) [vertex,color=blue] (27) {};
			\node at (225,-25) [vertex,color=cyan] (28) {};
			
			\node at (-125,25) [vertex] (31) {};
			\node at (-75,25) [vertex,color=red] (32) {};
			\node at (-25,25) [vertex,color=black] (33) {};
			\node at (25,25) [vertex,color=green] (34) {};
			\node at (75,25) [vertex,color=yellow] (35) {};
			\node at (125,25) [vertex,color=blue] (36) {};
			\node at (175,25) [vertex,color=cyan] (37) {};
			\node at (225,25) [vertex] (38) {};
			
			\node at (-125,75) [vertex] (41) {};
			\node at (-75,75) [vertex] (42) {};
			\node at (-25,75) [vertex,color=green] (43) {};
			\node at (25,75) [vertex,color=yellow] (44) {};
			\node at (75,75) [vertex,color=green] (45) {};
			\node at (125,75) [vertex,color=yellow] (46) {};
			\node at (175,75) [vertex] (47) {};
			\node at (225,75) [vertex] (48) {};
			
			\node at (-125,125) [vertex] (51) {};
			\node at (-75,125) [vertex,color=brown] (52) {};
			\node at (-25,125) [vertex,color=orange] (53) {};
			\node at (25,125) [vertex,color=green] (54) {};
			\node at (75,125) [vertex,color=yellow] (55) {};
			\node at (125,125) [vertex,color=violet] (56) {};
			\node at (175,125) [vertex,color=teal] (57) {};
			\node at (225,125) [vertex] (58) {};
			
			\node at (-125,175) [vertex,color=brown] (61) {};
			\node at (-75,175) [vertex,color=orange] (62) {};
			\node at (-25,175) [vertex,color=brown] (63) {};
			\node at (25,175) [vertex,color=orange] (64) {};
			\node at (75,175) [vertex,color=violet] (65) {};
			\node at (125,175) [vertex,color=teal] (66) {};
			\node at (175,175) [vertex,color=violet] (67) {};
			\node at (225,175) [vertex,color=teal] (68) {};
			
			\node at (-125,225) [vertex] (71) {};
			\node at (-75,225) [vertex,color=brown] (72) {};
			\node at (-25,225) [vertex,color=orange] (73) {};
			\node at (25,225) [vertex] (74) {};
			\node at (75,225) [vertex] (75) {};
			\node at (125,225) [vertex,color=violet] (76) {};
			\node at (175,225) [vertex,color=teal] (77) {};
			\node at (225,225) [vertex] (78) {};

			\path
			
			(11) edge (12)
			(12) edge (13)
			(13) edge (14)
			(14) edge (15)
			(15) edge (16)
			(16) edge (17)
			(17) edge (18)
			
			(21) edge (22)
			(22) edge (23)
			(23) edge (24)
			(24) edge (25)
			(25) edge (26)
			(26) edge (27)
			(27) edge (28)
			
			(31) edge (32)
			(32) edge (33)
			(33) edge (34)
			(34) edge (35)
			(35) edge (36)
			(36) edge (37)
			(37) edge (38)
			
			(41) edge (42)
			(42) edge (43)
			(43) edge (44)
			(44) edge (45)
			(45) edge (46)
			(46) edge (47)
			(47) edge (48)
			
			(51) edge (52)
			(52) edge (53)
			(53) edge (54)
			(54) edge (55)
			(55) edge (56)
			(56) edge (57)
			(57) edge (58)
			
			(61) edge (62)
			(62) edge (63)
			(63) edge (64)
			(64) edge (65)
			(65) edge (66)
			(66) edge (67)
			(67) edge (68)
			
			(71) edge (72)
			(72) edge (73)
			(73) edge (74)
			(74) edge (75)
			(75) edge (76)
			(76) edge (77)
			(77) edge (78)

			(11) edge (21)
			(21) edge (31)
			(31) edge (41)
			(41) edge (51)
			(51) edge (61)
			(61) edge (71)
			
			(12) edge (22)
			(22) edge (32)
			(32) edge (42)
			(42) edge (52)
			(52) edge (62)
			(62) edge (72)
			
			(13) edge (23)
			(23) edge (33)
			(33) edge (43)
			(43) edge (53)
			(53) edge (63)
			(63) edge (73)
			
			(14) edge (24)
			(24) edge (34)
			(34) edge (44)
			(44) edge (54)
			(54) edge (64)
			(64) edge (74)
			
			(15) edge (25)
			(25) edge (35)
			(35) edge (45)
			(45) edge (55)
			(55) edge (65)
			(65) edge (75)
			
			(16) edge (26)
			(26) edge (36)
			(36) edge (46)
			(46) edge (56)
			(56) edge (66)
			(66) edge (76)
			
			(17) edge (27)
			(27) edge (37)
			(37) edge (47)
			(47) edge (57)
			(57) edge (67)
			(67) edge (77)
			
			(18) edge (28)
			(28) edge (38)
			(38) edge (48)
			(48) edge (58)
			(58) edge (68)
			(68) edge (78)

			;

		\end{tikzpicture}
		\caption{A partial colouring of $P_7 \cp P_8$}
		\label{fig:P_7 cp P_8}
	\end{figure}

	It would be of interest to determine exactly when the lower bound in Proposition~\ref{cartesian product bound} can be met for grids. We now investigate cylinder and torus graphs. The $\gp $-numbers of cylinders, i.e.\ Cartesian products of paths and cycles, were determined in~\cite{Cartesian}.
	\begin{theorem}[\cite{Cartesian}]\label{thm:gp of cylinder}
		If $n_1 \geq 2$ and $n_2 \geq 3$, then
		\[ \gp(P_{n_1} \cp C_{n_2}) =\begin{cases}
			3, & \text{ if } n_1 = 2, n_2 = 3,  \\
			5, & \text{ if } n_1 \geq 5, \text{ and } n_2 = 7 \text{ or } n_2 \geq 9,  \\
			4, & \text{ otherwise. }  \\
		\end{cases}\]
		
	\end{theorem}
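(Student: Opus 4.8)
The plan is to prove the two bounds separately; the whole argument rests on the fact that the Cartesian product is distance additive: $d_{P_{n_1}\cp C_{n_2}}\big((a,x),(b,y)\big)=d_{P_{n_1}}(a,b)+d_{C_{n_2}}(x,y)$, so that a path of $P_{n_1}\cp C_{n_2}$ is a geodesic precisely when both of its coordinate projections are geodesics. Since betweenness along a path is determined by the coordinates, three vertices $(a,x),(b,y),(c,z)$ lie on a common geodesic with $(b,y)$ in the middle if and only if $b$ is between $a$ and $c$ on $P_{n_1}$ and $y$ is metrically between $x$ and $z$ on $C_{n_2}$, i.e.\ $d_{C_{n_2}}(x,z)=d_{C_{n_2}}(x,y)+d_{C_{n_2}}(y,z)$. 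The one subtlety is that a pair of cycle vertices has a unique geodesic unless it is antipodal (which forces $n_2$ even), when it has two; this is the source of the special behaviour of small or even $n_2$ in the statement. Throughout I write $S$ for a general position set and, for a column index $a$, $S_a=S\cap(\{a\}\times V(C_{n_2}))$.

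For the lower bounds, one exhibits an explicit general position set of the claimed cardinality in each regime and checks the criterion above directly: a single triangular cycle-fibre for $P_2\cp C_3$; a four-vertex ``zig-zag'' alternating between two adjacent columns (or occupying four distinct columns) with suitably spread cycle coordinates in the generic case; and, when $n_1\geq 5$ and $n_2=7$ or $n_2\geq 9$, a five-vertex configuration — for odd $n_2$ one may take one vertex in each of five consecutive columns with cycle coordinates in arithmetic progression of common difference $2$, and for even $n_2\geq 10$ a similar but less regular choice (two antipodal pairs plus one further vertex, say) works. The finitely many remaining small values of $n_2$ are checked by hand.

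For the upper bounds I analyse $S$ column by column. First, each $S_a$ is a general position set of $C_{n_2}$, so $|S_a|\leq 3$, and $|S_a|\leq 2$ when $n_2=4$. The key lemma is that a general position triple contained in a single cycle-fibre cannot be extended: if $S_a=\{(a,p),(a,q),(a,r)\}$, then the three short arcs of the triple partition $C_{n_2}$, and one checks that every vertex $z\in V(C_{n_2})$ lies, relative to one of the three pairs, either at an endpoint or in a ``beyond'' position, so that adjoining any $(b,z)$ with $b\neq a$ would create a collinear triple; hence $|S_a|=3$ forces $S=S_a$ and $|S|=3$. Consequently, whenever $|S|\geq 4$ every column has size at most two. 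The second lemma controls the spread of a size-two column: if $S_a=\{(a,p),(a,q)\}$ has cycle-gap $d$, then every other vertex $(b,z)\in S$ has $z$ in the ``safe set'' of $\{p,q\}$ — the $z$ with neither $p$ nor $q$ metrically between $z$ and the other — which has roughly $2d-1$ vertices when $\{p,q\}$ is non-antipodal and is all of $V(C_{n_2})\setminus\{p,q\}$ when it is antipodal. A third lemma records that the cycle coordinates of the size-one columns form a general position set of $C_{n_2}$ (the path coordinates being automatically nested), hence number at most three and obey the same arc restrictions. Writing $|S|=2k+m$ with $k$ size-two and $m$ size-one columns, these three lemmas, together with a split according to the parity and size of $n_2$ and the length $n_1$, force $2k+m\leq 5$ when $n_1\geq 5$ and $n_2=7$ or $n_2\geq 9$, and $2k+m\leq 4$ otherwise; the collapse to $4$ for $n_2\in\{3,4,5,6,8\}$ (or for $n_1\leq 4$) comes from the scarcity of ``generic'' configurations in short cycles and, for even $n_2$, the extra geodesic between antipodal pairs, while $P_2\cp C_3$ drops further to $3$ because there every cycle-fibre is a clique.

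The main obstacle is precisely this upper-bound case analysis: one must track exactly which cycle vertices become forbidden once one or two vertices of a column are chosen, handle the two-geodesics phenomenon for even $n_2$ correctly, and disentangle the borderline lengths — showing that $n_2\in\{5,6,8\}$ admit no general position set of size five while $n_2=7$ and every $n_2\geq 9$ do, and that fewer than five contributing columns are available when $n_1\leq 4$. This is a finite but genuinely delicate enumeration, and pinning the thresholds $n_1\geq 5$ and $n_2\geq 9$ exactly (rather than merely ``$n_1,n_2$ large'') is where the bulk of the effort goes.
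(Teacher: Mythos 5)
First, note that the paper does not prove this statement at all: it is quoted verbatim from the cited reference \cite{Cartesian}, so there is no in-paper argument to compare yours against; your proposal has to stand on its own, and as written it does not. The decisive problem is your third structural lemma, that the cycle coordinates of the size-one columns form a general position set of $C_{n_2}$ and hence number at most three. Collinearity in a Cartesian product requires the \emph{same} vertex to be the metric middle in \emph{both} coordinates, so three singleton columns whose cycle projections have a between-triple are perfectly admissible unless the cycle-middle vertex also sits in the middle column; the projection need not be in general position. Indeed the extremal sets for $n_1\geq 5$ and $n_2\geq 9$ (used later in this paper, e.g.\ $\{(1,2),(2,5),(3,\lfloor n_2/2\rfloor+3),(4,1),(5,4)\}$) consist of five singleton columns, which your lemma would forbid outright; any upper-bound enumeration built on it is therefore unsound and could not ``force $2k+m\leq 5$'' correctly. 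Your explicit lower-bound construction for odd $n_2$ is also wrong: taking one vertex in each of five consecutive columns with cycle coordinates in arithmetic progression of common difference $2$ creates collinear triples already among consecutive columns --- in $P_5\cp C_9$ the vertices $(1,0),(2,2),(3,4)$ satisfy $d\big((1,0),(2,2)\big)+d\big((2,2),(3,4)\big)=3+3=6=d\big((1,0),(3,4)\big)$, so this set is not in general position.

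Beyond these concrete errors, the substance of the theorem --- the upper bounds $4$ and $5$, and the exact thresholds $n_1\geq 5$ and $n_2=7$ or $n_2\geq 9$ as against $n_2\in\{5,6,8\}$ --- is exactly the part you defer to an unexecuted ``finite but genuinely delicate enumeration,'' and your second lemma (the ``safe set'' of a size-two column having ``roughly $2d-1$'' vertices) is too vague to drive it. The distance-additivity criterion and the observation that a fibre carrying three vertices cannot be extended are sensible starting points (the latter still needs an actual proof rather than ``one checks''), but with one supporting lemma false, one exhibited extremal set collinear, and the central case analysis missing, the proposal is a plan for a proof rather than a proof of the cited classification.
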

	
	Trivially $\chi _{\gp }(P_2 \cp C_3) = 2$. It is shown in~\cite{effopendom} that if $n_1$ is odd and $4|n_2$, then $P_{n_1} \cp C_{n_2}$ is an efficient open domination graph, as is the torus graph $C_{n_1} \cp C_{n_2}$ when $4|n_1$ and $4|n_2$.  This implies the following upper bound.
	
	\begin{corollary}
		The $\gp $-chromatic number of cylinder and torus graphs $P_{n_1} \cp C_{n_2}$ ($n_1 \geq 2,n_2 \geq 3$, and $(n_1,n_2) \neq (2,3)$) and $C_{n_1} \cp C_{n_2}$ ($n_1,n_2 \geq 3$) is bounded above by $\frac{n_1n_2}{4}+O(n_1+n_2)$.
	\end{corollary}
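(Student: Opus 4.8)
The goal is an upper bound of $\frac{n_1 n_2}{4} + O(n_1 + n_2)$ on $\chi_{\gp}$ for cylinders $P_{n_1} \cp C_{n_2}$ (excluding the trivial $(2,3)$ case) and tori $C_{n_1} \cp C_{n_2}$. The basic engine is Lemma~\ref{tot domination bound}: the Cartesian products of paths and cycles are diamond-free (no vertex neighbourhood induces a $K_4^-$, since in a Cartesian product of triangle-free factors every neighbourhood is itself triangle-free), so $\chi_{\gp} \leq \gamma_t$ for all of these graphs. Hence it suffices to show that the total domination number of each family is $\frac{n_1 n_2}{4} + O(n_1 + n_2)$.

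**Using efficient open domination.** For the sub-case where $n_1$ is odd and $4 \mid n_2$, the cited result of~\cite{effopendom} says $P_{n_1} \cp C_{n_2}$ is an efficient open domination graph; similarly $C_{n_1} \cp C_{n_2}$ is one when $4 \mid n_1$ and $4 \mid n_2$. In an efficient open domination graph with open-packing dominating set $D$, the open neighbourhoods $\{N(u): u\in D\}$ partition $V(G)$, so $\sum_{u\in D}\deg(u) = |V(G)|$. In a $4$-regular graph (all these cylinders with $n_1\geq 3$ and all tori are $4$-regular) this forces $|D| = |V(G)|/4 = \frac{n_1 n_2}{4}$ exactly, giving $\gamma_t = \frac{n_1 n_2}{4}$ and hence $\chi_{\gp} = \frac{n_1 n_2}{4}$ in that case — even better than the claimed bound.

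**Reducing the general case.** For arbitrary $n_1, n_2$ the plan is to pad up to a convenient residue class and control the error. Given a cylinder $P_{n_1}\cp C_{n_2}$, write $n_1 = n_1' - a$ with $n_1'$ odd and $0 \leq a \leq 2$, and $n_2 = n_2' - b$ with $4 \mid n_2'$ and $0 \leq b \leq 3$. Our graph is an induced subgraph of $P_{n_1'} \cp C_{n_2'}$ obtained by deleting the at most $a\, n_2' + b\, n_1'$ vertices lying in the extra path-layers or cycle-columns. Take the efficient open dominating set of $P_{n_1'}\cp C_{n_2'}$ of size $\frac{n_1' n_2'}{4}$; restrict it to the surviving vertices and add, for each deleted vertex that this restriction fails to dominate, one new vertex to cover it. This yields a total dominating set of $P_{n_1}\cp C_{n_2}$ of size at most $\frac{n_1' n_2'}{4} + a n_2' + b n_1' = \frac{n_1 n_2}{4} + O(n_1 + n_2)$, since $n_1' \leq n_1 + 2$ and $n_2' \leq n_2 + 4$. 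The torus case is identical, padding both coordinates up to multiples of $4$. Combined with Lemma~\ref{tot domination bound}, this gives $\chi_{\gp} \leq \gamma_t \leq \frac{n_1 n_2}{4} + O(n_1+n_2)$.

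**Expected obstacle.** The only delicate point is verifying that restricting an efficient open dominating set of the padded graph, plus a linear number of patches, genuinely totally dominates the smaller graph — one must check that no surviving vertex loses *all* of its dominators when the extra layers are removed, or else absorb such vertices into the same $O(n_1+n_2)$ correction term (they all lie within bounded distance of the deleted columns/layers, so their number is $O(n_1+n_2)$). An alternative, cleaner route that sidesteps this bookkeeping is to quote directly a known upper bound $\gamma_t(P_{n_1}\cp C_{n_2}), \gamma_t(C_{n_1}\cp C_{n_2}) = \frac{n_1 n_2}{4} + O(n_1+n_2)$ from the total-domination literature (analogous to the grid bound of~\cite{Grav} already used above), in which case the corollary is immediate from Lemma~\ref{tot domination bound}.
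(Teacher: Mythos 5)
Your high-level plan --- diamond-freeness plus Lemma~\ref{tot domination bound} plus the efficient open domination result of~\cite{effopendom} --- is exactly the route the paper intends (the corollary is stated without an explicit proof immediately after these two ingredients are introduced). However, two steps as written do not hold up. First, cylinders $P_{n_1}\cp C_{n_2}$ are not $4$-regular: the $2n_2$ vertices of the two end layers have degree $3$, so the partition identity $\sum_{u\in D}\deg(u)=n_1n_2$ only gives $|D|\le \frac{n_1n_2}{4}+\frac{n_2}{2}$, not exact equality (harmless for the corollary, but your claim $\gamma_t(P_{n_1}\cp C_{n_2})=\frac{n_1n_2}{4}$ is unjustified). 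Second, and more seriously, the padding step fails in the cyclic coordinate: $C_{n_2}$ is \emph{not} an induced subgraph of $C_{n_2'}$ for $n_2<n_2'$ --- deleting columns from the longer cylinder yields a grid, not a shorter cylinder, because the wrap-around adjacency is missing --- so you cannot restrict a dominating set of $P_{n_1'}\cp C_{n_2'}$ to obtain one of $P_{n_1}\cp C_{n_2}$, and the same objection applies in both coordinates of the torus.

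The gap is easy to close, and your own ``alternative route'' essentially does it using material already in the paper: total domination can only decrease when edges are added, and $P_{n_1}\cp P_{n_2}$ is a spanning subgraph of both $P_{n_1}\cp C_{n_2}$ and $C_{n_1}\cp C_{n_2}$, so $\gamma_t$ of either graph is at most $\gamma_t(P_{n_1}\cp P_{n_2})\le\left\lfloor\frac{(n_1+2)(n_2+2)}{4}\right\rfloor-4=\frac{n_1n_2}{4}+O(n_1+n_2)$ by the bound of~\cite{Grav} quoted earlier (the restriction $n_1,n_2\ge 16$ there is immaterial, since otherwise $n_1n_2=O(n_1+n_2)$). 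Combined with the fact that cylinders and tori are diamond-free --- which is true, though not for the reason you give: ``every neighbourhood is triangle-free'' does not exclude a $P_3$ inside a neighbourhood, and $C_3$ is not a triangle-free factor; one should instead check directly that no edge lies in two triangles, equivalently that every open neighbourhood induces a graph of maximum degree at most one --- Lemma~\ref{tot domination bound} then finishes the proof. So the corollary is correct and your plan is the intended one, but the tessellate-and-pad argument must be repaired or replaced as above.
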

	
	When $\gp (P_{n_1} \cp C_{n_2}) = 5$ we obtain a better upper bound.
	\begin{theorem}
		When $n_1 \geq 5$ and $n_2 = 7$ or $n_2 \geq 9$, \[ \chi _{\gp }(P_{n_1} \cp C_{n_2}) \leq \frac{n_1n_2}{5}+2n_2.\]
	\end{theorem}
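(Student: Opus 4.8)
The plan is to pack almost all of $V(P_{n_1}\cp C_{n_2})$ into general position sets of the largest possible size, namely $5$ (this is the point of Theorem~\ref{thm:gp of cylinder}, whose hypotheses match the hypotheses here), leaving only $O(n_1+n_2)$ vertices uncovered; each leftover vertex then gets its own colour, which is harmless since a single vertex is trivially a general position set.

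First I would fix an explicit general position set $T$ of order $5$ having exactly one vertex in each of the rows $1,2,3,4,5$, say $T=\{(i,c_i):1\le i\le 5\}$ with $c_1,\dots,c_5\in\mathbb{Z}_{n_2}$ pairwise distinct. Such a $T$ exists because $\gp(P_{n_1}\cp C_{n_2})=5$ in this range, and one can extract a suitable one from~\cite{Cartesian}. Concretely I expect to take the columns to appear around $C_{n_2}$, in cyclic order, as $c_4,c_1,c_5,c_2,c_3$, with the five arc lengths between consecutive columns equal to $1,1,1$ and then two nearly equal halves of what is left (a tiny variant handles $n_2$ even, and the pattern $1,1,1,2,2$ handles $n_2=7$). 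The reason for this shape is that the $C_{n_2}$-geodesic between any two of these five columns is the short arc, which then misses the remaining columns in the right way, so that for each of the ten row-triples $a<b<d$ the vertex $(b,c_b)$ fails to lie on any shortest $(a,c_a)$--$(d,c_d)$ path.

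Next I would tile. Cyclic rotation of the $C_{n_2}$-factor is an automorphism, so every translate $T+(0,j)$ with $j\in\mathbb{Z}_{n_2}$ is again a general position set, and because $T$ meets each of the five rows exactly once these $n_2$ translates partition the vertex set of the subgraph on rows $1,\dots,5$. Translating vertically as well, for $0\le k\le \lfloor n_1/5\rfloor-1$ and $j\in\mathbb{Z}_{n_2}$ the set $T_{k,j}=\{(5k+i,c_i+j):1\le i\le 5\}$ is a general position set of $P_{n_1}\cp C_{n_2}$: any shortest path joining two vertices of $T_{k,j}$ stays inside rows $5k+1,\dots,5k+5$, and that induced subgraph is an isometric copy of $P_5\cp C_{n_2}$, so the claim reduces to the general position property of $T$. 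The $\lfloor n_1/5\rfloor n_2$ sets $T_{k,j}$ partition every vertex in rows $1,\dots,5\lfloor n_1/5\rfloor$ and use $n_2$ colours per block. The at most four remaining rows hold at most $4n_2$ vertices, which I colour individually, for a total $\chi_{\gp}(P_{n_1}\cp C_{n_2})\le\lfloor n_1/5\rfloor n_2+4n_2\le\frac{n_1n_2}{5}+O(n_1+n_2)$.

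The hard part will be the first step: exhibiting the order-$5$ general position set $T$ with a tileable shape and checking its general position property. This is genuinely a small inequality problem rather than a one-line verification, because by the Erd\H{o}s--Szekeres theorem no $5$-element general position set can be confined to a bounded range of columns — read off by rows, such a set would contain a monotone triple and hence three vertices on a common geodesic — so $T$ must wind all the way around $C_{n_2}$, and the check becomes an analysis of the five arc lengths around the cycle. Once a suitable $T$ is pinned down, the tiling and the bookkeeping are routine.
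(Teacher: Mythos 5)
Your proposal is correct and follows essentially the same route as the paper: it likewise takes explicit order-$5$ general position sets from~\cite{Cartesian} with exactly one vertex in each of five consecutive rows, rotates them around the $C_{n_2}$-factor to tessellate a five-row block, repeats the block every five rows, and handles the at most four leftover rows with $O(n_2)$ extra colours. The only difference is that the paper simply quotes the explicit sets (for $n_2\geq 9$ the columns in cyclic order have arc lengths $1,2,1$ and two near-halves rather than your guessed $1,1,1$ pattern), so the ``hard part'' you flag is resolved by citation exactly as you suggest.
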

	\begin{proof}
		It is shown in~\cite{Cartesian} that $\gp $-sets of $P_{n_1} \cp C_{n_2}$ are given by \[ \{ (1,0),(2,2),(3,4),(4,6),(5,1)\} , \{ (1,1),(2,4),(3,\left \lfloor \frac{n_2}{2} \right \rfloor +2),(4,0),(5,3)\} \] for $n_1 \geq 5$ and $n_2 = 7$ and $n_1 \geq 5$ and $n_2 \geq 9$ respectively. By `rotating' these sets around the cylinder, i.e. adding the same amount to each of the second coordinates, we tessellate a section of the cylinder perfectly. We can repeat this argument by adding $5t$ to the first coordinates of these sets for $t \leq \lfloor \frac{n_1}{5} \rfloor -1$. In this way at most $4n_2$ vertices are not contained in colour classes of size five. If $n_1 \equiv 0 \pmod{4}$, then we have partitioned $V(P_{n_1} \cp C_{n_s})$ into $\gp $-sets of size five. If $n_1 \equiv 2,4 \pmod{5}$, then the remaining vertices can be partitioned into rotated copies of the general position set $\{ (1,0),(2,1),(1,\left \lfloor \frac{n_2}{2}\right \rfloor ),(2,\left \lfloor \frac{n_2}{2} \right \rfloor +1)\} $.
	\end{proof}
	
	The classification of general position numbers of torus graphs was completed in~\cite{torus}. The details for small toruses $C_{n_1} \cp C_{n_2}$ are complicated, so we remark only that the largest possible general position number of a torus is seven and this is always achieved if $n_1 \geq n_2 \geq 13$. We show that for infinitely many toruses we can meet the lower bound $\frac{n_1n_2}{7}$. When $n_1 \geq n_2 \geq 49$, $7|n_1$ and $7|n_2$, the first coordinates of the vertices in the general position sets exhibited in~\cite{torus} are $\frac{in_1}{7}$ for $0 \leq i \leq 6$. Therefore, following the strategy above, we can rotate these sets in the second coordinate to cover seven rows of the torus with $\gp $-sets of order seven. By then rotating these strips in the first coordinate we obtain a tessellation of the vertices of the torus.

	\begin{corollary}
		When $s \geq t \geq 7$, \[ \chi _{\gp }(C_{7s} \cp C_{7t}) = 7st.\] 
	\end{corollary}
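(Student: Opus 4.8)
The plan is to prove the two inequalities $\chi_{\gp}(C_{7s} \cp C_{7t}) \ge 7st$ and $\chi_{\gp}(C_{7s} \cp C_{7t}) \le 7st$ separately: the first from the generic counting bound of Lemma~\ref{lem:ni/xi bound}, the second by making the tessellation sketched in the paragraph preceding the statement precise.

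For the lower bound I would observe that $7s \ge 7t \ge 49 \ge 13$, so the classification of general position numbers of torus graphs in~\cite{torus} gives $\gp(C_{7s} \cp C_{7t}) = 7$. Since the torus has $49st$ vertices, Lemma~\ref{lem:ni/xi bound} then yields $\chi_{\gp}(C_{7s} \cp C_{7t}) \ge \lceil 49st/7 \rceil = 7st$.

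For the matching upper bound I would exhibit a partition of the vertex set into $7st$ general position sets, each of order $7$. By~\cite{torus} there is a $\gp$-set $S$ of order $7$ whose seven vertices have first coordinates $0, s, 2s, \dots, 6s$ (these are $\tfrac{i n_1}{7}$ with $n_1 = 7s$); since $|S| = \gp = 7$, there is exactly one vertex of $S$ in each of these rows, so write $S = \{(is, c_i) : 0 \le i \le 6\}$ with $c_i \in \mathbb{Z}_{7t}$. Because $C_{7s} \cp C_{7t}$ is the Cayley graph of $\mathbb{Z}_{7s} \times \mathbb{Z}_{7t}$, every translation is an automorphism, so every translate of $S$ is again a $\gp$-set. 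For $0 \le p \le s-1$ and $q \in \mathbb{Z}_{7t}$ set $S_{p,q} = \{(is+p,\, c_i+q) : 0 \le i \le 6\}$. These are $s \cdot 7t = 7st$ general position sets, and I claim they partition $V(C_{7s} \cp C_{7t})$: the map $(i,p) \mapsto is+p$ is a bijection from $\{0,\dots,6\} \times \{0,\dots,s-1\}$ onto $\mathbb{Z}_{7s}$, so a vertex $(a,b)$ determines a unique pair $(i,p)$ with $a = is+p$, and then $q = b - c_i$ is the unique second-coordinate shift placing $(a,b)$ into $S_{p,q}$. Combining this with the lower bound gives $\chi_{\gp}(C_{7s} \cp C_{7t}) = 7st$.

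The only non-routine ingredient is the precise structure of the extremal sets in~\cite{torus}: one needs that in this range the extremal $\gp$-sets can be taken to be ``row transversals'' of the seven equally spaced rows $0, s, \dots, 6s$, so that rotating such a set around the second coordinate tessellates those seven rows and a further $s$-fold rotation in the first coordinate tessellates the whole torus. Granting this (it is exactly what is asserted in the paragraph before the statement, and is directly analogous to the order-$5$ cylinder tessellation used earlier in this section), the remaining verifications are elementary bookkeeping, since translation-invariance of the general position property on a Cayley graph is immediate.
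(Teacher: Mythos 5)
Your proof is correct and follows essentially the same route as the paper: the counting lower bound from Lemma~\ref{lem:ni/xi bound} (using $\gp(C_{7s}\cp C_{7t})=7$ from~\cite{torus}) together with a tessellation by translates of the order-seven $\gp$-set whose first coordinates are $0,s,\dots,6s$. If anything, your bookkeeping is tighter than the paper's sketch, which says to shift the strips in the first coordinate by $1\le j\le 6$ — literally correct only when $s=7$ — whereas your range $0\le p\le s-1$ handles all $s\ge 7$.
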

	
	We also mention briefly the strong product of paths, i.e. a strong grid. It is shown in~\cite{KlavzarYero} that $\gp (P_{n_1} \boxtimes P_{n_2}) = 4$, but in~\cite{Cicerone} that $\mu (P_{n_1} \boxtimes P_{n_2}) = 2(n_1+n_2)-4$, with a maximum mutual-visibility set corresponding to the `border' of the grid.
	\begin{theorem}
		For $2 \leq n_1 \leq n_2$, $\chi _{\gp }(P_{n_1} \boxtimes P_{n_2}) \leq \frac{n_1n_2}{4}+n_2$ and $\chi _{\gp }(P_{n_1} \boxtimes P_{n_2}) = \frac{n_1n_2}{4}$ when $n_1$ and $n_2$ are even. The $\mu $-chromatic number is given by $\chi _{\mu }(P_{n_1} \boxtimes P_{n_2}) = \left \lceil \frac{n_1}{2} \right \rceil $ (unless $n_1 = 2$, in which case $\chi _{\mu }(P_{n_1} \boxtimes P_{n_2}) = 2$).
	\end{theorem}
	\begin{proof}
		The vertex set of the strong grid can be divided into cliques of order four, with at most $n_1+n_2-1$ remaining vertices, which yields the bound for $\gp $-colourings. As any mutual-visibility set of the strong grid can contain at most two vertices along any diagonal, there must be at least $\left \lceil \frac{n_1}{2} \right \rceil $ colours in any $\mu $-colouring. If $n_1 \geq 3$ a $\mu $-colouring is given by the colour classes $\{ i,r+i\} \times [n_2]$ for $1 \leq i \leq \left \lfloor \frac{n_1}{2} \right \rfloor $, with an additional colour class $\{ n_1\} \times [n_2]$ if $n_1$ is odd.
	\end{proof}
	This is another example that shows that $\chi _{\gp }(G)$ can be arbitrarily large for a fixed value of $\chi _{\mu }(G)$,

	\section{Computational complexity}\label{sec:complexity}
	To study the computational complexity of finding a $\gp$-colouring of a graph that uses the smallest possible number of colours, we introduce the following decision problem.
	\begin{definition}
		{\sc GP-colouring} problem: \\
		{\sc Instance}: A graph $G=(V,E)$, a positive integer $k\leq |V|$. \\
		{\sc Question}: Is there a $\gp$-colouring of $G$ such that $\chi_{\gp }(G)\leq k$?
	\end{definition}
	The next theorem shows that the problem is hard to solve.
	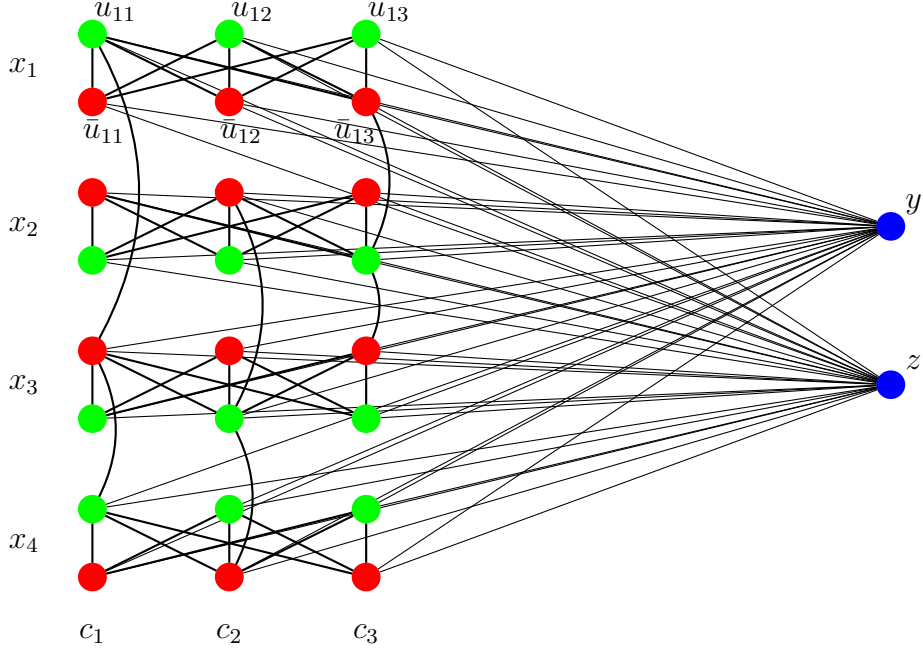
\begin{figure}[t]
		\begin{center}
			\begin{tikzpicture}[x=0.2mm,y=-0.2mm,inner sep=0.1mm,scale=1.50,
				thick,vertex/.style={circle,draw,minimum size=10,font=\tiny,fill=white},edge label/.style={fill=white}]
				\usetikzlibrary {backgrounds}
				
				\node (x1) at (-80,0) {$x_1$};
				\node (x2) at (-80,70) {$x_2$};
				\node (x3) at (-80,140) {$x_3$};
				\node (x4) at (-80,210) {$x_4$};
				\node (y) at (310,60) {$y$};
				\node (z) at (310,130) {$z$};
				\node (u11) at (-40,-25) {$u_{11}$};
				\node (u12) at (20,-25) {$u_{12}$};
				\node (u13) at (80,-25) {$u_{13}$};
				\node (nu11) at (-45,28) {$\bar u_{11}$};
				\node (nu12) at (15,28) {$\bar u_{12}$};
				\node (nu13) at (65,28) {$\bar u_{13}$};
				\node (c1) at (-50,250) {$c_1$};
				\node (c2) at (10,250) {$c_2$};
				\node (c3) at (70,250) {$c_3$};
				\path[draw=black] 
				(-50,-15) edge[bend left] (-50,125)
				(-50,125) edge[bend left] (-50,195)
				
				(10,55) edge[bend left] (10,155)
				(10,155) edge[bend left] (10,225)
				
				(70,15) edge[bend left] (70,85)
				(70,85) edge[bend left] (70,125)
				;

				\node at (300,70)  [vertex,color=blue] (y) {$y$};
				\node at (300,140) [vertex,color=blue] (z) {z};
				
				\node at (-50,-15) [vertex,color=green] (a1) {$x_1$};
				\node at (10,-15) [vertex,color=green] (a2) {$x_2$};
				\node at (70,-15) [vertex,color=green] (a3) {$x_3$};
				
				\node at (-50,15) [vertex,color=red] 
				(b1) {$y_1$};
				\node at (10,15) [vertex,color=red] (b2) {$y_2$};
				\node at (70,15) [vertex,color=red] (b3) {$y_3$};

				\path
				(a1) edge (b1)
				(a1) edge (b2)
				(a1) edge (b3)
				
				(a2) edge (b2)
				(a2) edge (b1)
				(a2) edge (b3)
				
				(a3) edge (b3)
				(a3) edge (b2)
				(a3) edge (b1)
				;
				\begin{scope}[on background layer]
					\path[draw=lightgray]
					(y) edge (a1) edge (a2) edge (a3)
					(y) edge (b1) edge (b2) edge (b3)
					(z) edge (a1) edge (a2) edge (a3)
					(z) edge (b1) edge (b2) edge (b3)
					;
				\end{scope}
				\node at (-50,55) [vertex,color=red] (a1) {$x_1$};
				\node at (10,55) [vertex,color=red] (a2) {$x_2$};
				\node at (70,55) [vertex,color=red] (a3) {$x_3$};
				
				\node at (-50,85) [vertex,color=green] 
				(b1) {$y_1$};
				\node at (10,85) [vertex,color=green] (b2) {$y_2$};
				\node at (70,85) [vertex,color=green] (b3) {$y_3$};

				\path
				(a1) edge (b1)
				(a1) edge (b2)
				(a1) edge (b3)
				
				(a2) edge (b2)
				(a2) edge (b1)
				(a2) edge (b3)
				
				(a3) edge (b3)
				(a3) edge (b2)
				(a3) edge (b1)	
				;
				\begin{scope}[on background layer]\path[draw=lightgray]
					(y) edge (a1) edge (a2) edge (a3)
					(y) edge (b1) edge (b2) edge (b3)
					(z) edge (a1) edge (a2) edge (a3)
					(z) edge (b1) edge (b2) edge (b3)
					;
				\end{scope}
				\node at (-50,125) [vertex,color=red] (a1) {$x_1$};
				\node at (10,125) [vertex,color=red] (a2) {$x_2$};
				\node at (70,125) [vertex,color=red] (a3) {$x_3$};
				
				\node at (-50,155) [vertex,color=green] 
				(b1) {$y_1$};
				\node at (10,155) [vertex,color=green] (b2) {$y_2$};
				\node at (70,155) [vertex,color=green] (b3) {$y_3$};

				\path
				(a1) edge (b1)
				(a1) edge (b2)
				(a1) edge (b3)
				
				(a2) edge (b2)
				(a2) edge (b1)
				(a2) edge (b3)
				
				(a3) edge (b3)
				(a3) edge (b2)
				(a3) edge (b1)		
				;
				\begin{scope}[on background layer]
					\path[draw=lightgray]
					(y) edge (a1) edge (a2) edge (a3)
					(y) edge (b1) edge (b2) edge (b3)
					(z) edge (a1) edge (a2) edge (a3)
					(z) edge (b1) edge (b2) edge (b3)
					;
				\end{scope}
				\node at (-50,195) [vertex,color=green] (a1) {$x_1$};
				\node at (10,195) [vertex,color=green] (a2) {$x_2$};
				\node at (70,195) [vertex,color=green] (a3) {$x_3$};
				
				\node at (-50,225) [vertex,color=red] 
				(b1) {$y_1$};
				\node at (10,225) [vertex,color=red] (b2) {$y_2$};
				\node at (70,225) [vertex,color=red] (b3) {$y_3$};

				\path
				(a1) edge (b1)
				(a1) edge (b2)
				(a1) edge (b3)
				
				(a2) edge (b2)
				(a2) edge (b1)
				(a2) edge (b3)
				
				(a3) edge (b3)
				(a3) edge (b2)
				(a3) edge (b1)		
				;
				\begin{scope}[on background layer]
					\path[draw=lightgray]
					(y) edge (a1) edge (a2) edge (a3)
					(y) edge (b1) edge (b2) edge (b3)
					(z) edge (a1) edge (a2) edge (a3)
					(z) edge (b1) edge (b2) edge (b3)
					;
				\end{scope}
				
			\end{tikzpicture}
		\end{center}
		\caption{The graph $G$ used in Theorem~\ref{theo:gpc-NP} corresponding to an instance $\Phi=(X,C)$ of {\sc NAE3-SAT} with $X=\{x_1,x_2,x_3,x_4\}$ and $C=\{c_1,c_2,c_3\}$, where $c_1=\{x_1,x_3,x_4\}$, $c_2=\{x_2,\bar x_3, \bar x_4\}$ and $c_3=\{\bar x_1,\bar x_2,x_3\}$. 
			To each variable $x_i$ is associated a $K^{(i)}$ subgraph isomorphic to $K_{3,3}$, and to each form of the variable, $x_i$ or $\bar x_i$, are associated three vertices of the $K^{(i)}$ subgraph (namely $u_{ij}$ or $\bar u_{ij}$, $j\in [3]$), that is one vertex for each clause.
			The three clauses are represented by the three $P_3$ paths with bent edges. Green and red colours represent True and False assignments to the variables given by a function $t$. In this case $t(x_1)=True$, $t(x_2)=False$, $t(x_3)=False$ and $t(x_4)=True$, and $\Phi$ is satisfied. Accordingly, the colouring of $G$ is a $\gp$-colouring (since $\diam(G)=2$ and each $P_3$ subgraph is not monochromatic) and $\chi_{\gp}(G) =3$.}
		\label{fig:NP}
	\end{figure} 
	
	\begin{theorem}\label{theo:gpc-NP}
		{\sc GP-Colouring} is NP-complete even for instances $(G,k)$ with \newline $\diam(G) = 2$ and $k=3$.  
	\end{theorem}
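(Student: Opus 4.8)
The plan is to establish NP-membership trivially and then prove NP-hardness by a reduction from \sat, exploiting the picture in Figure~\ref{fig:NP}. Membership in NP is immediate: given a colouring with at most $k$ colours, one can verify in polynomial time that each colour class is a general position set, since $G$ has diameter two and so the only shortest paths one needs to check are single edges and paths of length two through a common neighbour; the number of such paths is polynomial.

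For hardness, I would start from an instance $\Phi = (X,C)$ of \sat with variable set $X = \{x_1,\dots,x_p\}$ and clause set $C = \{c_1,\dots,c_q\}$, each clause consisting of three literals. Following the figure, for each variable $x_i$ I build a copy $K^{(i)}$ of $K_{3,3}$, whose two sides I think of as the three vertices $u_{i1},u_{i2},u_{i3}$ associated with the literal $x_i$ and the three vertices $\bar u_{i1},\bar u_{i2},\bar u_{i3}$ associated with $\bar x_i$ (one vertex of each type per clause). I then add two special vertices $y,z$ joined to every vertex of every $K^{(i)}$, which forces $\diam(G) = 2$ and, by Theorem~\ref{thm:diam three}, makes every $\gp$-colouring the same as a proper colouring. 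Finally, for each clause $c_j$ with its three literals, I add a path $P_3$ on the three vertices of $G$ corresponding to those literals (the ``bent edges'' in the figure). The construction is clearly polynomial.

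The crux is the correspondence between truth assignments and $3$-colourings. In $K_{3,3}$ a proper $2$-colouring must colour one side entirely with one colour and the other side entirely with the other; so in a $3$-colouring that avoids using a third colour on $K^{(i)}$, the literal-gadget forces all $u_{ij}$ to share a colour and all $\bar u_{ij}$ to share the complementary colour — this is precisely a Boolean value for $x_i$. Reserving two colours, say \emph{green} $=$ True and \emph{red} $=$ False, I would show: (i) $y$ and $z$ must receive a common colour distinct from green and red (they are non-adjacent, both adjacent to everything, and if either took green or red it would clash somewhere in every $K^{(i)}$), so a $3$-colouring uses exactly the palette $\{$green, red, the $yz$-colour$\}$; (ii) since $y,z$ use the third colour, every $K^{(i)}$ must be $2$-coloured with green/red, yielding a truth assignment $t$; (iii) the added $P_3$ for clause $c_j$ is a shortest path, hence monochromatic classes are forbidden along it, which says exactly that the three literals of $c_j$ are not all True and not all False — i.e. $t$ satisfies $\Phi$ in the NAE sense. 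Conversely, given a satisfying $t$, colouring the literal vertices green/red according to $t$ and giving $y,z$ the third colour produces a valid $\gp$-colouring with $3$ colours, since diameter two makes ``general position'' equivalent to ``proper'' and every clause path is non-monochromatic. Hence $\chi_{\gp}(G) \le 3$ iff $\Phi$ is satisfiable, and combined with the obvious fact that $\chi_{\gp}(G) \ge 3$ is forced by the gadget (any $K_{3,3}$ needs two colours and the clause paths plus $y,z$ force a third), we get $\chi_{\gp}(G) = 3$ exactly when $\Phi$ is a yes-instance.

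I expect the main obstacle to be the careful bookkeeping in step (i)–(ii): one must argue rigorously that no $3$-colouring can ``spread'' green or red onto $y$ or $z$ and still be proper, and that conversely the constraints never over-determine the colouring (e.g. that the clause-path edges do not accidentally force a fourth colour in some configuration). This amounts to checking that the only proper colourings with $3$ colours are exactly the ones coming from truth assignments, which requires looking at how the $P_3$-gadgets interact across shared literal vertices; I would handle this by noting each clause contributes a disjoint $P_3$ on vertices of three \emph{different} $K^{(i)}$'s, so the gadgets impose independent ``not-all-equal'' constraints and nothing more. The diameter-two observation (Theorem~\ref{thm:diam three}) does most of the heavy lifting in converting the position condition into a pure graph-colouring condition, which is what keeps the argument manageable.
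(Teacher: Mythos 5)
Your construction is essentially the paper's (variable gadgets that are complete bipartite, clause gadgets that are $P_3$'s joining one literal-vertex per variable, and two dominating non-adjacent vertices $y,z$), but the argument you build on top of it contains a genuine error: you claim that in a diameter-two graph a $\gp$-colouring is \emph{the same as} a proper colouring, citing Theorem~\ref{thm:diam three}. That theorem only gives one implication (every proper colouring is a $\gp$-colouring, whence $\chi_{\gp}(G)\le\chi(G)$); the converse is false, and badly so --- the paper itself notes that $K_1\vee(kK_k)$ has diameter two, $\chi_{\gp}=2$ and chromatic number about $k$, since a clique is always in general position. The correct characterisation in diameter two is: a colour class $S$ is in general position iff no two \emph{non-adjacent} vertices of $S$ have a common neighbour in $S$. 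Your false equivalence breaks both directions of the reduction. In the forward direction, the colouring you produce from a satisfying NAE-assignment is typically \emph{not} proper (a clause path may contain two adjacent vertices of the same colour), so your own verification criterion rejects it; one must instead check the general position condition directly, by classifying the common neighbours $w$ of two same-coloured non-adjacent vertices $u,v$ ($w\in\{y,z\}$, $w$ in the same gadget, or $w$ on a clause path) and showing $w$ always gets the other colour --- this case analysis is the substance of the paper's proof and is absent from yours. In the backward direction, your steps (i) and (ii) are unjustified: nothing a priori prevents $y$ from sharing its colour with some adjacent vertices (a clique is a legitimate $\gp$-class), nor a colour class inside a gadget from mixing the two sides. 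The paper closes these gaps with a counting argument (if $y$ and $z$ had different colours, each of those colours could cover at most two further vertices since the gadgets are triangle-free, leaving $|V'|-4\ge 14$ vertices in one class and hence a monochromatic $K^{(i)}$, which is not in general position) and with the observation that a $\gp$-class meeting one side of $K^{(q,q)}$ twice cannot meet the other side at all, which with only two available colours and $q\ge 3$ forces the side/side partition.

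Two secondary points. First, the variable gadget must be $K_{q,q}$ with one vertex per clause on each side, not $K_{3,3}$: with a fixed $K_{3,3}$ and $q>3$ clauses you cannot give each clause its own private vertex in each gadget, and then two clause paths could share an endpoint or create an edge between the intended endpoints of another clause path, ruining the claim that each $P^{(j)}$ is an induced shortest path. Second, your parenthetical claim that $\chi_{\gp}(G)\ge 3$ is ``obvious because any $K_{3,3}$ needs two colours and the clause paths plus $y,z$ force a third'' again implicitly uses properness; the honest argument is that two colours cannot even $\gp$-colour a single gadget together with a non-monochromatic constraint from $y,z$, and this too needs the general-position (not proper-colouring) analysis. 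In short: right gadgetry, wrong key lemma; the proof must be redone with the genuine diameter-two characterisation of general position sets.
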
  
	
	\begin{proof} 
		Given a colouring of $G$, it is possible to test in polynomial time whether it is a $\gp$-colouring or not. Consequently, the problem is in NP. 
		We prove that the {\sc NAE3-SAT} problem, shown to be NP-complete in~\cite{schafer-1978}, polynomially reduces to {\sc GP-Colouring}.
		\begin{quote}  
			A {\sc NAE3-SAT} (\emph{not-all-equal 3-satisfiability}) instance $\Phi$ is defined as a set $X=\{x_1,x_2,\ldots,x_p\}$ of $p$ Boolean variables and a set $C$ of $q$ clauses, each  defined as a set of three literals: every variable $x_i$ corresponds to two literals $x_i$ (the positive form) and $\bar x_i$ (the negative form). To simplify the notations we will denote by $\{\ell_1, \ell_2, \ell_3\}$ the clause with literals $\ell_i, i\in[3]$, without distinction between the orders in which they are listed. A truth assignment assigns a Boolean value ($True$ or $False$) to each variable, corresponding to a truth assignment of opposite values for the two literals $x_i$ and $\bar x_i$: $\bar x_i$ is $True$ if and only if $x_i$ is $False$. 
			
			The {\sc NAE3-SAT} problem asks whether there is a truth assignment  to the variables such that in no clause all three literals have the same truth value. We will say that such an assignment is \emph{satisfying} and the instance $\Phi $ is \emph{satisfied}.
		\end{quote}
		In what follows, we may assume that any instance of {\sc NAE3-SAT} has at least three clauses and that each clause involves three different variables. Indeed, if a clause involves a single variable and all the literals are equal, then the {\sc NAE3-SAT} problem has a No answer, otherwise any instance $\Phi$ that does not satisfy the above assumption can be transformed into an equivalent instance $\Phi'$ satisfying it. If the positive and negative form of a variable are present in a clause we can remove the clause from the instance.
		If a clause involves only two variables, and the variable appearing two times in the clauses is present in the same form, e.g. $\{l_1,l_1,l_2\}$, then the clause is logically equivalent to $\{l_1,l_2\}$ and it can be substituted with two clauses $\{l_1,l_2, a\}$ and $\{l_1,l_2, \bar a\}$, where $a$ is an additional variable. Then the {\sc NAE3-SAT} instance $\Phi$ has a Yes answer if and only if $\Phi'$ has a Yes answer.
		
		We reduce any instance of {\sc NAE3-SAT} to an instance of {\sc GP-Colouring}. Let $\Phi=(X,C)$, with $X = \{x_1,x_2,\ldots,x_p\}$
		and $C = \{c_1,c_2,\ldots,c_q\}$, be any instance of {\sc NAE3-SAT}. We must construct a graph $G = (V,E)$ and a positive integer $k \leq |V|$ such that $G$ has a $\gp$-colouring  of size $k$ or less if and only if $\Phi$ admits a satisfying truth assignment.
		
		For each variable $x_i\in X$, $i\in [p]$, there is a true-setting  subgraph $K^{(i)}=(V_i,E_i)$ of $G$ with $V_i=\{u_{ij},\bar u_{ij}~|~j\in[q]\}$ and $E_i=\{u_{ia}\bar u_{ib}~|~a,b\in[q]\}$. Then each subgraph $K^{(i)}$ is isomorphic to the complete bipartite graph $K_{q,q}$. Notice that, by Theorem~\ref{thm:multipartite}, $K^{(i)}$ can be coloured with two colours and, since $q\geq 3$, all the vertices $u_{ij}$, for $j\in[q]$, must have the same colour, whereas the other colour is reserved for the vertices $\bar u_{ij}$.
		
		For each clause $c_j \in C$, $j\in [q]$, there is subgraph $P^{(j)}=(V^j,E^j)$ of $G$ isomorphic to a $P_3$ (a path graph with three vertices). If the clause $c_j$ contains the variables $x_{i_1},x_{i_2}$, and $x_{i_3}$, then the three vertices of $P^{(j)}$ belong to $K^{(i_1)}$, $K^{(i_2)}$, and $K^{(i_3)}$. In particular, if $x_{i_1}$ appears in $c_j$ in positive (negative, resp.) form, then $u_{i_1j}$ ( $\bar u_{i_1j}$, resp.) is a vertex of $P^{(j)}$.  The same is true of the vertices corresponding to variables $x_{i_2}$ and $x_{i_3}$. The order in which the vertices appear in the path is not relevant.
		
		There are two more non-adjacent vertices in $V$, which we call $y$ and $z$. We set these two vertices to be adjacent to all the other vertices in $G = (V,E)$. Then
		
		$$ V=\bigcup_{i\in [p]} V_i \cup\{y,z\}$$
		
		$$ E=\bigcup_{i\in [p]} E_i \cup \bigcup_{j\in[q]} E^j\cup\{u_{ij}y,\bar u_{ij}y,u_{ij}z,\bar u_{ij}z~|~i\in[p],j\in[q]\}$$

		A representation of $G$ is given in Figure~\ref{fig:NP}.
		
		The construction of our instance of {\sc GP-Colouring} is completed by setting $k= 3$.
		It is easy to see how the construction can be accomplished in polynomial time and that $\diam(G)=2$. All that remains to be shown is that $\Phi$ is satisfied if and only if
		$G$ has a $\gp$-colouring of size $k$ or less.
		
		First, suppose that $t: X\rightarrow \{True,False\}$ is a satisfying truth assignment for $C$. 
		Then we colour the vertices in $G$ with three colours, namely red, green and blue, depending on $t$. The vertices $y$ and $z$ are coloured blue. If $t(x_i)$ is $True$ for a given $i\in [p]$, then all the vertices $u_{ij}$, $j\in[q]$, in subgraph $K^{(i)}$ are coloured green and all the vertices $\bar u_{ij}$ are coloured red. Conversely, if $t(x_i)$ is $False$, then all the vertices $u_{ij}$ are coloured red and all the vertices $\bar u_{ij}$ are coloured green.
		
		We now show that this colouring is a $\gp$-colouring. Clearly, the blue colour class $\{ y,z\} $ is in general position. Consider now two green vertices $u,v$; we demonstrate that any shortest $u,v$-path in $G$ does not contain another green vertex.  If $u$ and $v$ both belong to $K^{(i)}$, for the same $i\in [p]$, then they are at distance two and all the paths $u,w,v$ connecting them are such that $w$ is a red vertex in  $K^{(i)}$, or $w$ is blue, i.e.\ $w \in \{y,z\}$. If $u$ belongs to $K^{(i)}$ and $v$ belongs to $K^{(h)}$ for some $h\in[p]$ such that $h\not = i$, they could be adjacent only if they belong to a $P^{(j)}$ path for some $j\in[q]$. Instead, if they are at distance two, then all the paths $u,w,v$ connecting them are such that:
		\begin{enumerate}
			\item $w\in \{y,z\}$;
			\item $w$ is in $K^{(i)}$ or in $K^{(h)}$;
			\item $w$ is in $K^{(\ell)}$ with $\ell\not \in\{i,h\}$.
		\end{enumerate}
		In case 1) $w$ is blue and in case 2) $w$ is necessarily red, being adjacent to a green vertex in the same $K_{q,q}$ subgraph. In case 3) the subgraph induced by $u,w$ and $v$ is a $P^{(j)}$ path for some $j\in[q]$, and since $t$ is a satisfying assignment, the three vertices cannot have the same colour. Hence, since $u,v$ are coloured green, then $w$ is coloured red as in case 2). It follows that the set of green vertices is in general position.
		
		The same reasoning applies for pairs of red vertices, and so the colouring is a $\gp $-colouring with three colours.
		This concludes the first part of the proof.

		Conversely, let us suppose that there is a $\gp$-colouring of the graph $G$ with three colours. For a contradiction, let us assume that $y$ and $z$ belong to different colour classes. Consider the graph $G'=(V',E')$ induced by the vertices $V - \{y,z\}$. Two non-adjacent vertices $u,v$ in $V'$ cannot have the same colour as $y$, because $u,y,v$ is a shortest path. Then if there are vertices in $V'$ with the same colour of $y$, they must belong to the same clique of $G'$. But as $G'$ is triangle-free, there are at most two adjacent vertices in $V'$ with the same colour as $y$. The same holds for the vertex $z$. Then there are at least $|V'|- 4$ vertices in $V'$ that must belong to the same colour class, but then they are not in general position since $|V'|\geq 18$, and so a $K^{(i)}$ must be monochromatic for some $i\in[p]$. We conclude that $y,z$ are in the same colour class, say blue, and the other vertices in $G'$ belong to the green and red classes.
		
		We already noted that the vertices $u_{i,j}$ of a $K^{(i)}$ subgraph have the same colour, as well as the vertices $\bar u_{i,j}$, and that these colours are different. Then for each $i\in[p]$ we assign the value $True$ to variable $x_i$ if the vertices $u_{i,j}$ in $K^{(i)}$ are coloured green, whereas we assign the value $False$ to variable $x_i$ if the vertices $u_{i,j}$ in $K^{(i)}$ are coloured red.  
		
		Now consider any clause $c_j=\{\ell^j_1,\ell^j_2,\ell^j_3\}$, $j\in[q]$, and the corresponding subgraph $P^{(j)}$. Since $P^{(j)}$ is a shortest path in $G$, then the three vertices in $P^{(j)}$ cannot have the same colour. As a consequence, the three literals $\ell^j_1$, $\ell^j_2$, and $\ell^j_3$ corresponding to the three vertices in 
		$P^{(j)}$ cannot have the same truth assignment.
		As $c_j$ was a general clause, it follows that all the clauses are satisfied. 
	\end{proof}
	
	Consider now the following problem:
	
	\begin{definition}
		{\sc IGP-Colouring} problem: \\
		{\sc Instance}: A graph $G=(V,E)$, a positive integer $k\leq |V|$. \\
		{\sc Question}: Is there a $\igp$-colouring of $G$ such that $\chi_{\igp }(G)\leq k$?
	\end{definition}
	
	We have the following result:
	
	\begin{theorem}\label{theo:igpc-NP}
		{\sc IGP-Colouring} is NP-complete even for instances $(G,k)$ with \newline $\diam(G)\leq 3$.  
	\end{theorem}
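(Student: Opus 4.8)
The plan is to mirror the reduction used in Theorem~\ref{theo:gpc-NP}, but to recognise that an \igpc-colouring is in particular a proper colouring, so the natural source problem is ordinary graph colouring rather than \sat. First I would recall the corollary (Corollary~\ref{cor:igpdiam3}) that for any graph $H$ with $\diam(H)\in\{2,3\}$ we have $\chi_{\gp_i}(H)=\chi(H)$; hence deciding whether $\chi_{\igp}(H)\le k$ for such $H$ is exactly deciding whether $\chi(H)\le k$. Since \textsc{Graph Colouring} is NP-complete even when restricted to graphs of diameter two (one can always add a universal vertex, or take two non-adjacent dominating vertices, to force diameter two without changing the chromatic number except by a controlled amount), the membership of \igpc\ in NP is immediate (a colouring can be checked in polynomial time: verify properness and, for each colour class, check that no induced path carries three of its vertices, which for diameter-$\le 3$ reduces to checking independence), and the hardness follows by this reduction.

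Concretely, the key steps in order would be: (i) show \igpc\ $\in$ NP --- given $G$, $k$ and a candidate colouring, check in polynomial time that each colour class is an independent general position set; for $\diam(G)\le 3$ this is just checking that each class is independent, by the observation preceding Corollary~\ref{cor:igpdiam3}. (ii) Take an arbitrary instance $(H,k)$ of \textsc{Graph Colouring}; we may assume $k\ge 3$ since $2$-colourability is polynomial. (iii) Build $H'$ from $H$ by adding two new non-adjacent vertices $y,z$, each joined to every vertex of $H$ (the same gadget as in Theorem~\ref{theo:gpc-NP}); then $\diam(H')\le 2$ if $H$ has no isolated-type obstruction, and in any case $\diam(H')\le 3$. (iv) Observe $\chi(H')=\chi(H)+1$: the vertices $y,z$ can share a colour not used on $H$ (they are non-adjacent), and no colour of $H$ can be reused on $y$ or $z$ since they are universal to $V(H)$. (v) Apply Corollary~\ref{cor:igpdiam3} to conclude $\chi_{\igp}(H')=\chi(H')=\chi(H)+1$. (vi) Therefore $(H,k)$ is a yes-instance of \textsc{Graph Colouring} iff $(H',k+1)$ is a yes-instance of \igpc, and the transformation is polynomial, completing the reduction.

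I expect the main obstacle to be a bookkeeping subtlety rather than a conceptual one: namely ensuring the constructed graph genuinely has diameter at most $3$ (and ideally exactly $2$) in all cases, including degenerate instances of \textsc{Graph Colouring} such as those with isolated vertices or disconnected $H$ --- adding the two universal vertices $y,z$ handles this uniformly, but one must double-check that $y,z$ themselves are at distance $2$ (through any vertex of $H$, provided $H$ is nonempty, which we may assume) and that the presence of $P_3$'s of the form $u\, y\, v$ does not spoil the general-position condition within a colour class. Since every colour class is independent and $\diam(H')\le 3$, the argument used for Corollary~\ref{cor:igpdiam3} applies verbatim, so this obstacle is dispatched by that corollary. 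A secondary point to verify is that $k\ge 3$ can indeed be assumed so that the reduction targets the NP-hard regime; this is routine since bipartiteness (i.e. $2$-colourability) is decidable in polynomial time.
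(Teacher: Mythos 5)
Your argument is correct and rests on the same key lemma as the paper, namely Corollary~\ref{cor:igpdiam3}, which converts the $\igp$-colouring question on diameter-$\le 3$ graphs into the ordinary chromatic number question. Where you diverge is in how the hardness of the chromatic number on small-diameter graphs is obtained: the paper simply cites the known NP-completeness results for colouring small-diameter graphs (Mertzios--Spirakis), whereas you give an explicit, self-contained reduction from general {\sc Graph Colouring} by adding two non-adjacent universal vertices $y,z$, which forces $\diam(H')\le 2$ and shifts the chromatic number by exactly one. Your version is more elementary and avoids an external reference, and it even yields hardness for diameter exactly two; the trade-off is that your reduction keeps $k$ as part of the input, while the cited result would additionally give hardness for a fixed small number of colours (as the companion Theorem~\ref{theo:gpc-NP} does for $k=3$). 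Since the statement being proved does not fix $k$, your proof fully establishes it; the only housekeeping points are the ones you already flagged (nonempty $H$ so that $d(y,z)=2$, the trivial bound $k+1\le |V(H')|$, and NP membership by checking independence of each colour class), all of which are routine.
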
 
	\begin{proof}
		By Theorem~\ref{thm:diam three}, we know that  
		$\chi_{\igp }(G) = \chi(G)$ for graphs with diameter at most three. This implies that solving the {\sc IGP-Colouring} problem is NP-complete as a consequence of the results in~\cite{mertzios-2016}.
	\end{proof}

	\section{Concluding remarks}\label{sec:conclusion}
	We conclude with some open problems suggested by our discussion. In Section~\ref{sec:bounds} we investigated the largest sizes of graphs with a given position colouring number. We determined this exactly for $\chi _{\gp _i}$ and $\chi _{\mono _i}$ and found the correct asymptotic order for $\chi _{\gp }$ and $\chi _{\mono }$. It would be of interest to find exact expressions for the latter colouring numbers and investigate the analogous question for $\chi _{\mu }$.
	
	\begin{problem}
		What is the largest size of a graph with a given value of $\chi _{\pi }(G)$, when $\pi (G)$ is any one of $\gp (G)$, $\mono (G)$, $\mu (G)$ or $\mu _i(G)$?
	\end{problem}
	
	In some cases, our optimal position colourings were obtained by partitioning the vertex set into maximal position sets, or by `nearly tessellating' the vertex set with such sets. This suggests the following packing problem. 
	\begin{problem}
		For a given graph $G$, what is the largest number of vertex-disjoint maximal position sets contained in $G$? For which graphs $G$ can $V(G)$ be partitioned into maximal position sets?
	\end{problem}
	
	Finally we suggest investigating Nordhaus-Gaddum relations for position colourings. It follows from Lemma~\ref{lem:chi inequality} together with the result of~\cite{NordGad} that $\chi _{\pi _i}(G)\chi _{\pi _i}(\overline{G}) \geq \chi (G)\chi (\overline{G}) \geq n$ and $\chi _{\pi _i}(G) +\chi _{\pi _i}(\overline {G}) \geq \chi(G)+\chi(\overline{G}) \geq 2\sqrt{n}$ for any graph $G$ whenever $\pi (G)$ is $\gp (G)$ or $\mono (G)$. Moreover, both of these bounds are tight, as evidenced respectively by a complete graph and the complete $r$-partite graph with every part of order $r$. By Lemma~\ref{lem:clique cover bound} we have $\chi _{\pi }(G)+\chi _{\pi }(\overline{G}) \leq \theta (G)+\theta (\overline{G}) \leq n+1$; up to order nine the only example of equality is $C_5$ for $\chi _{\mono }$~\cite{erskine}. We conjecture that this can be strengthened as follows.
	
	\begin{conjecture}\label{conj:NGrelation}
		For any graph $G$, if $\pi (G)$ is $\gp (G)$ or $\mono (G)$, then \[ \chi _{\pi _i}(G) +\chi _{\pi _i}(\overline{G}) \leq n+1 \text{ and } \chi _{\pi _i}(G)\chi _{\pi _i}(\overline {G}) \leq \left ( \frac{n+1}{2} \right )^2.\] 
	\end{conjecture}
	If Conjecture~\ref{conj:NGrelation} is true, then it is tight (the bounds are met respectively by $K_n$ and $K_{r+1}$ with $r$ leaves attached to a vertex). It is true for all graphs with up to ten vertices, as shown by a computer search by Erskine~\cite{erskine} (the search uses the geng program in nauty, computing all maximum $\gp $- and $\mono $-sets in the graph and then for $k = 1,2,3,\dots $ taking the union of all possible combinations of $k$ $\gp $- or $\mono $-sets until finding one that covers the whole vertex set).
	
	\section*{Acknowledgements}
	
	James Tuite gratefully acknowledges funding support from London Mathematical Society grant [ECF-2021-27] and EPSRC grant [EP/W522338/1]. Haritha S acknowledges University of Kerala for providing JRF. Gabriele Di Stefano has been supported in part by the Italian National Group for Scientific Computation (GNCS-INdAM). The authors thank Sandi Klav\v{z}ar, Grahame Erskine and Terry Griggs for valuable discussion of this material, as well as the three anonymous reviewers for their helpful comments.
	

\end{document}